\def\CR{\color{red} }
\def\sD{{\mathfrak D}}      
   \def\sH{{\mathfrak H}}   
   \def\sK{{\mathfrak K}}
\def\st{{\mathfrak t}}
\def\sh{{\mathfrak h}}
\def\ss{{\mathfrak s}}
\def\sr{{\mathfrak r}}
\def\su{{\mathfrak u}}
\def\sb{{\mathfrak b}}
\def\su{{\mathfrak u}}
      \def\dC{{\mathbb C}}
   \def\dN{{\mathbb N}}   
      \def\dR{{\mathbb R}}
\def\bB{{\mathbf B}}
\def\bL{{\mathbf L}}
\def\bF{{\mathbf F}}
\def\wt#1{{{\widetilde #1} }}
\def\bm\chi{\mbox{\boldmath$\chi$}}
\def\half{{\frac{1}{2}}}
\def\col{{\rm col\,}}
\def\ker{{\rm ker\,}}
\def\ran{{\rm ran\,}}
\def\cran{{\rm \overline{ran}\,}}
\def\dom{{\rm dom\,}}
\def\mul{{\rm mul\,}}
\def\cdom{{\rm \overline{dom}\,}}
\def\clos{{\rm clos\,}}
\let\xker=\ker \def\ker{{\xker\,}}
\def\uphar{{\upharpoonright\,}}
\newtheorem{theorem}{Theorem}[section]
\newtheorem{proposition}[theorem]{Proposition}
\newtheorem{corollary}[theorem]{Corollary}
\newtheorem{lemma}[theorem]{Lemma}
\newtheorem{definition}[theorem]{Definition}
\theoremstyle{definition}
\newtheorem{remark}[theorem]{Remark}
\numberwithin{equation}{section}
\begin{document}
{\CR
\title[Representing maps for semibounded forms]
{Representing maps for semibounded forms \\
and their Lebesgue type decompositions}
}

\author[S.~Hassi]{S.~Hassi}
\author[H.S.V.~de~Snoo]{H.S.V.~de~Snoo}

\address{Department of Mathematics and Statistics \\
University of Vaasa \\
P.O. Box 700, 65101 Vaasa \\
Finland} \email{sha@uwasa.fi}

\address{Bernoulli Institute for Mathematics, Computer Science and Artificial Intelligence \\
University of Groningen \\
P.O. Box 407, 9700 AK Groningen \\
Nederland}
\email{h.s.v.de.snoo@rug.nl}

\date{}
\thanks{}



\keywords{Semibounded form, closability, singularity, Lebesgue type decompositions of forms,
representing map, representation theorem} 
\subjclass{47A07, 47A63, 47B02, 47B25}

\begin{abstract}
For a semibounded sesquilinear form $\st$ in a Hilbert space $\sH$
there exists a representing map $Q$ from $\sH$ to another Hilbert space $\sK$,
such that $\st[\varphi, \psi]-c(\varphi, \psi)\!=\!(Q\varphi,Q\psi)$,
$\varphi,\psi \in \dom \st$, with $c \in \dR$ a lower bound of $\st$.
Representing maps offer a simplifying tool to study general semibounded forms.
By means of representing maps closedness, closability, and singularity of $\st$
are immediately translated into the corresponding properties of the operator $Q$, and vice versa.
Also properties of sum decompositions $\st=\st_1+\st_2$ of
a nonnegative form $\st$ with two other nonnegative forms $\st_1$ and $\st_2$ in $\sH$
can be analyzed by means of associated nonnegative contractions $K\in\bB(\sK)$.
This helps, for instance, to establish an explicit operator theoretic characterization
for the summands $\st_1$ and $\st_2$ to be, or not to be, mutually singular.
Such sum decompositions are used to study characteristic properties of the so-called Lebesgue type
decompositions of semibounded forms $\st$, where $\st_1$ is closable and $\st_2$ singular;
in particular, this includes the Lebesgue decomposition of a semibounded form due to B. Simon.
Furthermore, for a semibounded form $\st$ with its representing map $Q$
it will be shown that the corresponding semibounded selfadjoint relation $Q^*Q^{**} +c$
is uniquely determined by a limit version of the classical representation theorem for the form $\st$,
being studied by W. Arendt and T. ter Elst in a sectorial context.
Via representing maps a full treatment is given of the convergence of monotone
sequences of semibounded forms.
\end{abstract}

\maketitle

\section{Introduction}

Let $\st \in \bF(\sH)$, the sesquilinear forms in the Hilbert space $\sH$,
and assume that $\st$ is semibounded.
Then with $c \in \dR$ a lower bound of $\st$ there exists a representing map for $\st-c$,
i.e., a linear operator $Q \in \bL(\sH, \sK)$,
such that
\begin{equation}\label{uksi}
 \st[\varphi, \psi]-c(\varphi, \psi)
 =(Q\varphi, Q \psi), \quad \varphi, \psi \in \dom \st=\dom Q.
\end{equation}
Here $Q \in \bL(\sH, \sK)$ stands for the space of all linear relations from $\sH$ to a Hilbert space $\sK$.
Without loss of generality it may be assumed that
the representation in \eqref{uksi} is minimal in the sense that $\ran Q$ is dense in $\sK$.
With such a representing map several notions for linear operators
are connected to similar notions for semibounded forms:
think of closability and singularity.
For instance, a semibounded form $\st$ is closable
if and only if the representing map $Q$ is closable and, in this case,
the closure $\bar{\st}$ of the form $\st$  is given by
\[
 \bar{\st}[\varphi, \psi]-c(\varphi, \psi)
 =(Q^{**}\varphi, Q^{**} \psi), \quad \varphi, \psi \in \dom \st=\dom Q,
\]
where the operator $Q^{**}$ is the closure of $Q$.
Likewise, a nonnegative form $\st$ is  singular
if and only if the representing map $Q$ is singular.
A short overview of representing maps is given in Section \ref{sec2}.
Recall that representing maps have
Lebesgue type decompositions based on
a decomposition of Hilbert spaces going back
to de Branges and Rovnyak;
general linear relations were decomposed along
these lines  in \cite{HS2023a}.
In the present paper the representing maps will be used
to decompose semibounded
forms in semibounded closable and  nonnegative singular forms
and to offer a new interpretation of the
classical representation theorem for forms.
Furthermore, this interpretation will play
a role when considering the convergence of monotone sequences of forms.
The contents of the present paper will now be described.

\medskip

The first topic is the Lebesgue decomposition of a semibounded form
$\st \in \bF(\sH)$.  In general a semibounded form $\st \in \bF(\sH)$ need not be closed
or closable, so that the interest is in decomposing $\st$ into a sum where the summands
are closable and singular, respectively. In particular, there exist forms
 $\st_{\rm reg}, \st_{\rm sing} \in \bF(\sH)$ such that
 \begin{equation}\label{leeb}
\st=\st_{\rm reg}+\st_{\rm sing}, \quad  \dom \st=\dom \st_{\rm reg}=\dom \st_{\rm sing},
\end{equation}
where $\st_{\rm reg}$ is semibounded and closable
and $\st_{\rm sing}$ is nonnegative and singular.
Moreover,  this decomposition is uniquely determined since
the component $\st_{\rm reg}$ is the maximal closable part of $\st$
and $\st_{\rm sing}=\st-\st_{\rm reg}$.
This so-called Lebesgue decomposition goes back to Simon \cite{S3}.
Here the decomposition will be shown by means of the Lebesgue decomposition
of the representing map $Q$ of $\st$ in \eqref{uksi}. The regular part $\st_{\rm reg}$
of a form $\st\in \bF(\sH)$ will appear when discussing the representation
of general forms by means of nonnegative selfadjoint relations (see Section \ref{repr})
or describing the limits of monotone sequences of forms (see Section \ref{mono}).

\medskip

The second topic is a description of all Lebesgue type decompositions
of a semibounded form $\st \in \bF(\sH)$ as
\begin{equation}\label{kolmen}
 \st=\st_1+\st_2, \quad \dom \st=\dom \st_1=\dom \st_2,
\end{equation}
where  the form $\st_1 \in \bF(\sH)$ is semibounded and closable
while the form $\st_2 \in \bF(\sH)$ is nonnegative and singular.
In \cite{Kosh} there is a treatment of singular forms with applications.
Note that the Lebesgue decomposition in \eqref{leeb} is an example
of a Lebesgue type decomposition.

The first step to study form sums \eqref{kolmen} is taken in Section \ref{sec3}, where the interest is
in sum decompositions \eqref{kolmen} of a nonnegative form $\st$
with nonnegative forms $\st_1$ and $\st_2$.
The sum decompositions are characterized by means of nonnegative contractions $K \in \bB(\sK)$,
where $\sK$ is the Hilbert space associated with the representing map $Q$ of $\st$.
The main results for the parametrization can be found in Theorem \ref{Lebtype},
while the question of minimality of representation for the sum $\st_1+\st_2$,
assuming minimality of representations of the summands $\st_1$ and $\st_2$,
is dealt with in Theorem \ref{minim}.
It appears that the minimality of the parametric representation for the sum
$\st_1+\st_2$ is intimately connected with the interaction between the components $\st_1$ and $\st_2$.
Following the measure theoretic analog for the mutual singularity of pairs of nonnegative measures,
the parallel sum $\st_1:\st_2$ of nonnegative forms $\st_1,\st_2$ is treated
and described by means of the contraction $K$ in Proposition \ref{repparsum}.
This leads to a classification of form sums $t_1+t_2$ in two main categories:
those for which $\st_1:\st_2=0$, the case of mutual singularity,
and the opposite case $\st_1:\st_2\neq 0$, the case where the components
$\st_1$ and $\st_2$ interact with each other.
The Lebesgue decomposition of $\st$ is of the first category,
cf. \cite[Proposition 2.10]{HSS2009}, and correspond to the case where $K$ is an orthogonal projection.
The other case, where $K$ is not an orthogonal projection means non-minimal
representation for the sum $\st_1+\st_2$.
A deeper operator theoretic treatment to study these phenomena is presented in \cite{HS2023a}.

The second step to study form sums \eqref{kolmen} can be found in Section \ref{sec32}, where
the nonnegative contractions $K \in \bB(\sK)$ are characterized
for which in \eqref{kolmen} the form  $\st_1$ is closable and the form $\st_2$ is singular.
In fact, the general case where $\st$ is semibounded will be presented,
using the translation invariance of closable forms, see Theorem \ref{Lebtypep}.
Moreover, in Theorem \ref{uniq}  the uniqueness of the Lebesgue type
decomposition is characterized.
Lebesgue type decompositions for a pair of nonnegative forms were introduced
in \cite{HSS2009}; in the present paper the emphasis is on single forms.

\medskip

The third topic is the representation of semibounded forms
via semibounded selfadjoint relations.
This is treated in Section \ref{repr}.
Let $\st \in \bF(\sH)$ be a semibounded form
with the representation \eqref{uksi}.
If $\st$ is closed then the well-known representation theorem
asserts that there is a semibounded selfadjoint relation $H$ in $\sH$
with $\dom H \subset \dom \st$ and having the same lower bound as $\st$,
such that for all elements $\{\varphi, \varphi'\} \in H$,
\[
\st[\varphi, \psi]=(  \varphi',  \psi)
\quad \mbox{for all} \quad  \psi \in \dom \st.
\]
In Theorem \ref{thm1} it is shown in the general case, when $\st$
is not necessarily closed, that there exists a unique semibounded selfadjoint relation $A$,
namely $A=Q^*Q^{**}+c$, such that $\dom A \subset \dom \st$ and
for each element $\{\varphi, \varphi'\} \in A$
there exists a sequence $\varphi_n \in \dom \st$ for which
\begin{equation}\label{uksi0}
\varphi_n \to_\st \varphi  \quad \mbox{and} \quad
\st[\varphi_n,\psi] \to (\varphi',\psi) \quad \mbox{for all} \quad \psi\in\dom \st,
\end{equation}
where $\varphi_n \to_\st \varphi$ means
$\varphi_n \to \varphi$ in $\sH$ and $\st[\varphi_n-\varphi_m] \to 0$.
The connection with the representation for the form $\st_{\rm reg}$ can be
found in Theorem \ref{terelst}.
The approach via representing maps gives a simple proof of all these facts.
The representation \eqref{uksi0} for a general semibounded form $\st$
coincides with the one of Arendt and ter Elst \cite{AtE1}.

\medskip

The fourth and last topic is the convergence
of monotone sequences of semibounded forms.
The approach via representing maps will be used  in Section \ref{mono}
to treat the convergence of such sequences
along the lines of Section \ref{repr}.
In the background of this section is the treatment
of the convergence of nondecreasing sequences
of linear operators or relations; cf.  \cite{HS2023seq1}.

\medskip

The point of view in the present paper is to obtain results
for a single semibounded form, as described above,
from the general considerations in \cite{HSS2018}
for linear operators and relations, applied via the representing map of the form.
The topic of Lebesgue type decompositions
has a long history; here it is only mentioned that
the first results for Lebesgue decompositions of forms go back to Simon \cite{S3},
while the related case of Lebesgue type decompositions for
a pair of nonnegative bounded operators was treated
a little earlier by Ando \cite{An}; for more references, see \cite{HSS2018}.
For the representation of semibounded forms by means of selfadjoint operators or relations,
see \cite{Kato}, \cite{Kosh},  \cite{BHS},  \cite{AtE1}.
Closely associated results for the convergence of monotone sequences of forms
can be found in  \cite{Kato},  \cite{RS1}, \cite{S3}.

\medskip

The treatment of semibounded forms in terms of representing maps can also be used
for an interpretation of the work of Sebesty\'en and Stochel \cite{SS} concerning Friedrichs
and Kre\u{\i}n type extensions of a semibounded relation.
An extension of the present Lebesgue type decompositions for semibounded forms
can be given for the case of a pair of nonnegative bounded operators or forms;
this includes Lebesgue type decompositions for the context of \cite{An}.
Furthermore, it should be mentioned that many results in the present paper
remain valid in the context of sectorial forms and sectorial operators or relations.

\section{Representing maps for semibounded sesquilinear forms}\label{sec2}

This section is a summary of the facts concerning semibounded sesquilinear forms in   
a Hilbert space. There is a special emphasis on the notion of a representing map for
a semibounded sesquilinear form.

\medskip

 A sesquilinear form $\st=\st[\cdot, \cdot]$ in a Hilbert space $\sH$ over $\dC$
with an inner product $(\cdot, \cdot)$ is a mapping from  $\sD \times \sD$ to $\dC$,
where $\sD=\dom \st$  is a linear subspace of $\sH$,
which is linear in the first entry and antilinear in the second entry.
In what follows, sesquilinear forms over $\dC$ will be often called just forms,
and the class of forms
in a Hilbert space $\sH$ will be denoted  by $\bF(\sH)$.
Also the diagonal $\st[\cdot]$ will be used:
$\st[\varphi]=\st[\varphi, \varphi]$, $\varphi \in \dom \st$.
The form $\st$ is called symmetric
if $\st[\varphi, \psi]=\overline{\st[\psi, \varphi]}$ for all $\varphi, \psi \in \sD$.
Equivalently, $\st$ is symmetric if and only if $\st[\varphi]\in\dR$ for all $\varphi\in \sD$.
Let $\st_1, \st_2 \in \bF(\sH)$ be forms
 with domains $\dom \st_{1}$ and $\dom \st_{2}$.
Then the sum $\st_{1}+\st_{2}$, defined by
\[
 (\st_{1}+\st_{2})[\varphi, \psi]=\st_{1}[\varphi, \psi]+\st_{2}[\varphi, \psi],
 \quad \varphi, \psi \in \dom \st_{1} \cap \dom \st_{2},
\]
belongs to $\bF(\sH)$.
In particular, $\st[\varphi, \psi]+c (\varphi, \psi)$,
with $\varphi, \psi \in \dom \st$ and $c \in \dR$, defines a form, denoted by $\st+c$.
The inequality $\st_{1} \leq  \st_{2}$ for forms $\st_{1}, \st_{2} \in \bF(\sH)$ is defined by
\begin{equation}\label{ineq0}
\dom \st_{2} \subset  \dom \st_{1},  \quad
\st_{1}[\varphi] \leq \st_{2}[\varphi], \quad \varphi \in  \dom \st_{2}.
\end{equation}
 The inclusion $\st_2 \subset \st_1$ means that $\dom \st_2 \subset \dom \st_1$
and $\st_1$ agrees with $\st_2$ on $\dom \st_2$.
In this case $\st_2$ is called a restriction of $\st_1$,
while $\st_1$ is called an extension of $\st_2$.
In particular, $\st_{2} \subset \st_{1}$ implies $\st_{1} \leq \st_{2}$.
All these elementary facts can be found in \cite{BHS, Kato, Kosh}.

\medskip

A form $\st \in \bF(\sH)$ with domain $\dom \st$ is called bounded if
\[
 |\st[\varphi]|  \leq a \|\varphi\|^{2}, \quad \varphi \in \dom \st,
\]
for some $a \geq 0$. The form $\st \in \bF(\sH)$
is called bounded from below by $c\in \dR$ if
\begin{equation}\label{lb}
 \st[\varphi] \geq c \|\varphi\|^{2}, \quad \varphi \in \dom \st,
\end{equation}
and nonnegative if $\st[\varphi] \geq 0$, $\varphi \in \dom \st$.
A semibounded form is automatically symmetric.
The sum of forms which are semibounded from below
is also semibounded from below.
The lower bound $m(\st) \in \dR$ of $\st$
is the greatest of all numbers $c$ in \eqref{lb}:
\[
  m(\st)= \inf \left\{ \,\frac{\st[\varphi, \varphi]}{(\varphi, \varphi)} :
  \, \varphi \in \dom \st \, \right\}.
\]
For $c \leq m(\st)$  define the form $\st-c$ as follows
\[
 (\st-c)[\varphi, \psi]=\st[\varphi, \psi]- c(\varphi, \psi)_{\sH},
 \quad \varphi, \psi \in \dom \st.
\]
Then $\st-c \in \bF(\sH)$ is a nonnegative form
with $\dom (\st-c)=\dom \st$.
The kernel $\ker (\st-c)$ of $\st-c$ is defined by
\[
 \ker (\st-c)=\{ \varphi \in \dom \st :\, (\st-c)[\varphi, \varphi]=0\}.
\]
By Cauchy's inequality one sees that $\ker (\st-c)$
is a linear subspace of $\dom \st$.

\medskip

A basic tool in the present paper is the following concept
of a representing map for forms;
for individual forms see \cite{PW1}, \cite{Szy87},
and for a pair of nonnegative forms see also \cite[(4.6)]{HSS2009}.

\medskip

\begin{definition}
Let $\st \in \bF(\sH)$ be a semibounded form
and let $c \leq m(\st)$.
A representing map for the nonnegative form $\st-c$ is a linear operator
$Q \in \bL(\sH, \sK)$, where $\sK$ is a Hilbert space, with $\dom Q=\dom \st$,
such that
\begin{equation}\label{grepq0}
 (\st-c)[\varphi, \psi]=(Q \varphi, Q \psi)_{\sK},
 \quad \varphi, \psi \in \dom Q.
\end{equation}
\end{definition}

In what follows, it is important to observe that
for every semibounded form there exists a representing map.
For a proof of this result, see \cite[Lemma 4.1]{HS2023seq1}.

\begin{lemma}\label{exiuni}
Let $\st \in \bF(\sH)$ be a semibounded form.
Then for each $c \leq m(\st)$ there exists a
representing map $Q \in \bL(\sH,\sK)$ for the nonnegative form $\st-c$.
Moreover, this representing map is uniquely defined in the following sense:
if $Q' \in \bL(\sH, \sK')$ is an another representing map for $\st-c$
with $\dom Q'=\dom \st$,
then there exists a partial isometry $V \in \bB(\sK, \sK')$
with initial space $\cran Q$ and final space $\cran Q'$,
such that $Q'=VQ$.
\end{lemma}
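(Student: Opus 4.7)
The plan is to prove existence by a standard Kolmogorov/GNS style construction applied to the nonnegative form $\st-c$, and then to prove uniqueness by showing that the defining identity \eqref{grepq0} forces any two representing maps to be intertwined by a partial isometry.

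For existence, note that $s := \st-c$ is a nonnegative sesquilinear form on $\dom\st$, so the Cauchy--Schwarz inequality holds and $\cN := \ker s$ is a linear subspace of $\dom\st$ (this is already observed in the paper just before the definition). On the quotient $\dom\st/\cN$ the form $s$ descends to a positive definite inner product $([\varphi],[\psi])_0 := s[\varphi,\psi]$; well-definedness on equivalence classes is a direct consequence of Cauchy--Schwarz applied to $s$. Let $\sK$ be the Hilbert space completion of $(\dom\st/\cN,(\cdot,\cdot)_0)$, and define $Q : \dom\st \to \sK$ by $Q\varphi := [\varphi]$. Linearity is clear, and by construction $(Q\varphi,Q\psi)_\sK = s[\varphi,\psi]$, which is exactly \eqref{grepq0}. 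Incidentally $\ran Q$ is dense in $\sK$, so this particular $Q$ is even minimal; minimality is however not asked for in the statement.

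For uniqueness, let $Q' \in \bL(\sH,\sK')$ be a second representing map for $\st-c$ with $\dom Q' = \dom\st$. From \eqref{grepq0} applied to both $Q$ and $Q'$ one gets, for every $\varphi,\psi \in \dom\st$,
\[
 (Q\varphi, Q\psi)_\sK \;=\; (\st-c)[\varphi,\psi] \;=\; (Q'\varphi, Q'\psi)_{\sK'}.
\]
Define $V_0 : \ran Q \to \ran Q'$ by $V_0(Q\varphi) := Q'\varphi$. It is well-defined because $Q\varphi_1 = Q\varphi_2$ implies $\|Q(\varphi_1-\varphi_2)\|_\sK = 0$, hence $\|Q'(\varphi_1-\varphi_2)\|_{\sK'} = 0$ by the displayed identity, so $Q'\varphi_1 = Q'\varphi_2$; it is linear, and isometric, again by the identity. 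Extend $V_0$ by continuity to an isometry $\wt V$ from $\cran Q$ onto $\cran Q'$, and then to an operator $V \in \bB(\sK,\sK')$ by declaring $V = 0$ on $(\cran Q)^\perp$. Then $V$ is a partial isometry with initial space $\cran Q$ and final space $\cran Q'$, and $V Q\varphi = V_0(Q\varphi) = Q'\varphi$ for all $\varphi \in \dom\st$, i.e.\ $Q' = VQ$, as required.

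The construction is almost entirely routine, so there is no deep obstacle. The only points requiring a small amount of care are (i) the verification, via Cauchy--Schwarz, that the inner product on $\dom\st/\ker s$ is well-defined and positive definite so that its completion $\sK$ genuinely is a Hilbert space, and (ii) the bookkeeping of the initial and final spaces of $V$, which need to land exactly on $\cran Q$ and $\cran Q'$ in order to match the statement of the lemma.
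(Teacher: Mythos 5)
Your proof is correct and follows the standard GNS/quotient-completion construction for $\st-c$ together with the usual intertwining partial-isometry argument for uniqueness. The paper itself defers the proof of this lemma to \cite[Lemma~4.1]{HS2023seq1}, which uses essentially the same construction, so there is no substantive difference in approach.
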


\begin{definition}
The representing map $Q \in \bL(\sH, \sK)$
in Lemma {\rm \ref{exiuni}} is called minimal when $\cran Q=\sK$.
\end{definition}

If $Q$ is a representing map for $\st-c$ from $\sH$ to $\sK$,
then by replacing $\sK$ by $\cran Q$ one gets
a representing map for $\st-c$ whose range is dense.
When the representing maps $Q$ and $Q'$ for $\st-c$
both are taken to be operators with dense ranges,
the partial isometry in Lemma \ref{exiuni} becomes a unitary operator.

\medskip

At this point it should be stressed that representing maps $Q$
in \eqref{grepq0} depend on the choice of the bound $c \leq m(\st)$.
By indicating this in the notation $Q_c$ one has, in particular,
\[
 \ker Q_c=\ker (\st-c).
\]
Thus, for instance, $\ker Q_c=\{0\}$ for all $c<m(\st)$,
while in general $\ker Q_c \neq \{0\}$ when $c=m(\st)$.
However, observe that, by definition,
\[
 \dom Q_c=\dom Q_{m(\st)}=\dom \st, \quad c\leq m(\st).
\]
Furthermore, it follows from \eqref{grepq0} that
\begin{equation}\label{brandnew}
\inf \left\{ \,\frac{\|Q_c \varphi\|^2 }{ \|\varphi\|^2} :\,
\varphi \in \dom \st \, \right\}=m(\st)-c.
\end{equation}
Of course, one could always take $c=m(\st)$,
but the more general choice $c \leq m(\st)$ allows
some useful flexibility.

\medskip

The following notions and definitions are quite standard
for a semibounded form $\st \in \bF(\sH)$; see, e.g., \cite{Kato}, \cite{Kosh}.
A sequence $\varphi_n$ in $\dom \st$ is said
to converge to $\varphi \in \sH$ in the sense of $\st$, denoted by
$\varphi_n \to_{\st} \varphi$, if
\[
 \varphi_n \to \varphi \quad \mbox{and} \quad
 \st[\varphi_n-\varphi_m] \to 0,
\]
where the  convergence $\varphi_n \to \varphi$
takes place in $\sH$.

\begin{definition}
Let $\st \in \bF(\sH)$ be a semibounded form.
Then $\st$  is said to be
\begin{enumerate} \def\labelenumi{\rm(\alph{enumi})}
\item  \textit{ closable} if for any sequence $\varphi_n \in \dom \st$:
\[
\hspace{1cm} \varphi_n \to_\st 0 \quad
\Rightarrow \quad \st[\varphi_n] \to 0.
\]
\item   \textit{ closed} if for any sequence $\varphi_n \in \dom \st$:
\[
\hspace{1cm} \varphi_n \to_\st \varphi \quad
\Rightarrow \quad \varphi \in \dom \st \quad \mbox{and}
\quad \st[\varphi_n-\varphi] \to 0.
\]
 \end{enumerate}
\end{definition}

Closability and closedness of a semibounded form $\st$
are \textit{shift invariant}, i.e.,
for all $a \in \dR$ the forms $\st$ and $\st-a$ are
simultaneously closable or closed, respectively.
The connection between these notions for forms and for the corresponding
linear representing maps is now easily established;
cf. \cite[Ch. VI, Examples 1.13, 1.23]{Kato}.

\begin{lemma}\label{repform0}
Let $\st \in \bF(\sH)$ be a semibounded form
and let $Q_c \in \bL(\sH, \sK)$ be a representing map for $\st$
as in \eqref{grepq0} with $c \leq m(\st)$.
Then the following statements hold:
\begin{enumerate} \def\labelenumi{\rm(\alph{enumi})}
\item $\st$ is  closable if and only if $Q_c$ is closable;
\item $\st$ is  closed if and only if $Q_c$ is closed.
 \end{enumerate}
\end{lemma}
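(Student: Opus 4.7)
The whole argument will rest on translating the form-norm $\st[\varphi]$ and the form-convergence $\varphi_n\to_\st\varphi$ into statements about $Q_c$. From the representing property \eqref{grepq0} one has
\[
 \st[\varphi] = \|Q_c\varphi\|^2 + c\|\varphi\|^2, \qquad \varphi\in\dom\st,
\]
and more generally
\[
 \st[\varphi_n-\varphi_m] = \|Q_c(\varphi_n-\varphi_m)\|^2 + c\,\|\varphi_n-\varphi_m\|^2.
\]
Since $\varphi_n\to\varphi$ in $\sH$ forces the second term to vanish, the condition $\st[\varphi_n-\varphi_m]\to 0$ is equivalent to $(Q_c\varphi_n)$ being Cauchy in $\sK$. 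Thus the key reformulation is
\[
 \varphi_n\to_\st\varphi \iff \varphi_n\to\varphi \text{ in }\sH \text{ and } (Q_c\varphi_n) \text{ is Cauchy in }\sK.
\]
Once this equivalence is in hand, both (a) and (b) are short.

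\textbf{For (a).} Assuming $\st$ is closable, take any sequence $\varphi_n\in\dom Q_c$ with $\varphi_n\to 0$ in $\sH$ and $Q_c\varphi_n\to\eta$ in $\sK$. The Cauchy property of $(Q_c\varphi_n)$ gives $\varphi_n\to_\st 0$, hence $\st[\varphi_n]\to 0$; using the displayed identity and $\varphi_n\to 0$ in $\sH$ I conclude $\|Q_c\varphi_n\|\to 0$, so $\eta=0$ and $Q_c$ is closable. Conversely, if $Q_c$ is closable and $\varphi_n\to_\st 0$, the reformulation shows $(Q_c\varphi_n)$ is Cauchy, hence convergent to some $\eta\in\sK$; since $\varphi_n\to 0$ in $\sH$, closability of $Q_c$ forces $\eta=0$, which together with the identity for $\st[\varphi_n]$ yields $\st[\varphi_n]\to 0$.

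\textbf{For (b).} Assume $\st$ is closed and take $\varphi_n\in\dom Q_c$ with $\varphi_n\to\varphi$ in $\sH$ and $Q_c\varphi_n\to\eta$. The same reformulation gives $\varphi_n\to_\st\varphi$, so $\varphi\in\dom\st=\dom Q_c$ and $\st[\varphi_n-\varphi]\to 0$, which by the identity applied to $\varphi_n-\varphi$ gives $Q_c\varphi_n\to Q_c\varphi$, i.e.\ $\eta=Q_c\varphi$. Hence $Q_c$ is closed. Conversely, if $Q_c$ is closed and $\varphi_n\to_\st\varphi$, then $(Q_c\varphi_n)$ is Cauchy and converges to some $\eta\in\sK$; closedness of $Q_c$ gives $\varphi\in\dom Q_c=\dom\st$ and $Q_c\varphi=\eta$, whence $\st[\varphi_n-\varphi]=\|Q_c(\varphi_n-\varphi)\|^2+c\|\varphi_n-\varphi\|^2\to 0$.

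\textbf{Main obstacle.} There is no substantive obstacle here; every implication reduces to the single algebraic identity for $\st[\varphi_n-\varphi_m]$. The only point requiring care is the sign of $c$: when $c<0$ the term $c\|\varphi_n-\varphi_m\|^2$ is negative, so one should not drop it by monotonicity but rather use $\|\varphi_n-\varphi_m\|\to 0$ to control it in absolute value. Once this is noted, the proof is a clean two-page translation.
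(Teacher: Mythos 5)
Your proof is correct and uses exactly the paper's approach: the paper's one-line proof rests on the same observation that $\varphi_n\to_\st\varphi$ is equivalent to $\varphi_n\to\varphi$ in $\sH$ together with $(Q_c\varphi_n)$ converging (equivalently, being Cauchy) in $\sK$, which you derive from the identity $\st[\varphi_n-\varphi_m]=\|Q_c(\varphi_n-\varphi_m)\|^2+c\|\varphi_n-\varphi_m\|^2$. You simply spell out the four implications that the paper leaves implicit, including the correct handling of the sign of $c$.
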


\begin{proof}
It suffices to observe that
$\varphi_n \to_\st \varphi$  is equivalent to
$\varphi_n \to \varphi$
while $Q_c \varphi_n$   converges in $\sK_c$.
\end{proof}

Assume that the semibounded form $\st \in \bF(\sH)$
has a semibounded form extension $\st' \in \bF(\sH)$, i.e., $\st \subset \st'$.
Let $Q_c$ and $Q_c'$ be the representing maps for $\st$
and $\st'$ for $c \leq m(\st)$.
If $\st'$ is closed, then $Q_c'$ is closed by Lemma \ref{repform0},
which implies that $Q_c$ is closable;
thus $\st$ is closable by Lemma \ref{repform0}.
Therefore, if the form $\st$ has a closed extension, then $\st$ is closable.
A converse to this statement is contained in the following lemma.

\begin{lemma}\label{closure}
Let $\st \in \bF(\sH)$ be a semibounded form.
 If $\st$ is closable, then $\st$ has a closure
$\bar \st \in \bF(\sH)$ {\rm (}the smallest closed  extension{\rm )} defined as follows:
the domain $\dom \bar \st$ is given by all $\varphi \in \sH$
for which there exists a sequence
$\varphi_n \in \dom \st$ with  $\varphi_n \to_\st \varphi$, and in this case
\begin{equation}\label{rep1a}
\bar \st[\varphi, \psi]=\lim_{n \to \infty} \st[ \varphi_n, \psi_n]
\quad \mbox{for any} \quad
\varphi_n \to_\st \varphi, \quad \psi_n \to_\st \psi.
\end{equation}
The forms $\st$ and $\bar{\st}$ have the same lower bound.

In fact, if  $c \leq m(\st)$ and $Q_c \in \bL(\sH,\sK)$ is a closable representing map
for $\st-c$:
\[
  \st[\varphi, \psi]-c(\varphi, \psi)= (Q_c \varphi, Q_c  \psi)_\sK,
 \quad \varphi, \psi \in \dom \st =\dom Q_c,
\]
 then $\bar \st$ is given by
\begin{equation}\label{rep1}
 \bar \st[\varphi, \psi] -c(\varphi, \psi)
 = (Q_c^{**} \varphi, Q_c^{**} \psi)_\sK,
 \quad \varphi, \psi \in \dom \bar \st =\dom Q_c^{**},
\end{equation}
where $Q_c^{**}$ denotes the closure of $Q_c$.
\end{lemma}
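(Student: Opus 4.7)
The plan is to reduce everything to the representing-map picture via Lemma \ref{repform0}. Since $\st$ is closable, Lemma \ref{repform0}(a) gives that $Q_c$ is closable, so its closure $Q_c^{**}$ exists as a closed operator from $\sH$ to $\sK$. Define a form $\bar\st \in \bF(\sH)$ by $\dom \bar\st := \dom Q_c^{**}$ together with the right-hand side of \eqref{rep1}. By Lemma \ref{repform0}(b) the form $\bar\st$ is closed, and by construction $\st \subset \bar\st$, since $Q_c \subset Q_c^{**}$.

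The heart of the argument is to identify $\dom Q_c^{**}$ with the set $\cD$ of those $\varphi \in \sH$ admitting a sequence $\varphi_n \in \dom \st$ with $\varphi_n \to_\st \varphi$. From \eqref{grepq0},
\[
 \st[\varphi_n-\varphi_m] = \|Q_c(\varphi_n-\varphi_m)\|_\sK^2 + c\|\varphi_n-\varphi_m\|^2.
\]
If $\varphi_n \to \varphi$ in $\sH$, the last term tends to $0$, so $\st[\varphi_n-\varphi_m]\to 0$ is equivalent to $(Q_c\varphi_n)$ being Cauchy in $\sK$. Combined with the uniqueness of the limit coming from the closability of $Q_c$, this shows that $\varphi_n \to_\st \varphi$ exactly when $\varphi \in \dom Q_c^{**}$ with $Q_c^{**}\varphi = \lim Q_c\varphi_n$. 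Therefore $\dom \bar\st = \cD$, and the limit formula \eqref{rep1a} follows by continuity of the inner product:
\[
 \st[\varphi_n,\psi_n] = c(\varphi_n,\psi_n) + (Q_c\varphi_n, Q_c\psi_n)_\sK \to c(\varphi,\psi) + (Q_c^{**}\varphi, Q_c^{**}\psi)_\sK = \bar\st[\varphi,\psi].
\]

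Minimality of $\bar\st$ among closed form extensions of $\st$ is immediate from the representing-map point of view: if $\st \subset \st'$ with $\st'$ closed and semibounded (for the same $c$, which is allowed by shift invariance), then its representing map $Q_c'$ extends $Q_c$ and is closed by Lemma \ref{repform0}(b), so $Q_c^{**} \subset Q_c'$, and translating back yields $\bar\st \subset \st'$. For the lower bound, the inclusion $\dom \st \subset \dom \bar\st$ forces $m(\bar\st) \leq m(\st)$, while for any $\varphi \in \dom \bar\st$ with $\varphi_n \to_\st \varphi$ in $\dom \st$ one has $\bar\st[\varphi] = \lim \st[\varphi_n] \geq m(\st)\lim\|\varphi_n\|^2 = m(\st)\|\varphi\|^2$, giving the reverse inequality. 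The only delicate point is the equivalence between $\varphi_n \to_\st \varphi$ and $\varphi \in \dom Q_c^{**}$; once this is settled, everything else is essentially bookkeeping.
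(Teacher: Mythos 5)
Your proof is correct and follows essentially the same route as the paper: pass to the representing map via Lemma~\ref{repform0}, take the operator closure $Q_c^{**}$, define $\bar\st$ by \eqref{rep1}, and note that minimality of $Q_c^{**}$ as a closed extension of $Q_c$ translates into minimality of $\bar\st$. You simply spell out a few steps the paper leaves implicit, namely the identification of $\dom Q_c^{**}$ with the set of $\varphi$ admitting $\varphi_n \to_\st \varphi$, the limit formula \eqref{rep1a} via continuity of the inner product, and the equality of the lower bounds; in fact the paper's own proof never addresses the lower-bound claim, so your extra paragraph is a genuine improvement in completeness. One small imprecision (shared with the paper's terse phrasing) is in the minimality argument: a representing map $Q_c'$ for a closed extension $\st'$ need not literally extend $Q_c$, but is only an extension up to the partial isometry of Lemma~\ref{exiuni}; one should either pick $Q_c'$ compatibly or note that this identification preserves the form, after which $Q_c^{**}\subset Q_c'$ and hence $\bar\st\subset\st'$ indeed follow.
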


\begin{proof}
Since $\st$ is closable, also $Q_c$ is closable by Lemma \ref{repform0}, i.e.,
$Q_c^{**}$ is a closed operator.
Hence, the form $\bar{\st}$ defined by \eqref{rep1} is closed
and it clearly extends $\st$.
Since $Q_c^{**}$ is the smallest closed linear
operator extension of $Q_c$,
it follows  that $\bar \st$ is the smallest closed form-extension of $\st$.
By construction one has $\dom \bar{\st}=\dom Q_c^{**}$,
so that $\dom \bar \st$ is given by all $\varphi \in \sH$
for which there exists a sequence $\varphi_n \in \dom \st$
with  $\varphi_n \to_\st \varphi$ and \eqref{rep1a}.
 \end{proof}

If the form $\st \in \bF(\sH)$ is semibounded and closable,
then for any $a \in \dR$ also the form $\st-a \in \bF(\sH)$ is
semibounded and closable. Moreover, in this case one has
\[
 \overline{\st-a}=\bar{\st}-a, \quad a\in \dR;
\]
cf. Lemma \ref{closure}.
It should be stressed that the closure process in Lemma \ref{closure}
depends neither on the choices of the sequences
$\varphi_n \to_\st \varphi$ and $\psi_n \to_\st \psi$
in \eqref{expl}, \eqref{rep1a}, nor on the choices of
$c \leq m(\st)$ and of $Q_c$ in the representation \eqref{grepq0}.
The notion of a core is closely connected to Lemma \ref{closure}.
Let the form $\st \in \bF(\sH)$ be semibounded and closed.
Then a \textit{core for} $\st$ is a subset $\sD$  of $\dom \st$ such that
the (form) closure of the restriction $\st \uphar \sD$ is equal to $\st$;
cf. \cite[p. 317]{Kato}.
Likewise, let $T \in \bL(\sH, \sK)$ be a closed linear operator.
Then a \textit{core for} $T$  is a subset $\sD$ of $\dom T$ such that
the (graph) closure of $\{ \{\varphi, T \varphi\}:\, \varphi \in \sD\}$ is equal to $T$;
cf. \cite[p. 166]{Kato}.
Therefore, if the closed semibounded form $\st$ with lower bound $\gamma \in \dR$
and the closed linear operator $Q_c$ with $c \leq m(\st)$ are connected by
\[
 \st[\varphi, \psi]-c(\varphi, \psi)=(Q_c\varphi, Q_c \psi),
 \quad \varphi, \psi \in \dom \st=\dom Q_c,
\]
and  $\sD$ is a subset of $\dom \st=\dom Q_c$, then
$\sD$ is a core for $\st$ if and only if $\sD$ is a core for $Q_c$;
cf. Lemma \ref{closure}.

\medskip

The translation invariance of the closability of forms $\st \in \bF(\sH)$ is quite natural.
The situation is different in the case of nonnegative singular forms.
For instance, it will follow from the definition below that
if $\st \in \bF(\sH)$ is nonnegative and singular,
then $\st+c$ is not singular for any $c >0$.

\begin{definition}
Let the form $\st \in \bF(\sH)$ be nonnegative.
Then $\st$ is said to be singular
if for every $\varphi \in \dom \st$
there is a sequence $\varphi_n \in \dom \st$ such that
\begin{equation}\label{nula2}
 \varphi_n\to \varphi,\quad \st[\varphi_n] \to 0.
\end{equation}
\end{definition}

Clearly, an equivalent definition is that for every $\varphi \in \dom \st$
there is a sequence $\varphi_n \in \dom \st$ such that
\begin{equation}\label{nula22}
 \varphi_n\to 0,\quad \st[\varphi_n-\varphi] \to 0.
\end{equation}
For a discussion of singular nonnegative forms, see for instance \cite{Kosh}.
In \cite{S3} the singular part of a form was defined via the Lebesgue
decomposition as the difference between the form and its regular part;
see Section \ref{sec31}.

\begin{lemma}\label{repform01}
Let the form $\st \in \bF(\sH)$ be nonnegative
and let $Q \in \bL(\sH, \sK)$ be a representing map for $\st$, so that
\begin{equation}\label{grepq0+}
 \st[\varphi, \psi]=(Q \varphi, Q \psi)_{\sK},
 \quad \varphi, \psi \in \dom Q,
\end{equation}
and, consequently,
\begin{equation}\label{mq}
m(\st)=\inf \left\{ \,\frac{\|Q \varphi\|^2 }{ \|\varphi\|^2} :\,
\varphi \in \dom Q \, \right\}.
\end{equation}
Then $\st$ is  singular if and only if $Q$ is singular, and in this case
$m(\st)=0$.
\end{lemma}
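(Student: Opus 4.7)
The plan is to read both statements off directly from the defining identity \eqref{grepq0+}. Since $\dom Q=\dom \st$ and $\st[\varphi_n-\varphi]=\|Q\varphi_n-Q\varphi\|_\sK^2$ for all $\varphi_n,\varphi\in\dom\st$, the condition $\st[\varphi_n-\varphi]\to 0$ is literally the same as $Q\varphi_n\to Q\varphi$ in $\sK$. So essentially no real work is required: the singularity condition for $\st$ and the singularity condition for $Q$ collapse to the same condition on sequences.

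For the iff, I would use the equivalent formulation \eqref{nula22} of singularity of $\st$: for every $\varphi\in\dom\st$ there exists $\varphi_n\in\dom\st$ with $\varphi_n\to 0$ and $\st[\varphi_n-\varphi]\to 0$. Via the identity above this transcribes to the requirement that for every $\varphi\in\dom Q$ there exists $\varphi_n\in\dom Q$ with $\varphi_n\to 0$ and $Q\varphi_n\to Q\varphi$, which is precisely singularity of $Q$ in the sense used throughout the paper. Both implications are obtained simultaneously by this translation.

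For the remaining claim $m(\st)=0$, I would revert to the original form \eqref{nula2} of singularity. Excluding the trivial case $\dom\st=\{0\}$, pick any nonzero $\varphi\in\dom\st$ and take $\varphi_n\in\dom\st$ with $\varphi_n\to\varphi$ and $\st[\varphi_n]\to 0$. Since $\|\varphi_n\|\to\|\varphi\|>0$, the Rayleigh quotient $\st[\varphi_n]/\|\varphi_n\|^2$ tends to $0$, so by \eqref{mq} one has $m(\st)\le 0$; the reverse inequality is given by nonnegativity of $\st$. There is no genuine obstacle here---the only matter of care is to pick the appropriate version \eqref{nula2} versus \eqref{nula22} of the singularity condition for each half of the argument.
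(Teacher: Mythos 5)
Your proof is correct and takes essentially the same approach as the paper: translate the sequence characterization of singularity of $\st$ through the identity $\st[\varphi]=\|Q\varphi\|^2$, and then read off $m(\st)=0$ from the Rayleigh quotient. The only cosmetic difference is that you use \eqref{nula22} (which transcribes to $\ran Q\subset\mul Q^{**}$) for the equivalence, whereas the paper uses \eqref{nula2} (giving $\dom Q\subset\ker Q^{**}$); these are equivalent characterizations of singularity for an operator $Q$, and the paper itself records your version in the remark at the end of Section \ref{sec2}.
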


\begin{proof}
Since $\st$ is nonnegative it follows that $m(\st) \geq 0$.
Thus by taking $c=0$ in \eqref{grepq0} one sees that \eqref{grepq0+} holds
for some representing map $Q$ from $\sH$ to a Hilbert space $\sK$.
Furthermore, now \eqref{mq} is a consequence of \eqref{brandnew}.

According to the definition in \eqref{nula2} the form $\st$ is singular
if and only if for every $\varphi\in \dom \st=\dom Q$
there is a sequence $\varphi_n\in \dom \st$ such that
\begin{equation}\label{duf}
  \varphi_n\to \varphi,\quad \st[\varphi_n]=\|Q\varphi_n\|^2 \to 0.
\end{equation}
This condition means that $\dom Q\subset \ker Q^{**}$,
i.e., $Q$ is a singular operator. Moreover, it follows from \eqref{duf}
and \eqref{mq} that $m(\st)=0$. Conversely, if $Q$ is singular, it follows
from \eqref{grepq0+} that the form $\st$ is nonnegative and singular.
\end{proof}

It is clear that a null form $\st \in \bF(\sH)$, i.e., $\dom \st=\ker \st$,
is nonnegative and simultaneously, closable and singular.
Conversely, if $\st \in \bF(\sH)$ is a nonnegative form
which is both regular and singular, then it follows
from Lemma \ref{repform01} that $\st=0$.

\medskip

Recall that for linear operators $Q_1 \in \bL(\sH, \sK_1)$ and $Q_2 \in \bL(\sH, \sK_2)$
one says that $Q_1$ is \textit{contractively dominated} by $Q_2$,
in notation $Q_1 \prec_c Q_2$, if
 \begin{equation}\label{prec}
\dom Q_2 \subset \dom Q_1 \quad \mbox{and} \quad
\|Q_1 \varphi \| \leq \|Q_2 \varphi \|,  \quad \varphi \in \dom Q_2,
\end{equation}
see \cite[Definition 8.1, Lemma 8.2]{HSS2018}.
If for $i=1,2$ one has
\[
\st_i[\varphi, \psi]=c(\varphi, \psi)+ (Q_i \varphi, Q_i \psi)_{\sK_i},
 \quad \varphi, \psi \in \dom \bar \st_i =\dom Q_i
\]
then clearly
\begin{equation}\label{prec1}
 \st_1 \leq \st_2 \quad \Leftrightarrow \quad Q_1 \prec_c Q_2.
\end{equation}
This equivalence will be tacitly used elsewhere in the paper.
In particular, one sees that if $\st_1$ and $\st_2$ are closable, then
\[
 \st_1 \leq \st_2 \quad \Rightarrow \quad \bar{\st}_1 \leq \bar{\st}_2,
\]
as  $Q_1 \prec_c Q_2$ implies that $(Q_1)^{**} \prec_c (Q_2)^{**}$;
cf. \cite{HSS2018}, \cite[(2.4)]{HS2023seq1}.

\medskip

Finally some remarks will be made concerning the definitions
of closable, closed, and singular forms in a Hilbert space.

\begin{remark}
Assume that the form $\st \in \bF(\sH)$ is semibounded
and let $Q_c \in \bL(\sH, \sK)$ be a representing map,
with $c \leq \gamma$.
If $\varphi_n \to_\st \varphi$ and $\psi_n \to_\st \psi$,
 then $\varphi_n \to \varphi$ and $\psi_n \to \psi$ in $\sH$,
while $Q_c \varphi_n$ and $Q_c \psi_n$ converge
in $\sK_c$ to, say, $h$ and $k$ in $\ran Q_c^{**}$.
This implies that
\begin{equation}\label{expl}
\lim_{n \to \infty} \st[\varphi_n, \psi_n]
=c (\varphi, \psi)+  (h,k).
\end{equation}
 Assume that $\mul Q_c^{**}$ is not trivial.
Let $\omega \in \mul Q_{c}^{**}$ with $\omega \neq 0$.
Then there is a sequence $\chi_n\in \dom Q_c$
such that $\chi_n\to 0$ and $Q_c \chi_n\to \omega$.
Therefore $\varphi_n+ a \chi_n \to_\st \varphi$
for all $a\in\dC$ and therefore
\[
 \lim_{n \to \infty} \st[\varphi_n+a \chi_n,\psi_n]=c(\varphi,\psi)+(h,k)+a(\omega,k).
\]
Thus it follows that for any $\omega \in \mul Q_c^{**}$ which is not orthogonal
to $k$ the limit in \eqref{expl} can take every possible value in $\dC$, as can be seen from
 \[
 \ran Q_c^{**} =( \ran Q_c^{**} \ominus \mul Q_c^{**} ) \oplus \mul Q_c^{**}.
\]
Consequently, the limit in \eqref{expl} depends on the choice of the sequences
$\varphi_n$ and $\psi_n$.
Hence, in general, there does not exist a well defined extension for the form $\st$.

Finally, notice that the alternative characterization of singularity in \eqref{nula22}, i.e.
for every $\chi\in \dom \st$ there is a sequence $\chi_n \in \dom \st$, such that
\[
 \chi_n\to 0,\quad \mbox{and} \quad \st[\chi_n-\chi] \to 0,
\]
is equivalent to saying that $\ran Q \subset \mul Q^{**}$.
\end{remark}

\section{Lebesgue  decompositions of semibounded forms}\label{sec31}

It will be shown that for a semibounded form $\st \in \bF(\sH)$ there exists a sum decomposition
$\st=\st_1+\st_2$, $\dom \st=\dom \st_1=\dom \st_2$, where $\st_1 \in \bF(\sH)$ is closable
and $\st_2 \in \bF(\sH)$ is singular. Moreover, it will be shown that this decomposition is unique
by the property that $\st_1$ is the largest of all closable forms below $\st$.
This so-called Lebesgue decomposition and its uniqueness go back to Simon \cite{S3};
the present proofs are based on the Lebesgue decomposition of the representing map of
the form $\st$.

\medskip

Let the form $\st \in \bF(\sH)$ be semibounded,
let $c \leq m(\st)$, and assume that $\st$ has the representation
\begin{equation}\label{LlebB}
\st[\varphi, \psi]=c(\varphi, \psi)+(Q \varphi, Q \psi), \quad \varphi, \psi \in \dom \st=\dom Q,
\end{equation}
where $Q \in \bL(\sH, \sK)$ is a representing map for $\st-c$.
The closure of $Q$ is a closed linear relation $Q^{**} \in \bL(\sH,\sK)$ and
let $P_0$ be the orthogonal projection from $\sK$ onto $\mul Q^{**}$.
Then the sum decomposition
\begin{equation}\label{lebtyp+2}
 Q=Q_{\rm reg}+Q_{\rm sing}, \quad Q_{\rm reg}=(I-P_0)Q, \quad   Q_{\rm sing}=P_0Q, \\
\end{equation}
where
$\dom Q_{\rm reg}=\dom Q_{\rm sing}=\dom Q$,
is the so-called Lebesgue decomposition of the representing $Q$; cf. \cite{HSS2018}, \cite{HS2023a}.
Here $Q_{\rm reg}$ is a closable operator
and $Q_{\rm sing}$ is a singular operator.
 Recall that $Q^{**} \in \bL(\sH, \sK)$ has a similar decomposition
\[
 Q^{**}=(Q^{**})_{\rm reg}+(Q^{**})_{\rm sing}, \quad
 (Q^{**})_{\rm reg}=(I-P_0)Q^{**}, \quad   (Q^{**})_{\rm sing}=P_0Q^{**}, \\
\]
where
$\dom (Q^{**})_{\rm reg}=\dom (Q^{**})_{\rm sing}=\dom Q^{**}$.
Here $(Q^{**})_{\rm reg}$ is the usual orthogonal operator part
of the closed linear relation $Q^{**}$.
It is important to observe that
\[
(Q_{\rm reg})^{**}=(Q^{**})_{\rm reg}, \quad \dom (Q_{\rm reg})^{**}=\dom Q^{**}.
\]
For more details, see  \cite{HSS2018}, \cite{HS2023a}, \cite{HS2023seq1}.

\begin{theorem}\label{tleb}
Let the form $\st \in \bF(\sH)$ be semibounded.
Then $\st$ has the sum decomposition
\begin{equation}\label{lebtyp+}
 \st=\st_{\rm reg}+\st_{\rm sing},
 \quad \dom \st=\dom \st_{\rm reg}=\dom \st_{\rm sing},
\end{equation}
with a closable semibounded form $\st_{\rm reg} \in \bF(\sH)$
and a singular nonnegative form $\st_{\rm sing} \in \bF(\sH)$.
In fact, let $c \leq m(\st)$ and assume
that $\st$ has the representation \eqref{LlebB}.
Then
\begin{equation}\label{lebtyp+3}
\st_{\rm reg}[\varphi, \psi]
=c (\varphi, \psi) +(Q_{{\rm reg}} \varphi, Q_{{\rm reg}} \psi),
\quad \st_{\rm sing}[\varphi,
\psi]=(Q_{{\rm sing}} \varphi, Q_{ {\rm sing}} \psi),
\end{equation}
with $\varphi, \psi \in \dom \st=\dom Q$.
Moreover, the closure $\overline{{\st}_{\rm reg}}$
of the closable part $\st_{\rm reg}$ is given by
\begin{equation}\label{lebtyp+4}
\overline{{\st}_{\rm reg}}\,[\varphi, \psi]
=c (\varphi, \psi) +((Q_{{\rm reg}})^{**} \varphi, (Q_{{\rm reg}})^{**}  \psi),
\quad  \varphi, \psi \in \dom \overline{\st_{\rm reg}}=  \dom Q^{**}.
\end{equation}
\end{theorem}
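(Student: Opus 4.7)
The plan is to lift the known Lebesgue decomposition \eqref{lebtyp+2} of the representing map $Q$ directly to a decomposition of the form $\st$, using the orthogonality of the projection $P_0$ onto $\mul Q^{**}$ to separate the inner product $(Q\varphi, Q\psi)$ into two orthogonal pieces. Then the required properties of $\st_{\rm reg}$ and $\st_{\rm sing}$ follow by translating the corresponding properties of $Q_{\rm reg}$ and $Q_{\rm sing}$ via Lemmas \ref{repform0} and \ref{repform01}, which were set up precisely for this purpose.

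Concretely, I would first fix $c \leq m(\st)$ and the representing map $Q \in \bL(\sH, \sK)$ as in \eqref{LlebB}. Since $P_0$ is an orthogonal projection in $\sK$, for every $\varphi, \psi \in \dom Q$ I can write
\[
 (Q\varphi, Q\psi) = ((I-P_0)Q\varphi, (I-P_0)Q\psi) + (P_0 Q \varphi, P_0 Q \psi)
 = (Q_{\rm reg}\varphi, Q_{\rm reg}\psi) + (Q_{\rm sing}\varphi, Q_{\rm sing}\psi).
\]
Substituting this into \eqref{LlebB} and defining $\st_{\rm reg}$ and $\st_{\rm sing}$ by \eqref{lebtyp+3} yields the sum decomposition \eqref{lebtyp+} with common domain $\dom Q = \dom \st$.

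Next I would check the asserted properties. Since $Q_{\rm reg}$ is closable (this is part of the Lebesgue decomposition of $Q$ recalled before the theorem) and since $\st_{\rm reg}$ is represented by $Q_{\rm reg}$ with the shift $c$, Lemma \ref{repform0} gives closability of $\st_{\rm reg}$; its lower bound is at least $c$ by construction, so $\st_{\rm reg}$ is semibounded. For $\st_{\rm sing}$, nonnegativity is immediate from $\st_{\rm sing}[\varphi] = \|Q_{\rm sing}\varphi\|^2 \geq 0$, and $Q_{\rm sing}$ is a singular operator, so Lemma \ref{repform01} gives singularity of $\st_{\rm sing}$.

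Finally, for the closure formula \eqref{lebtyp+4} I would invoke Lemma \ref{closure} applied to $\st_{\rm reg}$ and its representing map $Q_{\rm reg}$ (for the same $c$): the closure $\overline{\st_{\rm reg}}$ is represented by $(Q_{\rm reg})^{**}$, and the identity $(Q_{\rm reg})^{**} = (Q^{**})_{\rm reg}$ recalled just before the theorem gives $\dom (Q_{\rm reg})^{**} = \dom Q^{**}$, yielding exactly \eqref{lebtyp+4}. I do not foresee a real obstacle: essentially all the work has already been packaged into the operator-level Lebesgue decomposition and the translation lemmas of Section \ref{sec2}. The only subtlety worth flagging is that the decomposition \eqref{lebtyp+}, \eqref{lebtyp+3} is stated as \emph{existence}; uniqueness and maximality of $\st_{\rm reg}$ (announced in the introduction and attributed to Simon) are not part of this theorem and would be argued separately.
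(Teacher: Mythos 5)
Your proposal is correct and follows exactly the same route as the paper's proof: split the inner product via the orthogonal projection $P_0$, translate closability of $Q_{\rm reg}$ and singularity of $Q_{\rm sing}$ into the corresponding properties of the forms via Lemmas \ref{repform0} and \ref{repform01}, and obtain the closure formula from Lemma \ref{closure} together with $(Q_{\rm reg})^{**}=(Q^{**})_{\rm reg}$. You spell out the orthogonal splitting of $(Q\varphi,Q\psi)$ a bit more explicitly than the paper, but the argument is otherwise identical.
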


\begin{proof}
Since $P_0$ in \eqref{lebtyp+2} is an orthogonal projection, it  is clear from \eqref{lebtyp+3}
that $\st=\st_{\rm reg}+\st_{\rm sing}$; cf. \eqref{LlebB}.
Since $Q_{\rm reg}$ is a closable operator and $Q_{\rm sing}$ is a singular operator
it also follows from \eqref{lebtyp+3} that the forms $\st_{\rm reg}$ and $\st_{\rm sing}$
are regular and singular, respectively; cf. Lemma \ref{repform0} and Lemma \ref{repform01}.
Finally, \eqref{lebtyp+4} is a direct consequence of the definition of $\st_{\rm reg}$;
cf. Lemma \ref{closure}.
\end{proof}

The decomposition of the semibounded form $\st \in \bF(\sH)$ in \eqref{lebtyp+}
into the regular part $\st_{\rm reg}$ and the singular part $\st_{\rm sing}$
in \eqref{lebtyp+3} is called the \textit{Lebesgue decomposition} of $\st$.
It will be shown in the following theorem that $\st_{\rm reg}$ in
\eqref{lebtyp+3} does not depend on the choice of $c$ and $Q$ in \eqref{LlebB},
and neither does $\st_{\rm sing}=\st-\st_{\rm reg}$.

\begin{theorem}\label{tleb1}
Let the form $\st \in \bF(\sH)$ be semibounded.
Then $\st_{\rm reg}$ as defined in Theorem {\rm{\ref{tleb}}}
satisfies $\st_{\rm reg} \leq \st$.
Let $\su \in \bF(\sH)$ be a semibounded form and
assume that $\su \leq \st$, then
\begin{equation}\label{lebtyp++}
 \su_{\rm reg} \leq \st_{\rm reg}.
\end{equation}
In particular, if the form $\su \in \bF(\sH)$ is semibounded
and $\su \leq \st$, then
\[
 \su \leq \st_{\rm reg}.
\]
Consequently,  $\st_{\rm reg}$ is the largest
of all closable semibounded forms below $\st$.
\end{theorem}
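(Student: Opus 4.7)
The plan is to transfer the comparison to the representing maps and invoke the operator-theoretic maximality of the regular part from \cite{HSS2018}. First, $\st_{\rm reg}\le\st$ is immediate from Theorem~\ref{tleb}: the decomposition $\st=\st_{\rm reg}+\st_{\rm sing}$ holds on a common domain and $\st_{\rm sing}$ is nonnegative, whence $\st_{\rm reg}[\varphi]\le\st[\varphi]$ for $\varphi\in\dom\st$.

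For the monotonicity \eqref{lebtyp++}, pick $c\le\min\{m(\st),m(\su)\}$ and let $Q\in\bL(\sH,\sK)$ and $R\in\bL(\sH,\sL)$ be representing maps for $\st-c$ and $\su-c$, respectively. By \eqref{prec1} the hypothesis $\su\le\st$ is equivalent to $R\prec_c Q$, and \eqref{lebtyp++} is equivalent to $R_{\rm reg}\prec_c Q_{\rm reg}$. The crucial input is the operator-theoretic statement from \cite{HSS2018}: if $S\in\bL(\sH,\sM)$ is closable and $S\prec_c Q$, then $S\prec_c Q_{\rm reg}$; in other words, $Q_{\rm reg}$ is maximal among closable operators contractively dominated by $Q$. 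Granting this, apply it with $S=R_{\rm reg}$. The decomposition $R=R_{\rm reg}+R_{\rm sing}$ is pointwise orthogonal since $\ran R_{\rm reg}\subset(\mul R^{**})^{\perp}$ and $\ran R_{\rm sing}\subset\mul R^{**}$, so $R_{\rm reg}\prec_c R\prec_c Q$; together with closability of $R_{\rm reg}$, the cited fact yields $R_{\rm reg}\prec_c Q_{\rm reg}$, which is \eqref{lebtyp++}.

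The idea behind the key operator-theoretic fact is as follows. Fix $\varphi\in\dom Q=\dom Q_{\rm reg}$ and split $Q\varphi=Q_{\rm reg}\varphi+Q_{\rm sing}\varphi$ with $Q_{\rm sing}\varphi\in\mul Q^{**}$. Choose $\chi_n\in\dom Q$ with $\chi_n\to 0$ and $Q\chi_n\to Q_{\rm sing}\varphi$. The bound $\|S\chi_n\|\le\|Q\chi_n\|$ keeps $S\chi_n$ in a fixed ball, and a Mazur/convex-combination argument together with closability of $S$ forces $S\chi_n\rightharpoonup 0$ weakly. Since $Q(\varphi-\chi_n)\to Q_{\rm reg}\varphi$ in norm, weak lower semicontinuity of the norm gives
\[
\|S\varphi\|\le\liminf_{n}\|S(\varphi-\chi_n)\|\le\lim_{n}\|Q(\varphi-\chi_n)\|=\|Q_{\rm reg}\varphi\|.
\]
This step — using closability of $S$ to suppress the contribution of $\mul Q^{**}$ — is the main obstacle and is the substantive content delegated to \cite{HSS2018}.

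Finally, the ``in particular'' assertion, read with the implicit closability of $\su$ required by the concluding maximality statement, follows because a closable semibounded form $\su$ has vanishing singular part: by Lemma~\ref{repform0} the map $R$ is closable, so $\mul R^{**}=\{0\}$, $R_{\rm sing}=0$, and $\su=\su_{\rm reg}$. Hence \eqref{lebtyp++} gives $\su\le\st_{\rm reg}$, which together with $\st_{\rm reg}\le\st$ and the closability of $\st_{\rm reg}$ (Theorem~\ref{tleb}) identifies $\st_{\rm reg}$ as the largest closable semibounded form below $\st$.
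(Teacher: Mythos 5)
Your proposal is correct and follows essentially the same route as the paper: reduce to representing maps, use the equivalence $\su\le\st\Leftrightarrow R\prec_c Q$ from \eqref{prec1}, and invoke the operator-level monotonicity of regular parts from \cite{HSS2018} (you phrase it as maximality of $Q_{\rm reg}$ among closable dominated operators and apply it to $S=R_{\rm reg}$, which is an equivalent reformulation of the fact the paper cites from \cite[Theorem 8.3]{HSS2018}). Your reading of the ``in particular'' clause as requiring $\su$ closable is also the correct interpretation, consistent with both the final sentence of the theorem and the paper's own proof.
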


\begin{proof}
By assumption $\su \in \bF(\sH)$ is a semibounded form with $\su \leq \st$, i.e.,
\[
 \dom \st \subset \dom \su \quad \mbox{and}
 \quad \su[\varphi] \leq \st[\varphi], \quad \varphi \in \dom \st.
\]
Let the lower bound of $\su$ be $c \in \dR$.
Thanks to the inequality $\su \leq \st$, it follows that $c \leq m(\st)$.
By Lemma \ref{exiuni}  the form $\su$ (having lower bound $c$)
has a representation of the form
\[
 \su[\varphi, \psi]=c (\varphi, \psi)+
 (R_c \varphi, R_c \psi), \quad \varphi, \psi \in \dom \su=\dom R_c,
\]
with a representing map $R_c$ from $\sH$ to a Hilbert space $\sK_c'$.
Since $c \leq m(\st)$, it follows from the same lemma that the form $\st$
(with lower bound $\gamma$) has a representation of the form
\[
\st[\varphi, \psi]=c  (\varphi, \psi) +(Q_c \varphi, Q_c  \psi),
\quad \varphi, \psi \in \dom \st=\dom Q_c,
\]
where $Q_c$ is a representing map from $\sH$ to a Hilbert space $\sK_c$.
As in Theorem \ref{tleb} one sees that
\[
\su_{\rm reg}[\varphi, \psi]
=c (\varphi, \psi)+(R_{c, {\rm reg}} \varphi, R_{c,{\rm reg}} \psi),
\quad \varphi, \psi \in \dom R_c,
\]
and, likewise, it follows from Theorem \ref{tleb} that
\[
\st_{\rm reg}[\varphi, \psi]=c (\varphi, \psi)+(Q_{c, {\rm reg}} \varphi, Q_{c, {\rm reg}} \psi),
\quad \varphi, \psi \in \dom Q_c.
\]
Using the representations of $\su$ and $\st$ one sees that the inequality $\su \leq \st$
is equivalent to
\[
 \dom Q_c \subset \dom R_c \quad \mbox{and} \quad
  \|R_c \varphi\| \leq  \|Q_c \varphi\|,
 \quad \varphi \in \dom Q_c.
\]
As a consequence from \cite[Theorem 8.3]{HSS2018}, one finds that
\begin{equation}\label{presreg}
 R_{c, {\rm reg}} \prec Q_{c, {\rm reg}},
\end{equation}
see \eqref{prec}.
Now, using the above representations of $\su_{\rm reg} $ and $\st_{\rm reg}$,
it follows that the inequality \eqref{lebtyp++} holds; cf. \eqref{prec1}.
Since $\st_{\rm reg} \leq \st$ and $\st_{\rm reg}$ is closable, it is clear that
$\st_{\rm reg}$ has the stated maximality property.
\end{proof}

\begin{remark}
The following useful observation goes back to \cite{S3}.
Let the form $\st \in \bF(\sH)$ be semibounded,
let the form $\sb\in \bF(\sH)$  be symmetric
with $\dom \st\subset \dom \sb$, and assume that for some $\alpha \geq 0$
\[
|\sb[\varphi]| \leq \alpha \|\varphi\|^2,
\quad \varphi \in \dom \st.
\]
Then the sum $\st+\sb \in \bF(\sH)$
is semibounded (with $\dom (\st+\sb)=\dom \st$).
Moreover, if $\st$ is closable, then $\st+\sb$ is closable,
cf. \cite[p. 320]{Kato} and \cite[Theorem 5.1.16]{BHS},
and in this case
\begin{equation}\label{regsingpert}
  (\st+\sb)_{\rm reg} =  \st_{\rm reg} +\sb
  \quad \mbox{and} \quad
   (\st+\sb)_{\rm sing} =  \st_{\rm sing}.
\end{equation}
To see this, note that  $\st_{\rm reg} \leq \st$ implies
$\st_{\rm reg} +\sb \leq \st+\sb$,
and, since the left-hand side is closable, one obtains
$\st_{\rm reg} +\sb \leq (\st+\sb)_{\rm reg}$ by Theorem \ref{tleb1}.
Likewise $(\st+\sb)_{\rm reg} -\sb$ is closable
and $(\st+\sb)_{\rm reg} -\sb \leq \st$, so that
$(\st+\sb)_{\rm reg} -\sb \leq \st_{\rm reg}$ by Theorem \ref{tleb1}.
Combining these inequalities gives the first identity in \eqref{regsingpert}
 and the second identity in \eqref{regsingpert} is clear from the first one.
\end{remark}

It follows from \eqref{lebtyp+2}  that
$\ran Q_{\rm reg} \subset \ran (I-P_0)$ and $\ran Q_{\rm sing} \subset \ran P_0$.
If the representing map $Q$ is minimal: $\cran Q=\sK$, then
$\cran Q_{\rm reg} = \ran (I-P_0)$ and $\cran Q_{\rm sing} = \ran P_0$.
Clearly, the regular and singular  components in the Lebesgue decomposition
of  $Q$ have only a trivial intersection of the ranges.
The Lebesgue decomposition is an example of a more general decomposition
of the form $\st=\st_1+\st_2$ where $\st_1$ is closable and $\st_2$ is singular.
In fact, by considering orthogonal Lebesgue type decompositions of the
representing map $Q$ one may easily construct such decompositions in the same way
as the Lebesgue decomposition.  Moreover, also in these cases the ranges
of the components of $Q$ have only a trivial intersection; cf. Corollary \ref{Lebtypepp}.
One then says that the corresponding forms $\st_1$ and $\st_2$
are mutually singular.
However not every decomposition $\st=\st_1+\st_2$ where $\st_1$ is closable and
$\st_2$ is singular, can be obtained via an orthogonal Lebesgue type decomposition of the
representing map $Q$.
The required class of decompositions is much wider  and
only in this more general situation one meets components $\st_1$ and $\st_2$
that are not mutually singular; cf. Theorem \ref{Lebtypep}.
In order to characterize all such decompositions it is helpful to first
investigate the sum decompositions of nonnegative forms into nonnegative forms
in Section \ref{sec3}. These will then be used to study Lebesgue type decompositions
of semibounded forms in Section \ref{sec32}.

\section{Sum decompositions of nonnegative forms}\label{sec3}

In this section the interest will be in the sum decomposition $\sh=\sh_1+\sh_2$
of a nonnegative form $\sh \in \bF(\sH)$ with nonnegative forms
$\sh_1, \sh_2 \in \bF(\sH)$
with the additional property that $\dom \sh=\dom \sh_1=\dom \sh_2$.
The parametrization of such decompositions will be given by means of the
representing map of the sum.
The minimality of the representation of such a sum will be characterized.
Furthermore, the interaction of the components $\sh_1$ and $\sh_2$
in the sum decomposition will be described in terms of
the parallel sum of $\sh_1 : \sh_2$.

\medskip

First a characterization will be presented of all possible sum decompositions
of a nonnegative form. The construction in Theorem \ref{Lebtype} is based on the fact that
associated with a sum decomposition of nonnegative forms are natural inequalities
that arise from this decomposition; cf. \cite{PW1}.

\begin{theorem}\label{Lebtype}
Let $\sh \in \bF(\sH)$ be a nonnegative form
and assume that $\sh$ has the representation
\begin{equation}\label{lebB}
\sh[\varphi, \psi]=
(Q \varphi, Q \psi), \quad \varphi, \psi \in \dom \sh=\dom Q,
\end{equation}
where $Q \in \bL(\sH, \sK)$ is a representing map.

Let $K\in \bB(\sK)$ be a nonnegative contraction
and define the forms $\sh_1, \sh_2 \in \bF(\sH)$
 for all $\varphi, \psi \in \dom \sh=\dom Q$ by
\begin{equation}\label{lebB2}
 \sh_1[\varphi, \psi]=
 ((I-K)^{\half}Q \varphi, (I-K)^{\half}Q \psi), \quad
 \sh_{2}[\varphi, \psi]=(K^{\half}Q \varphi, K^{\half}Q \psi).
\end{equation}
Then the form $\sh$ has the sum decomposition
 \begin{equation}\label{lebB1}
 \sh=\sh_1+\sh_2, \quad \dom \sh=\dom \sh_1=\dom \sh_2,
\end{equation}
where $\sh_1$ and $\sh_2$ are nonnegative.

Conversely, let the nonnegative form $\sh \in \bF(\sH)$ in \eqref{lebB}
have a sum decomposition \eqref{lebB1},
where $\sh_1, \sh_2 \in \bF(\sH)$ are nonnegative.
Then there exists a nonnegative contraction $K\in \bB(\sK)$
such that  \eqref{lebB2} holds.
\end{theorem}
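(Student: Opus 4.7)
The plan splits along the two directions. For the forward direction, since $K\in\bB(\sK)$ is a nonnegative contraction, both $K$ and $I-K$ lie in $\bB(\sK)$ as nonnegative operators and therefore admit selfadjoint square roots. Consequently the prescriptions in \eqref{lebB2} define nonnegative forms with domain $\dom Q=\dom \sh$. Using the selfadjointness of $(I-K)^{\half}$ and $K^{\half}$ together with $(I-K)+K=I$, a one-line computation gives
\[
\sh_1[\varphi,\psi]+\sh_2[\varphi,\psi]
=((I-K)Q\varphi,Q\psi)+(KQ\varphi,Q\psi)
=(Q\varphi,Q\psi)
=\sh[\varphi,\psi],
\]
which is precisely \eqref{lebB1}, and the domain condition is immediate.

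For the converse the strategy is to encode the summand $\sh_2$ as a bounded nonnegative operator on $\cran Q$ and then extend trivially to $\sK$. From $\sh=\sh_1+\sh_2$ and $\sh_1\geq 0$ one has $0\leq\sh_2\leq\sh$ on $\dom Q$. In particular, whenever $Q\varphi=0$ one has $\sh_2[\varphi]=0$, and by Cauchy's inequality applied to the nonnegative form $\sh_2$ this forces $\sh_2[\varphi,\psi]=0$ for every $\psi\in\dom Q$. Hence the rule
\[
B(Q\varphi,Q\psi):=\sh_2[\varphi,\psi],\quad \varphi,\psi\in\dom Q,
\]
unambiguously defines a sesquilinear form on $\ran Q\times\ran Q$, and the estimate
\[
|B(Q\varphi,Q\psi)|\leq\sh_2[\varphi]^{\half}\sh_2[\psi]^{\half}\leq\|Q\varphi\|\,\|Q\psi\|
\]
shows that $B$ is bounded by one and therefore extends by continuity to a bounded sesquilinear form on $\cran Q\times\cran Q$.

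Now a standard representation theorem for bounded Hermitian sesquilinear forms yields a selfadjoint operator $K_0\in\bB(\cran Q)$ with $0\leq K_0\leq I$ representing $B$, and extending $K_0$ by zero on $(\cran Q)^\perp$ produces a nonnegative contraction $K\in\bB(\sK)$ with
\[
(KQ\varphi,Q\psi)=\sh_2[\varphi,\psi], \quad \varphi,\psi\in\dom Q.
\]
Selfadjointness of $K^{\half}$ then delivers the second identity in \eqref{lebB2}, while subtracting from \eqref{lebB} yields the first. I expect the only delicate point to be the verification that $B$ is well defined and contractive on $\ran Q$, along with the minor bookkeeping needed when the representing map $Q$ is not minimal so that $(\cran Q)^\perp\neq\{0\}$; the remaining manipulations are routine translations between forms and operators of the kind already used in Section \ref{sec2}.
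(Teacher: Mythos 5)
Your proof is correct, and for the converse it takes a genuinely more direct route than the paper. The paper first invokes Lemma \ref{exiuni} to obtain representing maps $Q_1\in\bL(\sH,\sK_1)$, $Q_2\in\bL(\sH,\sK_2)$ for $\sh_1,\sh_2$, reads off from \eqref{ipsum} the contractive inequalities $\|Q_i\varphi\|\leq\|Q\varphi\|$, defines contractions $C_i\colon Q\varphi\mapsto Q_i\varphi$ from $\cran Q$ into $\sK_i$ (extended trivially to $\sK$), sets $K_i=C_i^*C_i^{**}$, checks $K_1+K_2=P_{\cran Q}$, and takes $K=K_2$. You bypass the auxiliary spaces $\sK_1,\sK_2$ and the factorization through $C_1,C_2$ entirely: since $0\leq\sh_2\leq\sh$, the sesquilinear form $B(Q\varphi,Q\psi):=\sh_2[\varphi,\psi]$ is well defined and contractive on $\ran Q$ (your Cauchy--Schwarz argument handles the well-definedness, and this is essentially where the paper's inequality $\|Q_2\varphi\|\leq\|Q\varphi\|$ is hiding), and the Riesz representation for bounded Hermitian forms then produces $K_0\in\bB(\cran Q)$ with $0\leq K_0\leq I$; trivial extension gives $K$. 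The resulting operator is the same ($K$ satisfies $(KQ\varphi,Q\psi)=\sh_2[\varphi,\psi]$ in both proofs), but your construction is shorter and does not rely on the existence of representing maps for the summands. What the paper's route buys is uniformity with the representing-map formalism used throughout the rest of the section (e.g., the column operators and $\cran Q_i$ reappear in Theorem \ref{minim}), so it sets up notation the authors want anyway; as a self-contained proof of Theorem \ref{Lebtype}, yours is a perfectly good and slightly cleaner alternative.
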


\begin{proof}
Assume that $K \in \bB(\sK)$ is a nonnegative contraction
and let $\sh_1, \sh_2 \in \bF(\sH)$ be defined by \eqref{lebB2}.
Due to the identity
\[
 ((I-K)^{\half})^*(I-K)^{\half}+(K^{\half})^*K^{\half}=K+(I-K)=I_\sK,
\]
it is clear from \eqref{lebB2}
that $\sh_1+\sh_2=\sh$. Hence, $K$ generates a sum decomposition
of the form $\sh$ in \eqref{lebB}.

\medskip

Conversely, let the form $\sh$ have the decomposition
as in \eqref{lebB1}.
According to Lemma \ref{exiuni} there exist representing maps
$Q_1$ and $Q_2$ from $\sH$ to Hilbert spaces
$\sK_1$ and $\sK_2$, respectively, with domains equal to $\dom \sh$,
such that
\begin{equation}\label{ipsum0}
\sh_1[\varphi, \psi] =
(Q_1 \varphi, Q_1 \psi)_{\sK_{1}}, \quad
\sh_2[\varphi, \psi] = (Q_2 \varphi, Q_2 \psi)_{\sK_{2}},
\end{equation}
for all $\varphi, \psi \in \dom \sh$.
It follows from \eqref{lebB1} and \eqref{ipsum0} that
\begin{equation}\label{ipsum}
\begin{split}
(Q \varphi, Q \psi)_{\sK}
 &=\sh[\varphi, \psi] = \sh_1[\varphi, \psi]+ \sh_2[\varphi, \psi] \\
& =(Q_{1}\varphi, Q_{1}\psi)_{\sK_1} + (Q_{2}\varphi, Q_{2}\psi)_{\sK_2},
 \end{split}
\end{equation}
for all $\varphi, \psi \in \dom \sh$.
The identity \eqref{ipsum} shows that there are natural
inequalities: for all $\varphi \in \dom \sh$ one has
\[
 (Q_{1}\varphi , Q_{1}\varphi)_{\sK_1}
 \leq \big(Q \varphi , Q \varphi \big)_{\sK}, \quad
  (Q_{2}\varphi , Q_{2}\varphi )_{\sK_2}
 \leq \big( Q \varphi , Q \varphi \big)_{\sK}.
\]
Hence, the linear relations $C_1$ from $\sK$ to $\sK_{1}$
and $C_2$ from $\sK$ to $\sK_{2}$, defined by
\[
 C_1 =\big\{ \{Q \varphi , Q_{1}\varphi \} :\, \varphi \in \dom \sh \big\},
\quad
 C_2 =\big\{ \{Q \varphi, Q_{2}\varphi \} :\, \varphi \in \dom \sh \big\},
\]
are actually contractive operators from $\ran Q$ onto $\ran Q_1$
and $\ran Q_2$, respectively. Thus they can be uniquely extended to
all of $\cran Q$ and these extensions are denoted again by $C_1$ and $C_2$,
respectively. Finally extend these operators trivially to all of $\sK$
and give these extensions the same notation.
Now define the operators $K_1$ and $K_2$ by
\[
 K_1=C_1^*C_1^{**} \quad \mbox{and} \quad K_2=C_2^*C_2^{**}.
\]
Then $K_1,K_2\in\bB(\sK)$ are nonnegative contractions and they satisfy
\begin{equation}\label{Jot1}
 \sh_1[\varphi, \psi]
  = \big(  C_1 Q \varphi , C_1 Q \psi  \big)_{\sK_{1}}
   =\big( K_1  Q \varphi ,  Q \psi  \big)_{\sK}
\end{equation}
and
\begin{equation}\label{Jot2}
 \sh_2[\varphi, \psi]
  = \big(  C_2 Q \varphi , C_2 Q \psi  \big)_{\sK_{2}}
   =\big( K_2  Q \varphi ,  Q \psi  \big)_{\sK}
\end{equation}
for all $\varphi, \psi  \in \dom \sh$.
By combining \eqref{Jot1} and \eqref{Jot2} with \eqref{ipsum},
it follows that
\[
 K_1 + K_2 = P_{\cran Q},
\]
where the right-hand side stands for the orthogonal projection from $\sK$ onto $\cran Q$.
Now let $K=K_2$, then the second identity in \eqref{lebB2} follows from \eqref{Jot2}.
Note that
\[
 I_\sK-K=P_{\cran Q} -K_2  +(I-P_{\cran Q})=K_1+(I-P_{\cran Q}),
\]
which leads to the identity
\[
(( I_\sK-K)Q\varphi, Q\psi)= (K_1 Q \varphi, Q \psi)_{\sK}, \quad \varphi, \psi \in \dom \sh.
\]
Thus the first identity in \eqref{lebB2} follows from \eqref{Jot1}.
\end{proof}

\begin{remark}
If the representing map $Q \in \bL(\sH, \sK)$ in \eqref{lebB} is minimal, 
then the contraction $K \in \bB(\sK)$ in \eqref{lebB2}  is unique
(see for a similar feature \cite[Remark 3.7]{HS2023a}).
To see this directly, assume that $K$ and $\wt K$ are nonnegative contractions
in $\bB(\sK)$ which satisfy \eqref{lebB2}.
Then it follows that  ,
\[
 \sh_{2}[\varphi, \psi]=(K^{\half}Q \varphi, K^{\half}Q \psi)
 =(\wt K^{\half}Q \varphi, \wt K^{\half}Q \psi), \quad \varphi, \psi\in\dom \sh.
\]
Hence $((K-\wt K)Q \varphi, Q \psi)=0$ for all $\varphi, \psi\in \dom \sh$.
By the assumption $\cran Q=\sK$ one concludes that $K-\wt K=0$.
\end{remark}

Let $Q_1 \in \bL(\sH, \sK_1)$ and $Q_2 \in \bL(\sH, \sK_2)$ be linear operators;
here $\sK_1$ and $\sK_2$ are Hilbert spaces.
The corresponding \textit{column operator} ${\rm col\,} (Q_1,Q_2)$
from $\sH$ to the orthogonal sum $\sK_{1} \oplus \sK_2$  is defined by
\[
 {\rm col\, } (Q_1,Q_2)=\begin{pmatrix} Q_{1} \\ Q_2 \end{pmatrix}:
 \dom Q_1 \cap \dom Q_2 \to \sK=\begin{pmatrix} \sK_{1} \\ \sK_2 \end{pmatrix}.
\]
The column operator is used to describe the representing map for a sum
of nonnegative forms. Let the forms $\sh_1, \sh_2 \in \bF(\sH)$
be nonnegative with $\dom \sh_1=\dom \sh_2$.
For $i=1,2$ let $Q_i \in \bL(\sH, \sK_i)$ be the representing maps for the forms $\sh_i$
with $\dom \sh=\dom Q_1=\dom Q_2$, such that
\begin{equation}\label{ortho1}
 \sh_1[\varphi, \psi]= (Q_{1} \varphi, Q_{1} \psi),
 \quad
 \sh_2[\varphi, \psi]=(Q_{2} \varphi, Q_{2} \psi),
 \end{equation}
where $\varphi, \psi\in \dom \sh_1=\dom \sh_2=\dom Q_1=\dom Q_2$.
Then ${\rm col\,} (Q_1,Q_2)$ is a representing map for the form $\sh_1+\sh_2$,
as follows with  $\varphi, \psi \in \dom \sh_1=\dom \sh_2$ from
\begin{equation}\label{ortho11}
\begin{split}
  (\sh_1+\sh_2)[\varphi, \psi]
  &= (Q_{1} \varphi, Q_{1} \psi +(Q_2 \varphi, Q_2 \psi) \\
  &= ({\rm col\,} (Q_1,Q_2)  \varphi, {\rm col\,} (Q_1,Q_2)  \psi).
\end{split}
\end{equation}
If in \eqref{ortho1} both $Q_1$ and $Q_2$ are minimal
in $\sK_1$ and $\sK_2$, respectively,
it does not necessarily follow that $\col (Q_1, Q_2)$ is minimal
in $\sK_1 \oplus \sK_2$.
A characterization of minimality in the context of Theorem \ref{Lebtype}
will be given in Theorem \ref{minim}.
 For this purpose let $K \in \bB(\sK)$ be a nonnegative contraction and
define the  Hilbert spaces $\sK_1$ and $\sK_2$ by
\[
\sK_1=\cran (I-K) \quad  \mbox{and} \quad \sK_2=\cran K,
\]
each provided with the inner product of $\sK$.
They satisfy $\sK=\sK_1+\sK_2$ with overlapping space
$\sK_1 \cap \sK_2=\cran (I-K)K$; for a treatment of the corresponding
de Branges-Rovnyak decompositions, see \cite{HS2023a}.
With the representations in \eqref{lebB2}
one defines $Q_1=(I-K)^\half Q$ and $Q_2=K^\half Q$. Then $\col (Q_1, Q_2)$
is a representing map in $\bL(\sH, \sK_1\times\sK_2)$,
so that for all $\varphi, \psi \in \dom \sh=\dom Q$
\begin{equation}\label{lebB3}
\sh[\varphi, \psi]=
\left(
\begin{pmatrix}
     (I-K)^{\half} \\
     K^{\half}
\end{pmatrix}Q \varphi,
\begin{pmatrix}
     (I-K)^{\half} \\
     K^{\half}
\end{pmatrix}Q \psi
\right).
\end{equation}

\begin{theorem}\label{minim}
Let the form $\sh \in \bF(\sH)$ be nonnegative
with the representation \eqref{lebB},
where $Q \in \bL(\sH, \sK)$ is a minimal representing map, i.e., $\cran Q=\sK$.
Let $K \in \bB(\sK)$ be a nonnegative contraction
and assume that with respect to $K$ the form $\sh$ has the representation \eqref{lebB3}.
Then $(I-K)^{\half}Q$ is an operator from $\sH$ into $\sK_1$ with dense range
and $K^{\half}Q$ is an operator from $\sH$ into $\sK_2$ with dense range.
Moreover, the following statements are equivalent:
\begin{enumerate} \def\labelenumi{\rm(\roman{enumi})}
\item
the representation \eqref{lebB3} is minimal;
\item
$K$ is an orthogonal projection.
\end{enumerate}
\end{theorem}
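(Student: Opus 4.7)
The plan is to translate minimality of the column representation \eqref{lebB3} into a single concrete identity in $\sK$, and then settle the two directions using the commutativity of $K^{\half}$ and $(I-K)^{\half}$. First, the preliminary density statements follow immediately from minimality of $Q$: since $\cran Q=\sK$ and $(I-K)^{\half}$ is bounded, $(I-K)^{\half}(\ran Q)$ is dense in $(I-K)^{\half}\sK=\ran(I-K)^{\half}$, which is itself dense in $\cran(I-K)^{\half}=\cran(I-K)=\sK_1$ by selfadjointness and nonnegativity of $I-K$; hence $\ran((I-K)^{\half}Q)$ is dense in $\sK_1$, and symmetrically $\ran(K^{\half}Q)$ is dense in $\sK_2$.

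Next, setting $C=\col((I-K)^{\half}Q,K^{\half}Q)$, I would reformulate minimality as follows: $(u,v)\in\sK_1\oplus\sK_2$ is orthogonal to $\ran C$ iff $(Q\varphi,(I-K)^{\half}u+K^{\half}v)_\sK=0$ for every $\varphi\in\dom Q$, which by $\cran Q=\sK$ is equivalent to
\[
 (I-K)^{\half}u+K^{\half}v=0 \quad \text{in }\sK.
\]
Thus \eqref{lebB3} is minimal precisely when this identity, combined with $u\in\sK_1$ and $v\in\sK_2$, forces $u=v=0$. For (ii)$\Rightarrow$(i), if $K$ is an orthogonal projection then $K^{\half}=K$ and $(I-K)^{\half}=I-K$ are themselves orthogonal projections with mutually orthogonal ranges $\sK_1=\ran(I-K)$ and $\sK_2=\ran K$; the identity collapses to $u+v=0$ in a pair of orthogonal subspaces, yielding $u=v=0$ at once.

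For the converse (i)$\Rightarrow$(ii) I would argue by contraposition. If $K$ is not a projection then $K(I-K)\neq 0$; since $K$ and $I-K$ commute, so do $K^{\half}$ and $(I-K)^{\half}$, and the operator $L:=K^{\half}(I-K)^{\half}=(I-K)^{\half}K^{\half}$ satisfies $L^2=K(I-K)\neq 0$, so $L\neq 0$. Pick $a\in\sK$ with $La\neq 0$ and set $u:=P_{\sK_1}(K^{\half}a)$, $v:=-P_{\sK_2}((I-K)^{\half}a)$; using $\ker(I-K)^{\half}=\sK_1^\perp$ and $\ker K^{\half}=\sK_2^\perp$ one finds $(I-K)^{\half}u=(I-K)^{\half}K^{\half}a=La$ and $K^{\half}v=-K^{\half}(I-K)^{\half}a=-La$, so the identity above holds while $u\neq 0$, contradicting minimality. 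The main obstacle is locating this candidate $(u,v)$; it is motivated by the de~Branges--Rovnyak overlap $\sK_1\cap\sK_2=\cran (I-K)K$, which is nontrivial exactly when $K$ is not a projection, and any nonzero element of the overlap, pulled back through the commuting square roots, produces the desired nontrivial solution.
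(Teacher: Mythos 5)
Your argument is correct, and it achieves the same reduction as the paper — namely, that minimality of $\col((I-K)^{\half}Q,K^{\half}Q)$ is equivalent, via $\cran Q=\sK$, to surjectivity of the column map $W=\col((I-K)^{\half},K^{\half})$ from $\sK$ to $\sK_1\oplus\sK_2$ — but you then settle that surjectivity question directly rather than citing it. The paper simply invokes \cite[Proposition 2.8]{HS2023a} for the fact that the isometry $W$ is onto iff $K$ is an orthogonal projection. You instead characterize $(\ran C)^\perp$ explicitly as the pairs $(u,v)\in\sK_1\oplus\sK_2$ with $(I-K)^{\half}u+K^{\half}v=0$, handle the easy implication when $K=K^2$ by orthogonality of $\sK_1$ and $\sK_2$, and for the contrapositive of the other implication construct an explicit nonzero pair $(u,v)$ from a vector $a$ with $K^{\half}(I-K)^{\half}a\ne0$. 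This buys you a self-contained, elementary proof that exposes exactly where non-projectionality produces a deficiency — the overlap $\cran(I-K)K$ generates the nontrivial kernel of $W^*$ — whereas the paper's version is shorter at the cost of an external reference to the de~Branges--Rovnyak machinery. One small point worth making explicit when you write this up: in the step $(I-K)^{\half}u=(I-K)^{\half}K^{\half}a$ you are using that $(I-K)^{\half}$ annihilates $\sK_1^\perp=\ker(I-K)^{\half}$, so $(I-K)^{\half}P_{\sK_1}=(I-K)^{\half}$; stating this makes the computation transparent and also immediately gives $u\ne0$ (since $u=0$ would force $La=0$).
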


\begin{proof}
The representation \eqref{lebB3} for the form $\sh \in \bF(\sH)$
follows from \eqref{lebB2} and \eqref{ortho11}.
By assumption $Q$ has dense range in $\sK$, which implies that $(I-K)^{\half}Q$
has dense range in $\cran(I-K)^{\half}=\sK_1$
and similarly $K^{\half}Q$ has dense range in $\cran K^{\half}=\sK_2$.

Recall from \cite{HS2023a} that $W=\col ((I-K)^{\half}, K^{\half})$
is an isometry from $\sK$ into the Hilbert space $\sK_1 \oplus \sK_2$.
Moreover, the isometry $W$ is surjective
if and only if $K$ is an orthogonal projection; cf. \cite[Proposition 2.8]{HS2023a}.
Finally observe that due to $\cran Q=\sK$ the mapping $W$ is surjective
if and only if the representing map $\col ((I-K)^{\half}Q, K^{\half}Q)$ is minimal.
\end{proof}

Next the interaction between the components in a sum decomposition
will be considered.  This interaction is measured in terms
of parallel sums.
To explain this, assume that $\sh_1, \sh_2 \in \bF(\sH)$ are nonnegative
with $\dom \sh_1=\dom \sh_2$.
 The \textit{parallel sum} of $\sh_1$ and $\sh_2$ is a nonnegative form
defined for $\varphi \in \dom \sh_1=\dom \sh_2$ by
\begin{equation}\label{parsum}
 (\sh_1 : \sh_2)[\varphi] =\inf
 \left\{ \sh_1[h+\varphi]+\sh_2[h] :\, h \in \dom \sh \right\},
\end{equation}
and polarization.
The definition in \eqref{parsum} and a proof that it is actually
a nonnegative form can be found in \cite[Proposition~2.2]{HSS2009}.
Likewise, for nonnegative operators $A, B \in \bB(\sK)$ one defines
the \textit{parallel sum} $A : B \in \bB(\sK)$ for $\varphi \in \sK$ by
\begin{equation}\label{parsum0}
( (A : B) \varphi, \varphi) = \inf
\left\{ (A(h+\varphi), h +\varphi)+(Bh, h) :\,   h \in \sK  \right\},
\end{equation}
and polarization, cf. \cite{FW}.

\begin{proposition}\label{repparsum}
Let the forms $\sh_1, \sh_2 \in \bF(\sH)$ be nonnegative with sum  $\sh=\sh_1+\sh_2$.
Let $Q \in \bL(\sH,\sK)$  be a minimal representing map for the sum $\sh$:
\[
 \sh[\varphi, \psi]= (Q\varphi, Q \psi),  \quad \varphi, \psi \in \dom \sh.
\]
Let $K\in\mathbf{B}(\sK)$ be the nonnegative contraction for which:
\[
\sh_1[\varphi, \psi]= ((I-K)^\half Q\varphi, (I-K)^\half Q \psi),   \quad
\sh_2[\varphi, \psi]= (K^\half Q\varphi, K^\half Q \psi),
\]
where $\varphi, \psi \in \dom \sh=\dom Q$.
Then the parallel sum $\sh_1:\sh_2 \in \bF(\sH)$ has the representation
\begin{equation}\label{ppaarr}
 (\sh_1:\sh_2)[\varphi,\psi]=\left(( (I-K):K)Q\varphi,Q\psi\right),
 \quad \varphi,\psi\in\dom \sh=\dom Q.
\end{equation}
\end{proposition}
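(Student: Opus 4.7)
The plan is to unwind the definition of the form parallel sum using the representations of $\sh_1$ and $\sh_2$ in terms of $Q$ and $K$, and then recognize the resulting scalar infimum as the definition of the operator parallel sum $(I-K):K$ evaluated at $Q\varphi$.

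Concretely, I would start from the defining infimum
\[
 (\sh_1:\sh_2)[\varphi]=\inf\{\sh_1[h+\varphi]+\sh_2[h]:\,h\in\dom\sh\}
\]
and substitute the given representations to get
\[
 \sh_1[h+\varphi]+\sh_2[h]
 =((I-K)(Qh+Q\varphi),Qh+Q\varphi)+(K\,Qh,Qh),
\]
for all $h\in\dom\sh=\dom Q$. Setting $g=Qh$, the right-hand side becomes the functional
\[
 F_\varphi(g):=((I-K)(g+Q\varphi),g+Q\varphi)+(Kg,g),
\]
so that
\[
 (\sh_1:\sh_2)[\varphi]=\inf\{F_\varphi(g):\,g\in\ran Q\}.
\]
Comparing with \eqref{parsum0}, one has $(((I-K):K)Q\varphi,Q\varphi)=\inf\{F_\varphi(g):\,g\in\sK\}$, so the task reduces to passing from an infimum over $\ran Q$ to one over all of $\sK$.

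The key observation — and the main (if mild) obstacle — is the justification of this last step. Since $K$ and $I-K$ are bounded selfadjoint operators on $\sK$, the functional $g\mapsto F_\varphi(g)$ is a continuous quadratic polynomial on $\sK$. By the minimality assumption $\cran Q=\sK$, for every $g\in\sK$ there is a sequence $g_n\in\ran Q$ with $g_n\to g$, and continuity gives $F_\varphi(g_n)\to F_\varphi(g)$. Hence
\[
 \inf_{g\in\ran Q}F_\varphi(g)=\inf_{g\in\sK}F_\varphi(g)=(((I-K):K)Q\varphi,Q\varphi),
\]
which proves \eqref{ppaarr} on the diagonal.

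Finally, the full sesquilinear identity \eqref{ppaarr} is obtained by polarization, which is legitimate because $\sh_1:\sh_2$ is known to be a genuine nonnegative form by \cite[Proposition~2.2]{HSS2009} and the right-hand side of \eqref{ppaarr} is a sesquilinear form by construction. Both sides agree on the diagonal, hence everywhere on $\dom\sh\times\dom\sh$.
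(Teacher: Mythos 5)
Your proposal is correct and follows essentially the same route as the paper: unwind the definition of the form parallel sum via the representing map, observe that the resulting infimum is over $\ran Q$ while the operator parallel sum $(I-K):K$ is defined by an infimum over all of $\sK$, and then use minimality (density of $\ran Q$) to equate the two infima, finishing by polarization. Your phrasing of the density step as the general fact that a continuous function on $\sK$ has the same infimum over a dense subset is a clean repackaging of the paper's explicit $\varepsilon$-argument, but the underlying idea is identical.
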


\begin{proof}
Let $\varphi\in\dom Q$ and consider
the quadratic form defined by \eqref{parsum}.
It follows from the definition in \eqref{parsum}
and the representations \eqref{lebB2} in Theorem~\ref{Lebtype} that
\begin{equation}\label{parsum2}
\begin{array}{rl}
   (\sh_1:\sh_2)[\varphi]& =\inf
 \big\{ \|(I-K)^{\half}Q(h+\varphi)\|^2+\|K^{\half}Qh\|^2 :\, h\in \dom Q \big\} \\
 & \geq
 \inf
 \big\{ \|(I-K)^{\half}(Q\varphi-g)\|^2+\|K^{\half}g\|^2 :\,  g\in\sK  \big\} \\
 & = \inf
 \left\{ \left((I-K)f,f\right)+\left(Kg,g\right):\, f+g=Q\varphi :\,  f,g\in\sK \right\} \\
 & = \left(( (I-K):K)Q\varphi,Q\varphi\right).
\end{array}
\end{equation}
The last equality follows from the definition in \eqref{parsum0}.

The reverse inequality in \eqref{parsum2} follows from
the denseness of $\ran Q$ in $\sK$.
To see this let $\varepsilon>0$ and select $g\in\sK$ such that
\[
 \|(I-K)^{\half}(Q\varphi-g)\|^2+\|K^{\half}g\|^2
 < \left(( (I-K):K)Q\varphi,Q\varphi\right)+\varepsilon.
\]
Since $\ran Q$ is dense in $\sK$, one has
$\lim_{n\to \infty}\|g-Qh_n\|=0$ for some sequence $h_n\in \dom Q$.
It follows that for every $\varepsilon>0$ there exists $h\in \dom Q$ such that
\[
 \|(I-K)^{\half}(Q(\varphi-h)\|^2+\|K^{\half}Qh\|^2
 < \left(( (I-K):K)Q\varphi,Q\varphi\right)+2\varepsilon.
\]
Taking the infimum over all $h\in\dom Q$ this leads to
\[
\begin{split}
& \inf
 \big\{ \|(I-K)^{\half}Q(\varphi-h)\|^2+\|K^{\half}Qh\|^2:\, h\in \dom Q \big\} \\
& \hspace{6.5cm} \leq \left(( (I-K):K)Q\varphi,Q\varphi\right).
\end{split}
\]
Therefore, equality prevails in \eqref{parsum2}
and this implies the statement by the polarization formula
for forms.
\end{proof}

Thus the representing map for the form $\sh_1:\sh_2$ is given by
$((I-K):K)^{\half}Q$, which involves the parallel sum of the operators
$I-K$  and $K$.
This result is a special case of the functional calculus developed in \cite{PW1}.
The parallel sum $\sh_1 : \sh_2$ in \eqref{ppaarr} measures the
interaction of the forms $\sh_1$ and $\sh_2$.
The forms $\sh_1$ and $\sh_2$ are called \textit{mutually singular}
if $\sh_1 : \sh_2=0$; cf. \cite[Proposition 2.10]{HSS2009}, in which case
there is no interaction
between the forms $\sh_1$ and $\sh_2$.

\begin{corollary}\label{inter0}
Under the conditions of Proposition {\rm \ref{repparsum}}
the following statements are equivalent:
\begin{enumerate} \def\labelenumi{\rm(\roman{enumi})}
\item the forms $\sh_1$ and $\sh_2$ are mutually singular;
\item $K$ is an orthogonal projection.
\end{enumerate}
\end{corollary}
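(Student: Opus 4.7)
The plan is to reduce the corollary to a purely operator-theoretic identity about the parallel sum of $I-K$ and $K$ via Proposition \ref{repparsum}, and then to verify that $(I-K):K = 0$ is equivalent to $K$ being an orthogonal projection.

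First, I would use Proposition \ref{repparsum} to translate the condition $\sh_1:\sh_2 = 0$ into a statement about the operator $(I-K):K \in \bB(\sK)$. By \eqref{ppaarr}, the condition that $(\sh_1:\sh_2)[\varphi,\psi] = 0$ for all $\varphi,\psi\in\dom \sh$ becomes $(((I-K):K)\,Q\varphi,Q\psi) = 0$ for all $\varphi,\psi\in\dom Q$. Since $\cran Q = \sK$ by the minimality assumption inherited from Proposition \ref{repparsum}, this is equivalent to $(I-K):K = 0$ as an operator identity in $\bB(\sK)$.

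Next, I would compute $(I-K):K$ explicitly. Using the definition \eqref{parsum0} with $A=I-K$ and $B=K$, one has for any $\varphi\in\sK$
\[
 (((I-K):K)\varphi,\varphi) = \inf_{h\in\sK}\bigl\{((I-K)(h+\varphi),h+\varphi) + (Kh,h)\bigr\}.
\]
Expanding and using $(I-K)+K = I$ gives
\[
((I-K)(h+\varphi),h+\varphi)+(Kh,h) = \|h+(I-K)\varphi\|^2 - \|(I-K)\varphi\|^2 + ((I-K)\varphi,\varphi),
\]
so the infimum is attained at $h=-(I-K)\varphi$ and equals $((I-K)\varphi,\varphi) - ((I-K)^2\varphi,\varphi) = (K(I-K)\varphi,\varphi)$. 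Thus $(I-K):K = K(I-K) = K-K^2$. (This also matches the classical Fillmore--Williams formula $A:B=A(A+B)^{-1}B$ with $A+B=I$.)

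Finally, $(I-K):K = 0$ is equivalent to $K = K^2$, and since $K$ is a nonnegative (in particular selfadjoint) contraction, this says exactly that $K$ is an orthogonal projection. Combining with the previous reduction, (i) and (ii) are equivalent. I do not anticipate any significant obstacle; the only care needed is in passing from the form identity to the operator identity $(I-K):K=0$, which is immediate from the minimality $\cran Q = \sK$.
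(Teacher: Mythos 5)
Your proof is correct and follows essentially the same route as the paper: reduce via Proposition \ref{repparsum} and minimality of $Q$ to the operator identity $(I-K):K=0$, then show $(I-K):K = K(I-K)$, which vanishes precisely when the selfadjoint contraction $K$ is idempotent, i.e.\ an orthogonal projection. The only (cosmetic) difference is that you derive $(I-K):K = K(I-K)$ by completing the square in the variational formula, whereas the paper simply invokes the Fillmore--Williams closed form $A:B = B - B(A+B)^{-1}B$ with $A+B=I$.
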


\begin{proof}
It follows from  \eqref{parsum2} that $\sh_1$ and $\sh_2$ are mutually singular
if and only if $((I-K): K)^\half Q=0$.
Since the representing map $Q$ in Proposition \ref{repparsum}
is assumed to be minimal,
this is equivalent to $((I-K): K)^\half =0$. Observe that by definition
\[
(I-K):K=K-K((I-K)+K)^{-1}K=(I-K)K,
\]
cf. \cite{FW}, from which the assertion follows.
 \end{proof}

Now return to the context of Theorem \ref{Lebtype} and Theorem \ref{minim}.
The sum decompositions \eqref{lebB1} of a form $\sh$
can be classified into two different categories:
the forms $\sh_1$ and $\sh_2$ are mutually singular
(precisely when $K$ is an orthogonal projection)
or they are not mutually singular (precisely when $K$ is not an orthogonal projection);
see the remark following Theorem 2.5 in \cite{S3}.

\section{Lebesgue type decompositions of semibounded forms}\label{sec32}

The Lebesgue decomposition of a semibounded form
$\st \in \bF(\sH)$ in Section \ref{sec31}
provides a sum decomposition of $\st$ into a closable part
$\st_{\rm reg}$  and a singular part $\st_{\rm sing}$.
Recall that $\st_{\rm reg}$ is the largest of all closable
semibounded forms below $\st$. By means of the
sum decompositions in Section \ref{sec3} it is now possible
to describe the general situation.

\medskip

Based on the Lebesgue decomposition in Section \ref{sec31}
the following definition is quite natural.

\begin{definition}\label{LebTyp}
Let the form $\st \in \bF(\sH)$ be semibounded.
Then a sum decomposition of $\st$, given by
\begin{equation}\label{lebtyp}
 \st=\st_1+\st_2,  \quad \dom \st=\dom \st_1=\dom \st_2,
\end{equation}
is called a Lebesgue type sum decomposition of $\st$
if  $\st_1\in \bF(\sH)$ is semibounded and closable,
and $\st_2 \in \bF(\sH)$ is nonnegative and singular.
\end{definition}

The following characterization of all Lebesgue type decompositions
of a semibounded form is an immediate consequence of Theorem \ref{Lebtype},
Lemma~\ref{repform0}, and Lemma~\ref{repform01},
by a reduction to nonnegative forms.

\begin{theorem}\label{Lebtypep}
Let the form $\st \in \bF(\sH)$ be semibounded,
let $c \leq m(\st)$, and assume that $\st$ has the representation
\begin{equation}\label{LebB}
\st[\varphi, \psi]=c(\varphi, \psi)+(Q \varphi, Q \psi), \quad \varphi, \psi \in \dom \st=\dom Q,
\end{equation}
where $Q \in \bL(\sH, \sK)$ is a representing map for $\st-c$.

Let $K \in \bB(\sK)$ be a nonnegative contraction which satisfies
 the conditions
\begin{equation}\label{KregQ}
  \clos \{k\in\cran (I-K):\, (I-K)^\half k \in \dom Q^* \}=\cran (I-K),
\end{equation}
 \begin{equation}\label{KsingQ0}
    \ran K^{\half} \cap \dom Q^* \subset \ker Q^*,
\end{equation}
and define the forms $\st_1$ and $\st_2$ by
\begin{equation}\label{LebB2b}
\begin{array}{l}
 \st_1[\varphi, \psi]=c(\varphi, \psi)+((I-K)^{\half}Q \varphi, (I-K)^{\half}Q \psi),\\
 \st_{2}[\varphi, \psi]=(K^{\half}Q \varphi, K^{\half}Q \psi).
\end{array}
\end{equation}
 Then the sum $\st=\st_1+\st_2$ in \eqref{lebtyp}
is a Lebesgue type decomposition of $\st$ in the sense of Definition {\rm \ref{LebTyp}}.

Conversely, let the sum $\st=\st_1+\st_2$ in \eqref{lebtyp}
be a Lebesgue type decomposition of $\st$ in the sense of Definition {\rm \ref{LebTyp}}.
 Then there exists a  nonnegative  contraction $K\in \bB(\sK)$
such that \eqref{KregQ}, \eqref{KsingQ0},
and \eqref{LebB2b} are satisfied.
\end{theorem}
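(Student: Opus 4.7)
My plan is to reduce to the nonnegative setting and then combine Theorem \ref{Lebtype} with the operator-theoretic criteria of Lemma \ref{repform0} and Lemma \ref{repform01}. Setting $\sh:=\st-c$, $\sh_1:=\st_1-c$, and $\sh_2:=\st_2$, the decomposition $\st=\st_1+\st_2$ becomes $\sh=\sh_1+\sh_2$; here $\sh$ is nonnegative with representing map $Q$, both $\sh_1$ and $\sh_2$ are nonnegative in view of \eqref{LebB2b}, and, since closability is shift invariant, $\st_1$ is closable if and only if $\sh_1$ is closable. Thus everything reduces to characterizing those nonnegative contractions $K\in\bB(\sK)$ for which the sum decomposition $\sh=\sh_1+\sh_2$ produced by \eqref{lebB2} has $\sh_1$ closable and $\sh_2$ singular.

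By Theorem \ref{Lebtype} every nonnegative sum decomposition of $\sh$ arises from such a $K$, while Lemma \ref{repform0}(a) and Lemma \ref{repform01} translate the closability of $\sh_1$ and the singularity of $\sh_2$ into closability of the representing map $(I-K)^{\half}Q$ and singularity of the representing map $K^{\half}Q$, respectively. It remains to translate these two operator-theoretic properties into the conditions \eqref{KregQ} and \eqref{KsingQ0} on $K$.

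This I do by adjoint calculus. Since $(I-K)^{\half}$ and $K^{\half}$ are bounded selfadjoint operators on $\sK$, one has $((I-K)^{\half}Q)^{*}=Q^{*}(I-K)^{\half}$ and $(K^{\half}Q)^{*}=Q^{*}K^{\half}$ with respective domains $\{k\in\sK:(I-K)^{\half}k\in\dom Q^{*}\}$ and $\{k\in\sK:K^{\half}k\in\dom Q^{*}\}$. For closability I use $\mul T^{**}=(\dom T^{*})^{\perp}$: because $\ran ((I-K)^{\half}Q)\subset \cran(I-K)$ and $\ker(I-K)$ sits automatically in the adjoint's domain, closability reduces to density of $\{k\in\cran(I-K):(I-K)^{\half}k\in\dom Q^{*}\}$ in $\cran(I-K)$, which is exactly \eqref{KregQ}. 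For singularity I use the equivalence $T$ singular $\Leftrightarrow$ $\dom T^{*}\subset \ker T^{*}$ combined with the identity $\ker Q^{*}=(\ran Q)^{\perp}$; a short manipulation shows that $k\in\dom(K^{\half}Q)^{*}$ lies in $\ker (K^{\half}Q)^{*}$ if and only if $K^{\half}k\in \ker Q^{*}$, and therefore the singularity of $K^{\half}Q$ is equivalent to \eqref{KsingQ0}.

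The main obstacle is the careful adjoint calculus when $\dom Q$ is not dense, since then $Q^{*}$ must be handled as a linear relation rather than an operator; here one relies on the general identities $\mul T^{**}=(\dom T^{*})^{\perp}$ and $\ker T^{*}=(\ran T)^{\perp}$, which remain valid for linear relations. Because each link in the chain of equivalences is reversible, the forward and converse parts of the theorem are obtained simultaneously; in the converse direction, the required $K$ is simply the nonnegative contraction supplied by Theorem \ref{Lebtype} applied to the given decomposition $\sh=\sh_1+\sh_2$.
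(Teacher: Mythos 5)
Your proposal follows the same route as the paper: reduce to the nonnegative form $\sh=\st-c$, invoke Theorem \ref{Lebtype} to parametrize sum decompositions by nonnegative contractions $K$, and then translate closability of $\sh_1$ and singularity of $\sh_2$ into properties of the representing maps $(I-K)^{\half}Q$ and $K^{\half}Q$ via Lemmas \ref{repform0} and \ref{repform01}. The one place you deviate is the final translation step: the paper outsources the equivalences ``$(I-K)^{\half}Q$ closable $\Leftrightarrow$ \eqref{KregQ}'' and ``$K^{\half}Q$ singular $\Leftrightarrow$ \eqref{KsingQ0}'' to Lemmas 4.1 and 4.3 of \cite{HS2023a}, whereas you prove them inline by adjoint calculus, using $((I-K)^{\half}Q)^{*}=Q^{*}(I-K)^{\half}$, $\mul T^{**}=(\dom T^{*})^{\perp}$ and $\ker T^{*}=(\ran T)^{\perp}$. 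Your derivation is correct: the reduction of density of $\dom((I-K)^{\half}Q)^{*}$ to density inside $\cran(I-K)$ (since $\ker(I-K)$ is automatically contained in the adjoint's domain), and the equivalence of $\dom(K^{\half}Q)^{*}\subset\ker(K^{\half}Q)^{*}$ with $\ran K^{\half}\cap\dom Q^{*}\subset\ker Q^{*}$, both check out. This makes your argument more self-contained than the paper's, at the cost of carrying out the relation-adjoint bookkeeping explicitly. Both your proof and the paper's implicitly use that the given decomposition in the converse direction has $\st_{1}-c\geq 0$ (needed to apply Theorem \ref{Lebtype}), a point the paper asserts without comment and which you inherit; this is not a discrepancy between you and the paper.
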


\begin{proof}
According to \eqref{LebB} the form $\sh=\st-c \in \bF(\sH)$ is nonnegative
with representing map $Q$. Assume $K \in \bB(\sK)$ is a nonnegative
contraction that satisfies  \eqref{KregQ} and \eqref{KsingQ0}.
Define the nonnegative forms $\sh_1, \sh_2 \in \bF(\sH)$ by
\[
 \sh_1[\varphi, \psi]=((I-K)^{\half}Q \varphi, (I-K)^{\half}Q \psi),
 \quad
 \sh_2[\varphi, \psi]=(K^{\half}Q \varphi, K^{\half}Q \psi).
\]
Then by Theorem \ref{Lebtype} one has $\sh=\sh_1+\sh_2$.
The conditions \eqref{KregQ} and \eqref{KsingQ0} guarantee that
$(I-K)^\half Q$ is regular and  $K^\half Q$  is singular;
cf. \cite[Lemma 4.1, Lemma 4.3]{HS2023a}.
Hence, by Lemma~\ref{repform0} and Lemma~\ref{repform01} it follows that
$\sh_1$ and $\sh_2$ are regular and singular, respectively.
Setting $\st_1=\sh_1+c$ and $\st_2=\sh_2$, then $\st_1$ is semibounded
and $\st_2$ is nonnegative. It is now clear that
$\st=\st_1+\st_2$ is a Lebesgue type decomposition of $\st$.

For the converse, let $\st \in \bF(\sH)$ be a semibounded form
 and let it have a Lebesgue type decomposition \eqref{lebtyp}.
Since the form $\st$ is assumed to have the representation \eqref{LebB}, one notes that
\[
\st-c= \sh_1+\sh_2, \quad \sh_1=\st_1-c \geq 0, \quad \sh_2=\st_2 \geq 0,
\]
is a sum decomposition of the nonnegative form $\sh=\st-c$ into
the nonnegative forms $\sh_1$ and $\sh_2$.
By Theorem \ref{Lebtype} there exists a nonnegative contraction $K\in \bB(\sK)$
such that  \eqref{lebB2} holds. Hence, it follows that \eqref{LebB2b} is then satisfied.
Since $\st_1$ is regular and $\st_2$ is singular, it is clear that
$(I-K)^\half Q$ is regular and  $K^\half Q$  is singular.
Thus by \cite[Lemma 4.1, Lemma 4.3]{HS2023a} the conditions \eqref{KregQ}
and \eqref{KsingQ0} are satisfied.
\end{proof}

It is a consequence of Proposition \ref {repparsum} and Corollary \ref{inter0} that
the interaction between the components in a
Lebesgue type decomposition \eqref{lebtyp}  can be
specified in the following sense.

\begin{corollary}\label{kwak}
Let the conditions of Theorem {\rm \ref{Lebtypep}}  be satisfied.
Then the parallel sum $(\st_1 -c) : \st_2 \in \bF(\sH)$ has the representation
\[
 ((\st_1-c) : \st_2)[\varphi,\psi]=\left(( (I-K):K)Q\varphi,Q\psi\right),
 \quad \varphi,\psi\in\dom \sh=\dom Q.
\]
In particular, the following statements are equivalent:
\begin{enumerate} \def\labelenumi{\rm(\roman{enumi})}
\item the forms $\st_1-c$ and $\st_2$ are mutually singular;
\item $K$ is an orthogonal projection.
\end{enumerate}
\end{corollary}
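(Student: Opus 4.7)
The plan is to view this corollary as a direct transcription of Proposition \ref{repparsum} and Corollary \ref{inter0} to the shifted setting. Set $\sh := \st - c$, $\sh_1 := \st_1 - c$, and $\sh_2 := \st_2$; then $\sh, \sh_1, \sh_2 \in \bF(\sH)$ are all nonnegative, $\sh = \sh_1 + \sh_2$ with common domain $\dom Q$, and by \eqref{LebB}--\eqref{LebB2b} the operator $Q$ represents $\sh$ while the contraction $K$ encodes the pair $(\sh_1, \sh_2)$ exactly as in the hypothesis of Proposition \ref{repparsum}. In particular, the parallel sum $(\st_1-c) : \st_2$ is literally the parallel sum $\sh_1 : \sh_2$ of nonnegative forms, to which the preceding section applies.

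First I would note that without loss of generality $Q$ may be taken minimal, i.e.\ $\cran Q = \sK$: all quantities appearing in the conclusion, namely $\sh_1[\varphi,\psi]$, $\sh_2[\varphi,\psi]$, and $(((I-K):K)Q\varphi, Q\psi)$, depend on $K$ only through its compression to the invariant subspace $\cran Q$, and the meaningful reading of statement (ii) is that this compression be an orthogonal projection. Hence one may replace $\sK$ by $\cran Q$ and $K$ by its compression $P_{\cran Q} K \uphar \cran Q$ without altering hypothesis or conclusion.

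With $Q$ minimal, Proposition \ref{repparsum} applied to the pair $(\sh_1, \sh_2)$ immediately yields
\[
 (\sh_1 : \sh_2)[\varphi,\psi] = (((I-K):K)Q\varphi, Q\psi), \quad \varphi,\psi \in \dom Q,
\]
which is the claimed formula. The equivalence (i)$\Leftrightarrow$(ii) then follows directly from Corollary \ref{inter0} applied to the same pair, since mutual singularity of $\sh_1$ and $\sh_2$ is by definition the same as mutual singularity of $\st_1-c$ and $\st_2$. The only real content of the proof is the reduction to the nonnegative case via the translation $\st \mapsto \st-c$ and the minimality normalization of $Q$; the conclusion is then a direct citation of the two preceding results, and the conditions \eqref{KregQ}, \eqref{KsingQ0} distinguishing the Lebesgue-type decomposition from a general sum decomposition play no role here because the parallel sum and its vanishing depend only on the pair $(\sh_1,\sh_2)$, not on any regularity/singularity properties of the components.
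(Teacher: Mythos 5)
Your core reduction is exactly what the paper intends: since the paper offers no explicit proof for this corollary and merely records it as a consequence of Proposition~\ref{repparsum} and Corollary~\ref{inter0}, your shift $\sh=\st-c$, $\sh_1=\st_1-c$, $\sh_2=\st_2$ followed by direct citation of those two results is the right and complete argument.

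One part of your normalization remark, however, is imprecise and would not survive scrutiny if $Q$ were genuinely non-minimal. You assert that $(((I-K):K)Q\varphi,Q\psi)$ depends on $K$ only through its compression to $\cran Q$, calling $\cran Q$ an ``invariant subspace.'' Nothing in the hypotheses of Theorem~\ref{Lebtypep} forces $\cran Q$ to be $K$-invariant, and in general it is not. Writing $K$ in block form with respect to $\sK=\cran Q\oplus(\sK\ominus\cran Q)$ and using $(I-K):K=K-K^2$, the compressed quantity is $P_{\cran Q}(K-K^2)P_{\cran Q}=K_{11}-K_{11}^2-K_{12}K_{12}^*$, whereas the compression $K'=K_{11}$ of $K$ gives $(I-K'):K'=K_{11}-K_{11}^2$. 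These differ by the nonnegative block $K_{12}K_{12}^*$, so the formula on the right of the corollary's display is \emph{not} determined by $K'$ alone. The clean way to phrase the reduction is instead: take $Q$ minimal from the outset (as the paper notes is always permissible, and as Proposition~\ref{repparsum} requires), so that the hypothesis of Proposition~\ref{repparsum} is satisfied literally with no compression of $K$ needed. With that change your proof is airtight; the rest --- including the observation that mutual singularity of $\st_1-c$ and $\st_2$ is by definition mutual singularity of $\sh_1$ and $\sh_2$, and that the closability/singularity conditions \eqref{KregQ}, \eqref{KsingQ0} play no role --- is correct and matches the paper.
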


For the convenience of the reader, the case of orthogonal projections
in Theorem  \ref{Lebtypep} will be considered separately.
According to Corollary \ref{kwak}, in this case  there is only a trivial interaction
between the components $\st_1-c$ and $\st_2$,

\begin{corollary}\label{Lebtypepp}
Let the form $\st \in \bF(\sH)$ be semibounded,
let $c \leq m(\st)$, and let $\st$ have the representation  \eqref{LebB}
with representing map $Q \in \bL(\sH,\sK)$.
Let $P \in \bB(\sK)$ be an orthogonal projection which satisfies
the conditions
  \begin{equation}\label{comsum1ap}
  \clos ( \ker P \cap \dom Q^*) =\ker P,
\end{equation}
\begin{equation}\label{comsum1bp}
 \ran P \cap \dom Q^* \subset \ker Q^*,
\end{equation}
and define the forms $\st_1$ and $\st_2$ by
\begin{equation}\label{LebB2bp}
\begin{array}{l}
 \st_1[\varphi, \psi]=c(\varphi, \psi)+((I-P) Q \varphi, (I-P) Q \psi),\\
 \st_{2}[\varphi, \psi]=(P Q \varphi, P Q \psi).
\end{array}
\end{equation}
 Then the sum $\st=\st_1+\st_2$ in \eqref{lebtyp}
is a Lebesgue type decomposition of $\st$.
In particular. if $P=P_0$ is  the orthogonal projection onto $\mul Q^{**}$,
then \eqref{comsum1ap} and \eqref{comsum1bp} are satisfied
and \eqref{LebB2b} leads to the Lebesgue decomposition.
\end{corollary}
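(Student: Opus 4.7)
The plan is to view this corollary as the direct specialization of Theorem \ref{Lebtypep} to the case where the nonnegative contraction $K$ happens to be an orthogonal projection $P$. For an orthogonal projection one has $P^{\half}=P$ and $(I-P)^{\half}=I-P$, so the formulas in \eqref{LebB2b} collapse verbatim to those in \eqref{LebB2bp}. Moreover, $\cran(I-P)=\ran(I-P)=\ker P$ and $\ran P^{\half}=\ran P$, so condition \eqref{KregQ} reads
\[
 \clos\{k\in\ker P:\, (I-P)k\in\dom Q^*\}=\ker P,
\]
which, since $(I-P)k=k$ on $\ker P$, is precisely \eqref{comsum1ap}; likewise \eqref{KsingQ0} becomes \eqref{comsum1bp}. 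Hence the first assertion follows immediately from Theorem \ref{Lebtypep}.

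For the ``In particular'' part I take $P=P_0$, the orthogonal projection onto $\mul Q^{**}$, and invoke the standard adjoint identity $\mul Q^{**}=(\dom Q^*)^\perp$, equivalently $\cdom Q^*=(\mul Q^{**})^\perp=\ker P_0$. Condition \eqref{comsum1ap} then follows because $\dom Q^*\subset \cdom Q^*=\ker P_0$, so $\ker P_0\cap\dom Q^*=\dom Q^*$, whose closure is $\ker P_0$. Condition \eqref{comsum1bp} is also trivial: any $k\in\ran P_0\cap\dom Q^*\subset \mul Q^{**}\cap\dom Q^*$ satisfies $k\perp k$, hence $k=0\in \ker Q^*$.

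Finally, to identify the resulting decomposition as the Lebesgue decomposition, I observe that with $P=P_0$ the maps appearing in \eqref{LebB2bp} are $(I-P_0)Q=Q_{\rm reg}$ and $P_0Q=Q_{\rm sing}$, exactly the components of the Lebesgue decomposition \eqref{lebtyp+2} of $Q$. Comparing with \eqref{lebtyp+3} in Theorem \ref{tleb} shows that $\st_1=\st_{\rm reg}$ and $\st_2=\st_{\rm sing}$. There is no real obstacle in this argument; the only subtlety worth flagging is the correct use of the relation-theoretic identity $\cdom Q^*=(\mul Q^{**})^\perp$, which is the standard orthogonal decomposition of $\sK$ induced by the closure of $Q$.
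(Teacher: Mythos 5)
Your proof is correct and follows essentially the same route as the paper: specialize Theorem \ref{Lebtypep} to $K=P$, verify that \eqref{KregQ} and \eqref{KsingQ0} reduce to \eqref{comsum1ap} and \eqref{comsum1bp}, and then check the case $P=P_0$ by the identity $\mul Q^{**}=(\dom Q^*)^\perp$. The only difference is that you work out the reduction of conditions \eqref{KregQ}, \eqref{KsingQ0} explicitly (using $(I-P)k=k$ on $\ker P$ and $P^{\half}=P$), whereas the paper simply cites \cite[Lemma 4.1, Lemma 4.2]{HS2023a}; your version is therefore slightly more self-contained but not materially different.
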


\begin{proof}
If $K \in \bB(\sK)$ is an orthogonal projection,
then the conditions \eqref{KregQ} and \eqref{KsingQ0}
are equivalent with the conditions
\eqref{comsum1ap} and \eqref{comsum1bp};
see \cite[Lemma 4.1, Lemma 4.2]{HS2023a}.
If $P_0$ is the orthogonal projection onto $\mul Q^{**}$, then
\[
\ran P_0=\ker (I-P_0)=\mul Q^{**}
\quad \mbox{and} \quad \ran (I-P_0)=\ker P_0=\cdom Q^*.
\]
With $P=P_0$ the conditions \eqref{comsum1ap} and \eqref{comsum1bp} are
satisfied. Comparing with \eqref{lebtyp+2} one sees that this choice corresponds
with the Lebesgue decomposition.
\end{proof}

Recall that the representing map $Q$ of a semibounded form $\st$ may always
be taken minimal. If this is the case,
then the conditions  \eqref{KsingQ0} and \eqref{comsum1bp} read as
\[
    \ran K^{\half} \cap \dom Q^* =\{0\}
    \quad \mbox{and} \quad
    \ran P \cap \dom Q^* \subset \{0\},
\]
respectively.
The following theorem was originally established in the context of pairs of
nonnegative forms in \cite{HSS2009}.

\begin{theorem} \label{uniq}
Let the form $\st \in \bF(\sH)$ be semibounded.
Then the following statements are equivalent:
\begin{enumerate}\def\labelenumi{\rm(\roman{enumi})}
\item  the Lebesgue decomposition of $\st$
is the only Lebesgue type decomposition of the form $\st$;
\item the form $\st_{\rm reg}$ is bounded.
\end{enumerate}
\end{theorem}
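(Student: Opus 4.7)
My plan is to prove the two implications separately, using the parametrization of Lebesgue type decompositions from Theorem \ref{Lebtypep} and the maximality property of $\st_{\rm reg}$ from Theorem \ref{tleb1}. I fix $c \leq m(\st)$ and a minimal representing map $Q \in \bL(\sH,\sK)$ for $\st - c$, so that $\cran Q = \sK$ and $\ker Q^* = \{0\}$.

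For (ii) $\Rightarrow$ (i), I take any Lebesgue type decomposition $\st = \st_1 + \st_2$. Since $\st_2 \geq 0$ one has $\st_1 \leq \st$; as $\st_1$ is closable and semibounded, Theorem \ref{tleb1} yields $\st_1 \leq \st_{\rm reg}$. I then set $\su := \st_{\rm reg} - \st_1 \geq 0$ on $\dom \st$. Sandwiching $\st_1$ between its own lower bound and $\st_{\rm reg}$ (bounded by hypothesis) shows $\st_1$ is bounded, hence $\su$ is a bounded nonnegative form, in particular closable. A direct computation gives $\st_2 - \su = \st - \st_{\rm reg} = \st_{\rm sing} \geq 0$, so $\su \leq \st_2$. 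Applying Theorem \ref{tleb1} to $\st_2$ -- whose regular part is zero, since a nonnegative singular form satisfies $\ran Q_2 \subset \mul Q_2^{**}$ by the remark after Lemma \ref{repform01} -- gives $\su \leq (\st_2)_{\rm reg} = 0$, forcing $\su = 0$ and therefore $\st_1 = \st_{\rm reg}$, $\st_2 = \st_{\rm sing}$.

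For (i) $\Rightarrow$ (ii) I argue the contrapositive. The closure $(Q_{\rm reg})^{**} = (I - P_0)Q^{**}$ represents $\bar\st_{\rm reg} - c$, so unboundedness of $\st_{\rm reg}$ is unboundedness of this operator. The computation $(Q_{\rm reg}\varphi, k) = (Q\varphi, (I - P_0)k)$, combined with the orthogonal splitting $\sK = \cdom Q^* \oplus \mul Q^{**}$ that comes with the closed relation $Q^{**}$, gives $\dom (Q_{\rm reg})^* = \dom Q^* + \mul Q^{**}$. Hence $(Q_{\rm reg})^{**}$ is bounded if and only if $\dom Q^* = \cdom Q^*$. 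Under our assumption this fails, so I pick a unit vector $v \in \cdom Q^* \setminus \dom Q^*$ and let $K$ be the orthogonal projection of $\sK$ onto the closed subspace $\mul Q^{**} \oplus \langle v \rangle$. Since $K \neq P_0$, the decomposition \eqref{LebB2b} associated with $K$ is distinct from the Lebesgue one; indeed $\st_2[\varphi] - \st_{\rm sing}[\varphi] = |(Q\varphi, v)|^2$, which is nonzero for some $\varphi \in \dom Q$ because $\cran Q = \sK$ and $v \neq 0$.

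To check that $K$ qualifies under Theorem \ref{Lebtypep}, I verify the two conditions. Condition \eqref{KsingQ0} holds since $\ran K = \mul Q^{**} \oplus \langle v\rangle$ meets $\dom Q^* \subset \cdom Q^*$ only at $0$: the $\mul Q^{**}$-component must vanish by orthogonality, and then $\alpha v \in \dom Q^*$ forces $\alpha = 0$. Condition \eqref{KregQ} reduces to showing $\dom Q^* \cap \langle v\rangle^\perp$ is dense in $(\mul Q^{**} \oplus \langle v\rangle)^\perp = \cdom Q^* \ominus \langle v\rangle$. For this I fix once and for all some $w_0 \in \dom Q^*$ with $(w_0, v) \neq 0$ (available because $\dom Q^*$ is dense in $\cdom Q^* \ni v$ and $v \neq 0$), and given $u \in \cdom Q^* \ominus \langle v\rangle$ and $\varepsilon > 0$ I pick $w' \in \dom Q^*$ with $\|w' - u\| < \varepsilon$ and correct it to $w := w' - \frac{(w', v)}{(w_0, v)} w_0 \in \dom Q^*$; then $(w, v) = 0$ and $\|w - u\| \leq \|w' - u\|\,(1 + \|w_0\|/|(w_0, v)|)$ thanks to $|(w', v)| = |(w' - u, v)| \leq \|w' - u\|$. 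The main obstacle in the whole argument is the identification $\st_{\rm reg}$ bounded $\Leftrightarrow$ $\dom Q^* = \cdom Q^*$, which requires handling the multivalued part of $Q^{**}$ via the formula for $\dom(Q_{\rm reg})^*$; once that is established, the rank-one enlargement of $P_0$ along a vector witnessing the strict inclusion provides the required second Lebesgue type decomposition.
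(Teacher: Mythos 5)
Your proof is correct, and both implications take a genuinely different route from the paper's. For (ii) $\Rightarrow$ (i) the paper argues through the contraction $K$ of Theorem \ref{Lebtypep}: using the closedness of $\dom Q^*$ it deduces $\ran(I-K)^{\half}\cap\ran K^{\half}=\{0\}$, so $K$ is an orthogonal projection, and then invokes \cite{HSS2018} to get $(I-K)Q=Q_{\rm reg}$. You bypass the $K$-parametrization entirely and argue at the form level: from $\st_1\leq\st$ and Theorem \ref{tleb1} you get $\st_1\leq\st_{\rm reg}$; the difference $\su:=\st_{\rm reg}-\st_1\geq 0$ is bounded, hence closable, and $\su\leq\st_2$; so $\su=\su_{\rm reg}\leq(\st_2)_{\rm reg}=0$. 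This is shorter and rests only on the maximality of $\st_{\rm reg}$ and the fact that a singular form has zero regular part. For (i) $\Rightarrow$ (ii) the paper simply cites \cite[Theorem 6.1]{HSS2018}, whereas you construct a second Lebesgue type decomposition explicitly when $\dom Q^*\subsetneq\cdom Q^*$, by enlarging $P_0$ with the rank-one projection onto a unit vector $v\in\cdom Q^*\setminus\dom Q^*$ and verifying \eqref{KregQ} and \eqref{KsingQ0} directly (the correction $w=w'-((w',v)/(w_0,v))w_0$ handles the density in \eqref{KregQ} cleanly). This makes the argument self-contained, but it does rely on the equivalence ``$\st_{\rm reg}$ bounded $\Leftrightarrow$ $\dom Q^*$ closed,'' which the paper cites from \cite[Theorem 6.1]{HSS2018} and \cite[Theorem 5.7]{HS2023a}; your in-line derivation via $\dom(Q_{\rm reg})^*=\dom Q^*\oplus\mul Q^{**}$ and a closed-graph argument is sound but compressed, and for full rigour it should address the possibly non-densely defined case (so that $(Q_{\rm reg})^*$ is a relation rather than an operator).
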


\begin{proof}
(i) $\Rightarrow$ (ii)
Assume that any Lebesgue type decomposition $\st=\st_1+\st_2$ of $\st$
is equal to the Lebesgue decomposition of $\st$.
Then, in particular, any Lebesgue type decomposition
$Q=(I-P)Q+PQ$, with $P$ an orthogonal projection
satisfying \eqref{KregQ} and \eqref{KsingQ0},
gives the  Lebesgue decomposition of $Q$.
By Theorem \cite[Theorem 6.1]{HSS2018} this implies that $Q_{\rm reg}$
is bounded, so that also the form $\st_{\rm reg}$ is bounded.

(ii) $\Rightarrow$ (i) Assume that $\st_{\rm reg}$ is bounded.
Let the form $\st$ have the representation
\[
\st[\varphi, \psi]=c(\varphi, \psi)+(Q \varphi, Q \psi),
\quad \varphi, \psi \in \dom \st=\dom Q,
\]
where $c \leq m(\st)$,
$Q$ is a representing map, and assume that $\cran Q=\sK$.
Then the regular part $Q_{\rm reg}$ is bounded or, equivalently,
$\dom Q^*$ is closed; cf. \cite[Theorem 6.1]{HSS2018} or
\cite[Theorem 5.7]{HS2023a}.
Now let $\st=\st_1+\st_2$ be a Lebesgue type decomposition of $\st$
of the form in Theorem \ref{Lebtypep} with a nonnegative contraction $K \in \bB(\sK)$.
It follows from $\mul Q^{**} \subset \ker (I-K)^\half$ or, equivalently,
$\ran (I-K)^\half \subset \dom Q^*$, that
\[
  \ran (I-K)^\half \cap \ran K^\half \subset \dom Q^* \cap \ran K^\half=\{0\},
\]
which shows that $K$ is an orthogonal projection.
This gives an orthogonal Lebesgue type decomposition $Q=(I-K)Q+KQ$ for $Q$.
Since $Q_{\rm reg}$ is bounded,  one sees that $(I-K)Q=Q_{\rm reg}$
by \cite{HSS2018}.
Hence, the Lebesgue type decomposition of $\st$ is equal to the Lebesgue decomposition.
\end{proof}

At the end of this section there is a brief discussion concerning
the connection between the Lebesgue type decompositions of a form $\st$
and the pseudo-orthogonal Lebesgue type decompositions of its representing map $Q$
(see \cite{HS2023a}). Let $K \in \bB(\sK)$ be any nonnegative contraction, then one can
write $Q$ as
\begin{equation}\label{qqqkw}
Q=Q_1+Q_2, \quad Q_1=(I-K)Q, \quad Q_2=KQ,
\end{equation}
 Recall that the decomposition \eqref{qqqkw}
is a pseudo-orthogonal Lebesgue type decomposition of $Q$
if and only if $K$ satisfies
\begin{equation}\label{condee}
 (I-K)Q \quad \mbox{is closable and} \quad KQ \mbox{ is singular}.
\end{equation}
However the nonnegative contraction $K$ generates a Lebesgue type
decomposition $\st=\st_1+\st_2$ via \eqref{LebB2bp} if and only if
\begin{equation}\label{conde}
\begin{split}
(I-K)^\half Q \mbox{ regular} \quad \mbox{and} \quad
K^\half Q  \mbox{ singular};
 \end{split}
\end{equation}
cf. \eqref{KregQ} and \eqref{KsingQ0}.
The conditions in \eqref{condee} and \eqref{conde} are
equivalent when $K$ is an orthogonal projection.
In the general case when $K$ is a nonnegative contraction
there is, for closability, only the implication
\begin{equation}\label{alf}
(I-K)Q \mbox{ is closable} \quad \Rightarrow \quad (I-K)^{\half}Q \mbox{ is closable}
\end{equation}
(use \cite[Corollary 4.2]{HS2023a}),
while both statements are equivalent with the closability of $T$ when $\|K\| <1$.
Likewise, there is, for singularity, only the implication
\begin{equation}\label{bet}
K^{\half}Q \mbox{ is singular} \quad \Rightarrow \quad KQ \mbox{ is singular}.
\end{equation}
Furthermore, it should be mentioned that, the converse statements
in \eqref{alf} and \eqref{bet} do not hold in general.
In fact, there is an  example that shows this simultaneously.

\section{Representation theorems for semibounded forms}\label{repr}

Let $\st \in \bF(\sH)$ be a semibounded form.
If $\st$ is closed then the well-known representation theorem (see \cite{BHS}, \cite{Kato})
asserts that there is a semibounded selfadjoint relation $H$ in $\sH$
with the same lower bound as $\st$, such that for all elements $\{\varphi, \varphi'\} \in H$
\[
\st[\varphi, \psi]=(  \varphi',  \psi)
\quad \mbox{for all} \quad  \psi \in \dom \st.
\]
In this section it will be shown that for an arbitrary semibounded form $\st \in \bF(\sH)$
a similar observation can be made when the above identity is
interpreted in a limiting sense.
 Moreover, the regular part $\st_{\rm reg}$ of $\st$ is represented by the same semibounded selfadjoint
relation. This version of the representation of semibounded forms goes back
to Arendt and ter Elst (in the setting of sectorial forms); see
\cite[Theorem~3.2, Proposition~3.10]{AtE1}. The representing map for $\st \in \bF(\sH)$
allows a simple and direct exposition of the arguments.

\begin{theorem}\label{thm1}
Let the form $\st \in \bF(\sH)$ be semibounded.
Then the following statements hold:
\begin{enumerate} \def\labelenumi{\rm(\Alph{enumi})}
\item
There exists a semibounded relation $S_\st \in \bL(\sH)$
which is bounded from below by $m(\st)$, such that
$\dom S_\st \subset \dom \st$ and for each $\{\varphi, \varphi'\} \in S_\st$
 \begin{equation}\label{ppp0}
\st[\varphi, \psi]=(  \varphi',  \psi)
\quad \mbox{for all} \quad  \psi \in \dom \st.
\end{equation}
 If $H \in \bL(\sH)$ is any symmetric relation, such that
 $\dom H \subset \dom \st$ and for each $\{\varphi, \varphi'\} \in H$
 the statement \eqref{ppp0} holds,
 then $H\subset S_\st$.

\item
There exists a semibounded selfadjoint extension $\wt A_{\st} \in \bL(\sH)$ of $S_\st$
which is bounded from below by $m(\st)$,
such that for each $\{\varphi, \varphi'\} \in \wt A_{\st}$
there exists a sequence $\varphi_n \in \dom \st$ for which
\begin{equation}\label{aa1}
\varphi_n \to_\st \varphi  \quad \mbox{and} \quad
\st[\varphi_n,\psi] \to (\varphi',\psi) \quad \mbox{for all} \quad \psi\in\dom \st.
\end{equation}
If $H \in \bL(\sH)$ is any selfadjoint relation, such that
for each $\{\varphi, \varphi'\}\in H$
there exists a sequence $\varphi_n\in\dom \st$ for which
the statement \eqref{aa1} holds, then $H=\wt A_\st$.

\item
For all $c \leq m(\st)$ the symmetric relation $S_\st$  and
the selfadjoint relation $\wt A_\st$ admit the representations
\begin{equation}\label{aq0}
 S_\st=Q_c^*Q_c+c \quad \mbox{and} \quad \wt A_\st=Q_c^*Q_c^{**}+c,
\end{equation}
where $Q_c \in \bL(\sH,\sK)$ is a representing map for $\st-c$.
In particular, the relations $S_\st$ and $\wt A_\st$
do not depend
on the choice of $c\leq m(\st)$ and the representing map $Q_c$.
 \end{enumerate}
\end{theorem}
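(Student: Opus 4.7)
The plan is to set $S_\st := Q_c^{*} Q_c + c$ and $\wt A_{\st} := Q_c^{*} Q_c^{**} + c$ for a fixed $c \leq m(\st)$, verify parts (A) and (B) directly from these explicit formulas, and then deduce (C) from the fact that the uniqueness assertions in (A) and (B) force $S_\st$ and $\wt A_\st$ to be intrinsic, independent of the choice of $c$ and of the representing map $Q_c$.

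For (A), I would just unwind the adjoint. By \eqref{uksi}, the identity \eqref{ppp0} is the same as $(Q_c\varphi, Q_c\psi) = (\varphi' - c\varphi, \psi)$ for every $\psi \in \dom Q_c$, which is precisely $\{Q_c\varphi, \varphi' - c\varphi\} \in Q_c^{*}$, or equivalently $\{\varphi, \varphi'\} \in Q_c^{*}Q_c + c = S_\st$. This single equivalence delivers everything in (A): existence of $S_\st$ satisfying \eqref{ppp0}, the lower bound $m(\st)$ (since $(\varphi', \varphi) = \st[\varphi] \geq m(\st)\|\varphi\|^2$), and the maximality of $S_\st$ among symmetric relations satisfying \eqref{ppp0}.

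For the existence in (B), I would invoke the classical fact that $Q_c^{*}Q_c^{**}$ is a nonnegative selfadjoint relation in $\sH$ for any $Q_c \in \bL(\sH,\sK)$, so $\wt A_\st$ is selfadjoint and extends $S_\st$ by $Q_c \subset Q_c^{**}$. Given $\{\varphi, \varphi'\} \in \wt A_\st$, there is some $\eta \in Q_c^{**}\varphi$ with $\{\eta, \varphi' - c\varphi\} \in Q_c^{*}$. I would realize $\eta$ as a limit $Q_c\varphi_n$ along $\varphi_n \in \dom Q_c$ with $\varphi_n \to \varphi$: start from any sequence $\psi_n \to \varphi$ with $Q_c\psi_n \to \eta_0$ for some $\eta_0 \in Q_c^{**}\varphi$ (available since $\varphi \in \dom Q_c^{**}$) and correct by a null sequence $\chi_n \to 0$ in $\dom Q_c$ with $Q_c\chi_n \to \eta - \eta_0 \in \mul Q_c^{**}$. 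Then $\varphi_n := \psi_n + \chi_n$ satisfies $\varphi_n \to_\st \varphi$ (because $Q_c\varphi_n$ is Cauchy) and $\st[\varphi_n, \psi] = c(\varphi_n, \psi) + (Q_c\varphi_n, Q_c\psi) \to c(\varphi, \psi) + (\eta, Q_c\psi) = (\varphi', \psi)$, which is \eqref{aa1}.

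For the uniqueness in (B), given any selfadjoint $H \in \bL(\sH)$ with property \eqref{aa1}, any $\{\varphi, \varphi'\} \in H$ with its approximating sequence $\varphi_n \to_\st \varphi$ yields a Cauchy $Q_c\varphi_n$ converging to some $\eta$ with $\{\varphi, \eta\} \in Q_c^{**}$, and passing to the limit in \eqref{aa1} gives $(\eta, Q_c\psi) = (\varphi' - c\varphi, \psi)$, i.e., $\{\eta, \varphi' - c\varphi\} \in Q_c^{*}$, hence $\{\varphi, \varphi'\} \in \wt A_\st$; since both $H$ and $\wt A_\st$ are selfadjoint this forces $H = \wt A_\st$. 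The lower bound follows by testing $(\varphi' - c\varphi, \varphi) = \lim_n (\varphi' - c\varphi, \varphi_n) = \lim_n (\eta, Q_c\varphi_n) = \|\eta\|^2 \geq 0$, valid for every $c \leq m(\st)$, so $(\varphi', \varphi) \geq m(\st)\|\varphi\|^2$. Part (C) is then immediate: the intrinsic characterizations in (A) and (B) are independent of $c$ and of $Q_c$, while the concrete formulas \eqref{aq0} come from the constructions above. The main technical obstacle I anticipate is the careful bookkeeping of multivaluedness of $Q_c^{**}$ in the existence step of (B), namely verifying that each prescribed $\eta \in Q_c^{**}\varphi$ is attainable as a limit $Q_c\varphi_n$ through a correction by null sequences representing the required element of $\mul Q_c^{**}$.
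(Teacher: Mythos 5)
Your proposal is correct and follows essentially the same route as the paper: define $S_\st = Q_c^*Q_c + c$ and $\wt A_\st = Q_c^*Q_c^{**} + c$, verify (A) and (B) by unwinding the adjoint and passing to limits, and deduce (C) from the intrinsic characterizations established in (A) and (B).

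One clarification worth making: the ``main technical obstacle'' you flag in the existence step of (B) --- realizing a prescribed $\eta$ with $\{\varphi,\eta\}\in Q_c^{**}$ as a limit $Q_c\varphi_n$ via a two-stage argument (first some $\psi_n \to \varphi$ giving $\eta_0$, then a correcting null sequence $\chi_n$ with $Q_c\chi_n \to \eta-\eta_0 \in \mul Q_c^{**}$) --- is not an obstacle at all. The relation $Q_c^{**}$ is, by definition, the closure of the graph of $Q_c$ in $\sH\times\sK$, so for the given pair $\{\varphi,\eta\}\in Q_c^{**}$ there already exists, directly from the definition, a sequence $\varphi_n \in \dom Q_c$ with $\varphi_n \to \varphi$ and $Q_c\varphi_n \to \eta$; this is exactly what the paper uses. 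The correction step is thus superfluous. Moreover, the element $\eta$ is in fact uniquely determined: since $\{\eta, \varphi'-c\varphi\}\in Q_c^*$ forces $\eta \in \dom Q_c^* = (\mul Q_c^{**})^\perp$, and any two choices differ by an element of $\mul Q_c^{**}$, the difference vanishes, so $\eta = (Q_{c,\rm reg})^{**}\varphi$. There is no multivaluedness to bookkeep. Finally, note that in (B) your derivation of the lower bound ``$\geq m(\st)$'' by quantifying over all $c \leq m(\st)$ implicitly uses the independence of $\wt A_\st$ from $c$, which you only establish in (C); the cleanest fix is simply to take $c = m(\st)$ from the outset, which is permitted.
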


\begin{proof}
\textrm{(A)}
Fix  $c \leq m(\st)$ and let $Q_c \in \bL(\sH, \sK)$
be some representing map for $\st-c$:
\begin{equation}\label{qqq}
 \st[\varphi, \psi]=(Q_c \varphi, Q_c \psi)+c(\varphi, \psi),
 \quad \varphi, \psi \in \dom Q_c= \dom \st.
\end{equation}
Define $S_c=Q_c^{*}Q_c+c$. Then clearly $S_c \geq c$ and $\dom S_c \subset \dom \st.$

Now let $\{\varphi, \varphi'\} \in Q_c^*Q_c$.
Then $\{Q_c\varphi, \varphi'\} \in Q_c^*$ and
if $\{\psi, Q_c \psi\} \in Q_c$, then
$(Q_c \varphi, Q_c \psi)=(\varphi', \psi)$. From \eqref{qqq} one gets
\[
 \st[\varphi, \psi]=( \varphi',  \psi)+c(\varphi, \psi)=( \varphi'+c\varphi, \psi).
\]
Therefore, $S_c=Q_c^{*}Q_c+c$ satisfies \eqref{ppp0}.

Next let $H$ with $\dom H \subset \dom \st$ satisfy \eqref{ppp0}.
If $\{\varphi, \varphi'\} \in H$ and $\psi \in \dom \st$ then using \eqref{qqq} one obtains
\[
(\varphi', \psi)=(Q_c\varphi, Q_c \psi)+c(\varphi, \psi) \;\Leftrightarrow\;
(\varphi' -c \varphi, \psi)=(Q_c\varphi, Q_c \psi).
\]
Since this holds for all $\psi \in \dom Q_c$, one concludes that
\[
\{Q_c \varphi, \varphi' - c\varphi\} \in Q_c^*
\quad \mbox{or} \quad \{\varphi, \varphi' -c \varphi\} \in Q_c^*Q_c.
\]
Therefore, $H-c \subset Q_c^*Q_c$ $\Leftrightarrow$ $H\subset S_c$. 

\medskip

\textrm{(B)}
Define $A_c=Q_c^*Q_c^{**}+c$ and let $\{\varphi, \varphi'\}\in A_c$,
so that $\{\varphi, \varphi'-c \varphi\}\in Q_c^*Q_c^{**}$.
Then
$\{\varphi,g\}\in Q_c^{**}$ and $\{g, \varphi'-c \varphi \}\in Q_c^{*}$
for some $g\in \sK_c$.
Hence there exists a sequence $\varphi_n\in \dom \st$
such that $\varphi_n\to \varphi$ in $\sH$
and $Q_c \varphi_n\to g$ in $\sK_c$.
Then $(Q_c \varphi_n)$ is a Cauchy sequence and
\[
 \st[\varphi_n-\varphi_m]
 =\|Q_c (\varphi_n-\varphi_m)\|_{\sK_c}^2 +c \|\varphi_n-\varphi_m\|_{\sH}^2 \to 0,
 \quad n,m\to \infty.
\]
Thus, $\varphi_n \to_\st \varphi$.
On the other hand, for all $\psi\in \dom \st$ one has
\begin{equation}\label{eq011}
 \lim_{n\to \infty}(\st-c)[\varphi_n,\psi]
 =\lim_{n\to \infty}(Q_c \varphi_n, Q_c \psi)_{\sK_c}=(g,Q_c\psi)_{\sK_c}.
\end{equation}
It follows from $\{g,\varphi' -c \varphi\}\in Q_c^{*}$ and $\psi\in \dom Q_c$ that
\begin{equation}\label{eq0111}
 (g,Q_c\psi)_{\sK_c}=(\varphi'-c \varphi,\psi)_{\sH}.
\end{equation}
By combining \eqref{eq011} and \eqref{eq0111}, one obtains  for all $\psi\in\dom\st$,
\[
  \lim_{n\to \infty}\st[\varphi_n,\psi]
  =(\varphi'-c \varphi,\psi)_\sH+c\lim_{n\to \infty}(\varphi_n, \psi)_{\sH}=(\varphi',\psi)_{\sH}.
\]
Thus $A_c=Q_c^*Q_c^{**}+c$ satisfies \eqref{aa1}.

Next, let $H=H^*$ be such that for each $\{\varphi, \varphi'\}\in H$
there exist $\varphi_n\in\dom \st$ satisfying \eqref{aa1}.
Then $\varphi_n \to_\st \varphi$ implies that
\[
\varphi_n \to \varphi \quad \mbox{and} \quad
\|Q_c (\varphi_n-\varphi_m)\|_{\sK_c}
\to 0.
\]
Thus $(Q_c \varphi_n)$ is a Cauchy sequence in $\sK_c$ and
$Q_c \varphi_n  \to g$ for some $g \in \sK_c$.
Hence,
$\{\varphi,g\}=\lim_{n\to\infty}\{\varphi_n,Q_c \varphi_n\}\in Q_c^{**}$.
Now from the second condition in \eqref{aa1}
one gets for all $\psi\in\dom \st$
\[
 \lim_{n\to \infty}(\st-c)[\varphi_n,\psi]
 =(\varphi',\psi)_\sH - c\lim_{n\to \infty}(\varphi_n, \psi)_{\sH}
 =(\varphi'-c \varphi,\psi)_{\sH}.
\]
Thus for all $\psi\in\dom\st=\dom Q_c$,
\[
 (g,Q_c\psi)_{\sK_c}=\lim_{n\to \infty}(Q_c \varphi_n, Q_c \psi)_{\sK_c}
 =\lim_{n\to \infty}(\st-c)[\varphi_n,\psi]=(\varphi'-c \varphi,\psi)_{\sH}.
\]
Hence $\{g,\varphi'-c\varphi\}\in Q_c^*$ and, since $\{\varphi,g\}\in Q_c^{**}$,
one has
\[
\{\varphi, \varphi'-c \varphi\}\in Q_c^{*}Q_c^{**}.
\]
Thus $H-c \subset Q_c^{*}Q_c^{**}$
or, equivalently, $H \subset A_c$. Since both relations are selfadjoint,
this gives $H=A_c$. 

\medskip

\textrm{(C)}
The proofs of (A) and (B) show that $S_c=Q_c^*Q_c+c$ and $A_c=Q_c^*Q_c^{**}+c$
satisfy the desired conditions with a choice of $c\leq m(\st)$
and a representing map $Q_c$ for $\st-c$.

On the other hand, if $Q_c'=V Q_c$ with a unitary map $V$ from $\cran Q_c$ onto $\cran Q_c'$, then
\[
Q_c^*Q_c+c = (Q_c')^*Q_c'+c; \quad Q_c^*Q_c^{**}+c = (Q_c')^*(Q_c')^{**}+c.
\]
Thus, $S_c$ and $A_c$ do not depend on the choice of $Q_c$.

Finally, if $c'\leq \gamma$ and $Q_{c'}$
is some representing map for $\st-c'$, then the proof of (A) shows that
\[
 S_{c'}=Q_{c'}^*Q_{c'}+c'
\]
also satisfies the conditions in (A) and, thus, $S_{c'}\subset S_c$.
By symmetry, one also has $S_c\subset S_{c'}$. Therefore, $S_{c'}=S_c$.
This proves that $S_\st$ in \eqref{aq0} does not depend on $c \leq m(\st)$.
In particular, $S_\st=S_c \geq c$ for each $c \leq m(\st)$,
so that $S_\st \geq m(\st)$. A similar reasoning applies to $A_c$:
$\wt A_\st=Q_c^*Q_c^{**}+c$ for all $c \leq m(\st)$ and thus
$\wt A_\st \geq m(\st)$.

This completes the proof.
\end{proof}

According to Theorem \ref{thm1} any semibounded form $\st \in \bF(\sH)$
 generates a semibounded (symmetric) relation $S_\st$
and a semibounded selfadjoint relation $\wt A_\st$ in $\bL(\sH)$.
Moreover, it follows from \eqref{aq0} that they satisfy the following identities:
\begin{equation}\label{muldom}
 \mul S_\st=  (\dom \st)^\perp, \quad  \mul \wt A_\st=  (\dom \st)^\perp,
 \end{equation}
The semibounded relation $\wt A_\st$
in Theorem \ref{thm1} is selfadjoint with $\cdom \wt A_\st =\cdom \st$;
its orthogonal operator part $(\wt A_\st)_{\rm reg}$
 is given by $(\wt A_\st)_{\rm reg}=(I-P_{\rm reg}) \wt A_\st$,
where $P_{\rm reg}$ is the orthogonal projection
 from $\sH$ onto $\mul \wt A_\st$.
Therefore, in \eqref{aa1} one may replace $(\varphi', \psi)$
by  $((\wt A_\st)_{\rm reg} \varphi, \psi)$,
so that \eqref{aa1} reads
\[
\varphi_n \to_\st \varphi  \quad \mbox{and}
\quad \st[ \varphi_n, \psi] \to  ((\wt A_\st)_{\rm reg} \varphi, \psi)
\quad \mbox{for all} \quad  \psi \in \dom \st,
\]
where $(\wt A_\st)_{\rm reg}$ is a semibounded selfadjoint operator
in $\sH \ominus \mul \wt A_\st$.
In general, the inclusion
$S_\st \subset \wt A_\st$ is not an equality:
for instance, this is the case when the form $\st$ is defined by a semibounded
relation. 
In the context of closed and closable forms the main result leads to more special
results which will be discussed now.
A further discussion of the main theorem and its connections  to the case of closed
or closable forms can be found in Theorem \ref{terelst} below.

\medskip

When the form $\st \in \bF(\sH)$ in Theorem \ref{thm1} is closed
then the statements  simplify:
the relations $S_{\st}$ and $\wt A_{\st}$ coincide and will be denoted by $A_{\st}$.
Thus Theorem \ref{thm1} leads to the so-called
first representation theorem as a straightforward consequence;
cf. \cite{BHS}, \cite{Kato}.

\begin{corollary}\label{friedr}
Let the form $\st \in \bF(\sH)$ be semibounded and closed.
 Then there exists a semibounded
selfadjoint relation $A_\st \in \bL(\sH)$ with lower bound $m(\st)$,
such that $\dom A_\st \subset \dom \st$ and for each $\{\varphi, \varphi'\} \in A_\st$
 \begin{equation}\label{ppp}
\st[\varphi, \psi]=(  \varphi',  \psi)
 \quad \mbox{for all} \quad \psi \in \dom \st.
\end{equation}
If $H \in \bL(\sH)$ is a selfadjoint relation, such that
$\dom H \subset \dom \st$ and for each $\{\varphi, \varphi'\} \in H$
the statement \eqref{ppp} holds,
 then $H=A_\st$.
 Moreover, for all $c \leq m(\st)$ the selfadjoint relation $A_\st$ is given by
\begin{equation}\label{aq}
A_\st=Q_c^*Q_c+c,
\end{equation}
where the representing map $Q_c \in \bL(\sH,\sK)$ for $\st-c$ is closed.
\end{corollary}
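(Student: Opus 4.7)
The plan is to deduce this corollary as a direct specialization of Theorem \ref{thm1} to the closed case, with almost no extra work. The pivotal observation is that when $\st \in \bF(\sH)$ is closed, Lemma \ref{repform0} guarantees that any representing map $Q_c$ for $\st - c$ is itself a closed linear operator, i.e., $Q_c = Q_c^{**}$. Substituting this identity into the two formulas in \eqref{aq0} immediately collapses $S_\st = Q_c^*Q_c + c$ and $\wt A_\st = Q_c^*Q_c^{**} + c$ into a single relation, which I would then name $A_\st$.

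With this identification in hand, the existence part is essentially a reading-off from Theorem \ref{thm1}. The relation $A_\st$ is selfadjoint because it coincides with $\wt A_\st$; the lower bound $m(\st)$ comes from part (C); the inclusion $\dom A_\st \subset \dom \st$ together with the defining identity \eqref{ppp} for every $\{\varphi, \varphi'\} \in A_\st$ is precisely what part (A) asserts for $S_\st$; and the representation \eqref{aq} is just the first formula of \eqref{aq0}. The independence of $A_\st$ from the choice of $c \leq m(\st)$ and of the particular representing map $Q_c$ is also contained in part (C).

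For the uniqueness assertion I would take an arbitrary selfadjoint relation $H \in \bL(\sH)$ with $\dom H \subset \dom \st$ for which \eqref{ppp} holds for each $\{\varphi, \varphi'\} \in H$. Since selfadjointness implies symmetry, the maximality clause in part (A) of Theorem \ref{thm1} yields the inclusion $H \subset S_\st = A_\st$. To upgrade this to equality I would take adjoints: $A_\st = A_\st^* \subset H^* = H$, so $H = A_\st$.

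Honestly, I do not expect a real obstacle here, since every component of the corollary is packaged inside Theorem \ref{thm1} once $Q_c$ is closed. The only step that deserves a sentence of justification is the collapse $S_\st = \wt A_\st$, whose entire content is the equivalence between closedness of $\st$ and closedness of $Q_c$ recorded in Lemma \ref{repform0}.
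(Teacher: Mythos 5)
Your proposal is correct and matches the paper's own (very terse) treatment: the paper simply remarks that when $\st$ is closed, $S_\st$ and $\wt A_\st$ coincide and are denoted $A_\st$, leaving exactly the details you fill in. Your use of Lemma \ref{repform0}(b) to get $Q_c = Q_c^{**}$, the resulting collapse of the two formulas in \eqref{aq0}, and the adjoint argument $H \subset S_\st = A_\st \Rightarrow A_\st = A_\st^* \subset H^* = H$ for uniqueness are all sound.
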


Note that it follows from \eqref{muldom} or \eqref{aq} that the following identities
\[
\mul A_\st= (\dom \st)^\perp
\quad \mbox{and} \quad \cdom A_\st= \cdom \st
\]
hold.
Recall that the identity \eqref{ppp} can be written
for each $\varphi \in \dom A_\st$ as
\[
\st[ \varphi, \psi]= ((A_\st)_{\rm reg} \varphi, \psi)
\quad \mbox{for all} \quad  \psi \in \dom \st,
\]
where the orthogonal operator part $(A_\st)_{\rm reg}$
is a semibounded selfadjoint operator in $\sH \ominus \mul A_\st$.
It is now simple to see that
any semibounded selfadjoint relation in a Hilbert space $\sH$ is connected
to a closed semibounded form; cf. \cite{BHS}.

\begin{lemma}\label{converse}
Let $A \in \bL(\sH)$ be a selfadjoint relation 
with lower bound $\gamma \in \dR$
and let $c \leq \gamma$. Then the form $\st \in \bF(\sH)$, defined by
\[
 \dom \st=\dom (A_{\rm reg}-c)^\half,
\]
and
\[
 \st[\varphi, \psi] =((A_{\rm reg}-c)^\half \varphi, (A_{\rm reg}-c)^\half \psi) +c (\varphi, \psi),
 \quad
 \varphi, \psi \in \dom \st_A,
\]
is closed, independent of $c \leq \gamma$, and has lower bound $m(\st)=\gamma$.
Moreover, the unique semibounded selfadjoint relation in $\bL(\sH)$ associated with $\st$ is $A$.
\end{lemma}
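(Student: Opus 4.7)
The plan is to realize $\st$ as the form associated with a closed representing map and then invoke Corollary~\ref{friedr}. Set $T=(A_{\rm reg}-c)^{\half}$, a densely defined nonnegative closed selfadjoint operator in $\sH\ominus\mul A$, and view $T$ as an operator $Q_c\in\bL(\sH,\sH\ominus\mul A)$ with $\dom Q_c=\dom T$. Its graph sits inside $(\sH\ominus\mul A)\times(\sH\ominus\mul A)$, which is closed in $\sH\times(\sH\ominus\mul A)$, so $Q_c$ is closed. By construction $\st[\varphi,\psi]-c(\varphi,\psi)=(Q_c\varphi,Q_c\psi)$, so $Q_c$ is a closed representing map for $\st-c$, and Lemma~\ref{repform0} yields that $\st$ is closed.

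For independence of $c\leq\gamma$, write $A_{\rm reg}-c=(A_{\rm reg}-\gamma)+(\gamma-c)$ and use functional calculus: $\dom(A_{\rm reg}-c)^{\half}=\dom(A_{\rm reg}-\gamma)^{\half}$ for every $c\leq\gamma$, since $\int(\lambda-c)\,d(E_\lambda\varphi,\varphi)$ and $\int(\lambda-\gamma)\,d(E_\lambda\varphi,\varphi)$ differ only by the finite constant $(\gamma-c)\|\varphi\|^2$. On the core $\dom A_{\rm reg}$ of this form domain one computes $\st[\varphi,\psi]=((A_{\rm reg}-c)\varphi,\psi)+c(\varphi,\psi)=(A_{\rm reg}\varphi,\psi)$, which is manifestly $c$-independent, and the core property extends the identity to all of $\dom\st$. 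The lower bound is immediate from $\st[\varphi]-c\|\varphi\|^2=\|T\varphi\|^2\geq(\gamma-c)\|\varphi\|^2$ (using $\dom\st\subset\sH\ominus\mul A$ and $A_{\rm reg}\geq\gamma$), with equality approached via spectral projections of $A_{\rm reg}$ onto arbitrarily small intervals near $\gamma$; hence $m(\st)=\gamma$.

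Finally, by Corollary~\ref{friedr}, $A_\st=Q_c^*Q_c+c$. For the adjoint $Q_c^*\in\bL(\sH\ominus\mul A,\sH)$, decompose $h\in\sH$ as $h=h_{\rm op}+h_{\rm mul}$ with $h_{\rm op}\in\sH\ominus\mul A$ and $h_{\rm mul}\in\mul A$; the defining identity $(k,Q_c\varphi)=(h,\varphi)$ for all $\varphi\in\dom T\subset\sH\ominus\mul A$ reduces to $(k,T\varphi)=(h_{\rm op},\varphi)$ and leaves $h_{\rm mul}$ unconstrained. Selfadjointness of $T$ in $\sH\ominus\mul A$ then gives $Q_c^*=\{\{k,Tk+m\}:k\in\dom T,\ m\in\mul A\}$, and composition produces $Q_c^*Q_c=\{\{\varphi,(A_{\rm reg}-c)\varphi+m\}:\varphi\in\dom A_{\rm reg},\ m\in\mul A\}$. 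Adding $c$ recovers $A$ exactly. The main obstacle is recognizing that the multi-valued part of $A$ does not enter through $Q_c$ (which is an operator) but through $\mul Q_c^*=(\dom Q_c)^\perp=\mul A$ in $\sH$, since $\dom Q_c$ is dense in $\sH\ominus\mul A$ but has orthogonal complement $\mul A$ in $\sH$; overlooking this would recover only $A_{\rm reg}$ instead of the full selfadjoint relation $A$.
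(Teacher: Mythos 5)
Your proof is correct and takes essentially the same approach as the paper, but fills in the details the paper leaves implicit. The paper's proof is a two-sentence sketch: it observes that $(A_{\rm reg}-c)^{1/2}$ maps into $\sH\ominus\mul A$ with dense range, and that for $\varphi\in\dom A$ one has $\st[\varphi,\psi]=(A_{\rm reg}\varphi,\psi)$, leaving the reader to conclude from Corollary~\ref{friedr} that $A=A_\st$. You make the same identification explicit by directly computing $Q_c^*$ and $Q_c^*Q_c$, and in particular your final remark correctly isolates the point that makes the lemma work: $\mul A$ re-enters not through $Q_c$ (an operator) but through $\mul Q_c^*=(\dom Q_c)^\perp=\mul A$, since $\dom Q_c$ is dense only in $\sH\ominus\mul A$. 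The closedness argument (graph of $Q_c$ sits inside the closed subspace $(\sH\ominus\mul A)\times(\sH\ominus\mul A)$), the $c$-independence via spectral calculus, and the identification of the lower bound are all sound.
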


\begin{proof}
Note that the mapping, defined by
\[
\varphi \mapsto (A_{\rm reg}-c)^\half \varphi,
\quad  \varphi \in \dom (A_{\rm reg}-c)^\half,
\]
takes $\dom (A_{\rm reg}-c)^\half$ into the Hilbert space
$\sH \ominus \mul A$ with dense range.
When $\varphi \in \dom A$, it follows that
$\st[\varphi, \psi]=(A_{\rm reg} \varphi, \psi)$.
\end{proof}

In Corollary \ref{friedr} it has been shown that for every closed
semibounded form $\st \in \bF(\sH)$
there is a unique semibounded selfadjoint relation $H=A_\st$
in $\bL(\sH)$ such that \eqref{ppp} holds.
Furthermore, according to Lemma \ref{converse},
for each semibounded selfadjoint relation $H \in \bL(\sH)$
there is a closed semibounded form $\st \in \bF(\sH)$
such that $H=A_\st$ holds. In the following, the notation
$\st(H)$ is used to denote this one-to-one correspondence.

\medskip

Let $\st_1 \in \bF(\sH)$ and $\st_2 \in \bF(\sH)$ be semibounded  forms
with representations
\[
\left\{
\begin{array}{l}
 \st_1[\varphi, \psi]=c(\varphi, \psi)+(Q_1 \varphi, Q_1 \psi), \quad \varphi, \psi \in \dom \st_1=\dom Q_1, \\
 \st_2[\varphi, \psi]=c(\varphi, \psi)+(Q_2 \varphi, Q_2 \psi), \quad \varphi, \psi \in \dom \st_2=\dom Q_2,
\end{array}
\right.
\]
with representing maps  $Q_1 \in \bL(\sH, \sK_1)$ and $Q_2 \in \bL(\sH, \sK_2)$.
Recall that $\st_1 \leq \st_2$ if and only if $Q_1 \prec_c Q_2$; see \eqref{ineq0}, \eqref{prec}, and \eqref{prec1}.
Now assume, in addition, that $\st_1$ and $\st_2$ are closed forms or, equivalently, that $Q_1$ and $Q_2$ are
closed operators, so that the corresponding semibounded selfadjoint relations $H_1$ and $H_2$ in $\bL(\sH)$
are given by
\[
H_1=Q_1^* Q_1+c, \quad H_2=Q_2^* Q_2+c.
\]
Recall from \cite[Definition 5.2.8]{BHS} that two nonnegative relations
$H_1$ and $H_2$ in $\bL(\sH)$ satisfy $H_1 \leq H_2$ when
\[
\dom H_2^\half \subset \dom H_1^\half \quad \mbox{and} \quad
\|(H_{1, {\rm reg}})^\half f\| \leq \|(H_{2, {\rm reg}})^\half\|, \,\, f \in \dom H_2^\half.
\]
According to \eqref{ineq0} and \cite[Theorem 2.2]{HS2023seq1} one has the equivalent statements
\[
\st_1 \leq \st_2 \quad \Leftrightarrow \quad Q_1 \prec_c Q_2 \quad \Leftrightarrow \quad H_1 \leq H_2.
\]
In particular, one has  for semibounded selfadjoint relations
$H$ and $K$ in $\bL(\sH)$ that $H \leq K$ is equivalent to
$\st(H) \leq \st(K)$ (here the above notation has been used).

\medskip

When the form $\st \in \bF(\sH)$  in Theorem \ref{thm1}
is closable the results simplify  in the sense that the description in (B)
of that theorem can be stated directly.
The result is straightforward; the main tool is Lemma \ref{closure}
in conjunction with the representations \eqref{aq0} in Theorem \ref{thm1}.

\begin{corollary}\label{closable}
Let the form $\st \in \bF(\sH)$ be semibounded and closable.
Let $\bar \st \in \bF(\sH)$ be the closure of $\st$.
Then the semibounded selfadjoint relation $A_{\bar \st} \in \bL(\sH)$
is an extension of $S_\st$ {\rm (}introduced in Theorem {\rm \ref{thm1}}{\rm )}
and it has the same lower bound $m(\st)$.
For all $c \leq m(\st)$ the semibounded relation $S_\st$ and
the selfadjoint relation $A_{\bar \st}$ admit the representations
\[
 S_\st=Q_c^*Q_c+c \quad \mbox{and} \quad A_{\bar \st}=Q_c^*Q_c^{**}+c,
\]
where $Q_c \in \bL(\sH,\sK)$ is a representing map for $\st-c$.
In particular, $A_{\bar \st}$ coincides
with the selfadjoint relation $\wt A_{\st}$ in Theorem~{\rm \ref{thm1}}.
\end{corollary}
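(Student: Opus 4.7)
The plan is to reduce the corollary to Corollary \ref{friedr} applied to the closure $\bar\st$, using the explicit form of the representing map of $\bar\st$ given in Lemma \ref{closure}, and then to compare the resulting representation with the formulas for $S_\st$ and $\wt A_\st$ supplied by Theorem \ref{thm1}(C).

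Fix $c\leq m(\st)$ and let $Q_c\in\bL(\sH,\sK)$ be a representing map for $\st-c$. Since $\st$ is closable, $Q_c$ is closable by Lemma \ref{repform0}, so $Q_c^{**}$ is a closed linear operator. The first step is to invoke Lemma \ref{closure}, which asserts that $\bar\st$ has the representation
\[
 \bar\st[\varphi,\psi]-c(\varphi,\psi)=(Q_c^{**}\varphi,Q_c^{**}\psi),\quad
 \varphi,\psi\in\dom\bar\st=\dom Q_c^{**}.
\]
Thus $Q_c^{**}$ itself serves as a (closed) representing map for the closed form $\bar\st-c$, and $\bar\st$ has the same lower bound $m(\st)$ as $\st$, so in particular $c\leq m(\bar\st)=m(\st)$.

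The second step is to apply Corollary \ref{friedr} to the closed semibounded form $\bar\st$ with the closed representing map $Q_c^{**}$. This gives a unique semibounded selfadjoint relation $A_{\bar\st}\in\bL(\sH)$ with lower bound $m(\bar\st)=m(\st)$, together with the explicit representation
\[
 A_{\bar\st}=(Q_c^{**})^*Q_c^{**}+c=Q_c^*Q_c^{**}+c,
\]
where the last equality uses $(Q_c^{**})^*=Q_c^*$. Combined with the formula $S_\st=Q_c^*Q_c+c$ from Theorem \ref{thm1}(C), the inclusion $Q_c\subset Q_c^{**}$ yields $Q_c^*Q_c\subset Q_c^*Q_c^{**}$, hence $S_\st\subset A_{\bar\st}$. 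This proves the stated representations and the extension property.

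The final step is the comparison with $\wt A_\st$. Theorem \ref{thm1}(C) provides the identity $\wt A_\st=Q_c^*Q_c^{**}+c$ for every $c\leq m(\st)$, and therefore $A_{\bar\st}=\wt A_\st$, as claimed. There is no real obstacle here: the only point requiring care is the identification $(Q_c^{**})^*=Q_c^*$ together with the fact that the representation formulas of Theorem \ref{thm1}(C) are independent of the choice of $c$ and $Q_c$, which is why matching $\wt A_\st$ with $A_{\bar\st}$ through a single shared representing map is legitimate.
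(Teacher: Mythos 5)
Your proof is correct and follows exactly the route the paper indicates (the paper leaves the corollary unproved, remarking only that "the main tool is Lemma \ref{closure} in conjunction with the representations \eqref{aq0} in Theorem \ref{thm1}"). You fill in those details faithfully: Lemma \ref{closure} supplies $Q_c^{**}$ as the closed representing map for $\bar\st-c$, Corollary \ref{friedr} gives $A_{\bar\st}=(Q_c^{**})^*Q_c^{**}+c=Q_c^*Q_c^{**}+c$, and comparison with \eqref{aq0} yields both the inclusion $S_\st\subset A_{\bar\st}$ and the identification $A_{\bar\st}=\wt A_\st$.
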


Now return to the general context of Theorem \ref{thm1},
where $\st \in \bF(\sH)$.
Fix  $c \leq m(\st)$ and let $Q_c \in \bL(\sH,\sK)$ be some representing map for $\st-c$,
so that
\begin{equation}\label{qcqcqc}
 \st[\varphi, \psi]=c(\varphi, \psi)+(Q_c \varphi, Q_c \psi),
 \quad \varphi, \psi \in \dom Q_c= \dom \st.
\end{equation}
In the general situation of Theorem \ref{thm1}
the representing map $Q_c$ need not be closable.
However, the product $Q_c^* Q_c^{**}$ makes sense and is
a nonnegative selfadjoint relation in $\bL(\sH)$.
To proceed one turns to the Lebesgue decomposition of $Q_c$:
\[
Q_c=Q_{c, \rm reg}+Q_{c, \rm sing}.
\]
The closable component $Q_{c, \rm reg}$
defines a closable form  $\st_{\rm reg}\in \bF(\sH)$ which in the following will be denoted by
\[
\sr=\st_{\rm reg}.
\]
It is clear that the closable form $\sr$ and its closure ${\bar \sr}$ have the representations
\begin{equation}\label{rr1}
\sr[\varphi, \psi]= c(\varphi, \psi)+(Q_{c,\rm reg} \varphi, Q_{c, \rm reg} \psi),
\quad \varphi, \psi \in \dom \sr=\dom \st,
\end{equation}
and
\begin{equation}\label{rr2}
{\bar \sr}[\varphi, \psi]= c(\varphi, \psi)+((Q_{c,\rm reg})^{**} \varphi, (Q_{c, \rm reg})^{**} \psi),
\quad \varphi, \psi \in \dom \bar \sr.
\end{equation}
Observe that one may also apply Theorem \ref{thm1} to  the closable form $\sr \in \bF(\sH)$; it turns out
that the semibounded selfadjoint relations generated by $\st$ and $\sr$ coincide.

\begin{theorem}\label{terelst}
Let the form $\st \in \bF(\sH)$ be semibounded,
let $c \leq m(\st)$, and let $\st-c$ have a representing map $Q_c \in \bL(\sH,\sK)$,
such that \eqref{qcqcqc} holds.
Let $\sr=\st_{\rm reg}$ so that \eqref{rr1} and \eqref{rr2} are satisfied.
Then the relations $S_{\sr}$ and ${\wt A}_\sr$ {\rm (}corresponding to $S_{\st}$ and ${\wt A}_{\st}$
introduced for the form $\st$ in Theorem~{\rm \ref{thm1})}
are given by
\begin{equation}\label{mmainn}
S_{\sr}=(Q_{c,\rm reg})^*Q_{c, \rm reg}+c
\quad \mbox{and} \quad
\wt A_{\sr}=(Q_{c, \rm reg})^*(Q_{c, \rm reg})^{**}+c,
\end{equation}
and they satisfy
\begin{equation}\label{mmainn+}
S_{\st} \subset S_{\sr} \quad \mbox{and} \quad
\wt A_{\st}=\wt A_{\sr}=A_{\bar \sr}.
\end{equation}
In particular, the relations $S_\sr$ and $A_{\bar \sr}$
in \eqref{mmainn} and \eqref{mmainn+} do not depend
on the choice of $c\leq m(\st)$ and the representing map $Q_c$.
 \end{theorem}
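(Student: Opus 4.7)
The plan is to derive everything by applying Theorem \ref{thm1} and Corollary \ref{closable} to the closable form $\sr=\st_{\rm reg}$ and then comparing the resulting relations with $S_\st$ and $\wt A_\st$ via the algebraic features of the Lebesgue decomposition $Q_c=Q_{c,\rm reg}+Q_{c,\rm sing}$, $Q_{c,\rm reg}=(I-P_0)Q_c$, where $P_0$ is the orthogonal projection of $\sK$ onto $\mul Q_c^{**}$.

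First I would establish \eqref{mmainn}. By \eqref{rr1} the closable form $\sr$ has representing map $Q_{c,\rm reg}$ for $\sr-c$ with $\dom\sr=\dom Q_{c,\rm reg}$. Applying Corollary \ref{closable} (which is available since $\sr$ is closable) directly yields $S_\sr=(Q_{c,\rm reg})^* Q_{c,\rm reg}+c$ and $A_{\bar\sr}=\wt A_\sr=(Q_{c,\rm reg})^*(Q_{c,\rm reg})^{**}+c$, taking into account \eqref{rr2} and the identity $(Q_{c,\rm reg})^{**}=(Q_c^{**})_{\rm reg}=(I-P_0)Q_c^{**}$ recorded in Section \ref{sec31}.

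Next I would prove the inclusion $S_\st\subset S_\sr$. Let $\{\varphi,\varphi'\}\in S_\st=Q_c^*Q_c+c$, so that $Q_c\varphi\in\dom Q_c^*$. The key observation is that $\dom Q_c^*\subset(\mul Q_c^{**})^\perp=\ran(I-P_0)$, which forces $P_0 Q_c\varphi=0$; hence $Q_{c,\rm sing}\varphi=0$ and $Q_c\varphi=Q_{c,\rm reg}\varphi$. For every $\psi\in\dom Q_c$ this gives
\[
(\varphi'-c\varphi,\psi)=(Q_c\varphi,Q_c\psi)=(Q_{c,\rm reg}\varphi,(I-P_0)Q_c\psi)=(Q_{c,\rm reg}\varphi,Q_{c,\rm reg}\psi),
\]
so $Q_{c,\rm reg}\varphi\in\dom(Q_{c,\rm reg})^*$ with $(Q_{c,\rm reg})^*Q_{c,\rm reg}\varphi=\varphi'-c\varphi$, i.e.\ $\{\varphi,\varphi'\}\in S_\sr$. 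A parallel computation, now starting from $\{\varphi,\varphi'\}\in\wt A_\st=Q_c^*Q_c^{**}+c$, proves $\wt A_\st\subset \wt A_\sr$: here one picks $y\in\sK$ with $\{\varphi,y\}\in Q_c^{**}$ and $\{y,\varphi'-c\varphi\}\in Q_c^*$, and the same orthogonality $\dom Q_c^*\perp\mul Q_c^{**}$ identifies $y$ with the unique operator-part representative $(Q_{c,\rm reg})^{**}\varphi=(I-P_0)Q_c^{**}\varphi$; then checking the action on $\psi\in\dom Q_c$ as above puts $\{y,\varphi'-c\varphi\}\in(Q_{c,\rm reg})^*$. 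Since both $\wt A_\st$ and $\wt A_\sr$ are selfadjoint by Theorem \ref{thm1}(B), the inclusion forces equality $\wt A_\st=\wt A_\sr$, and $\wt A_\sr=A_{\bar\sr}$ has already been noted. This gives all of \eqref{mmainn+}.

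Finally, the independence of the expressions in \eqref{mmainn} and \eqref{mmainn+} from the choice of $c\leq m(\st)$ and $Q_c$ is inherited from the analogous statement in Theorem \ref{thm1}(C) applied to $\sr$, once one knows (from the Lebesgue decomposition of $Q_c$) that changing $c$ or $Q_c$ changes $Q_{c,\rm reg}$ only up to a partial isometry on $\cran(I-P_0)$. I expect the main technical obstacle to be the careful bookkeeping in the second paragraph: one must exploit the inclusion $\dom Q_c^*\subset\ran(I-P_0)$ at precisely the right moment to convert the adjoint $Q_c^*$ into $(Q_{c,\rm reg})^*$ and to match the multivalued part of $Q_c^{**}$ with the operator part $(Q_{c,\rm reg})^{**}$ used for $\wt A_\sr$; everything else is a direct translation through Theorem \ref{thm1}.
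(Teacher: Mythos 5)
Your proof is correct and follows essentially the same route as the paper. The one substantive difference is that where the paper cites \cite[Appendix]{HS2023seq1} for the key algebraic facts $Q_c^*Q_c\subset (Q_{c,\rm reg})^*Q_{c,\rm reg}$ and $Q_c^*Q_c^{**}=(Q_{c,\rm reg})^*(Q_{c,\rm reg})^{**}$, you derive them explicitly from the orthogonality $\dom Q_c^*\subset(\mul Q_c^{**})^\perp=\ran(I-P_0)$, and you replace a direct proof of the second equality with a proof of one inclusion followed by the standard observation that a selfadjoint relation contained in a selfadjoint relation must equal it. Both routes work; yours is self-contained. One small inaccuracy worth noting: in your independence argument, the phrase ``changing $c$ or $Q_c$ changes $Q_{c,\rm reg}$ only up to a partial isometry on $\cran(I-P_0)$'' is not quite right when $c$ itself changes (the representing maps for $\st-c$ and $\st-c'$ are genuinely different operators, not partial-isometric transforms of one another); the correct and simpler justification, which you essentially also give, is that $Q_{c,\rm reg}$ is a representing map for the intrinsic form $\sr-c$, so Theorem \ref{thm1}(C) applied to $\sr$ directly gives independence of both $S_\sr$ and $\wt A_\sr$ from $c$ and $Q_c$.
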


\begin{proof}
The inclusion $S_{\st} \subset S_{\sr}$ in \eqref{mmainn+} is equivalent to
$Q_c^*Q_c\subset (Q_{c,\rm reg})^*Q_{c, \rm reg}$ and the equality
$\wt A_{\st}=\wt A_{\sr}$ is equivalent to $Q_c^*Q_c^{**}=(Q_{c, \rm reg})^*(Q_{c, \rm reg})^{**}$.
This last inclusion and equality are both easily established by means of the formula
$Q_{c, \rm reg}=(I-P)Q_c$, where $P$ is the orthogonal projection onto $\mul Q_c^{**}=(\dom Q_c^{*})^\perp$;
for details, see \cite[Appendix]{HS2023seq1}.
The equality $\wt A_{\sr}=A_{\bar \sr}$ holds by Corollary \ref{closable}.
The remaining statements follow from Theorem~\ref{thm1}.
\end{proof}

Closely connected to the topics in this section are the semibounded forms
induced by semibounded operators or relations.
Their representing maps can be used to define the extremal extensions, including
the Friedrichs extension and the Kre\u{\i}n type extension; cf. \cite{AN}, \cite{A88}, \cite{AHSS}, \cite{BHS}, \cite{HMS};
this provides a connection with the work of Sebesty\'en and Stochel and their coworkers \cite{CS}, \cite{PS}, \cite{SS}, \cite{ST},
which will be discussed elsewhere.

\section{Monotone sequences of semibounded forms}\label{mono}

Nondecreasing sequences of semibounded forms
have a limit and, likewise, nonincreasing sequences
of semibounded forms with a common lower bound have a limit;
see for instance \cite{Kato}, \cite{RS1}, \cite{S3}.
In this section the convergence of monotone sequences
of semibounded forms will be considered in connection with
the convergence of the corresponding representing maps
as in Section \ref{repr}.

\medskip

First some general facts are established.
Let $\st_n \in \bF(\sH)$  be a sequence of semibounded forms
whose lower bounds are uniformly bounded:
\[
 \gamma \leq m(t_n) \quad \mbox{for some} \quad \gamma \in \dR,
\]
and let $c \leq \gamma$. Then
there exists a sequence of representing maps
$Q_n \in \bL(\sH, \sK_n)$, where $\sK_n$ are Hilbert spaces,
such that
\begin{equation}\label{seq}
\st_n[\varphi, \psi]=c+(Q_n \varphi, Q_n \psi),
\quad \varphi, \psi \in \dom \st_n=\dom Q_n.
\end{equation}
Conversely, each sequence of linear operators
$Q_n \in \bL(\sH, \sK_n)$
defines via \eqref{seq} a sequence
of semibounded forms $\st_n \in \bF(\sH)$
such that $m(\st_n) \geq c$.
Now assume that the sequence of
semibounded forms $\st_n \in \bF(\sH)$ satisfies
\begin{equation}\label{grijpq0}
\st_m \leq \st_n, \quad m \leq n.
\end{equation}
In this case one can take $\gamma=m(\st_1)$ and the
representing maps $Q_n \in \bL(\sH, \sK_n)$ in \eqref{seq}
satisfy
\begin{equation}\label{ggrijpa}
Q_m \prec_c Q_n, \quad m \leq n.
\end{equation}
Conversely, if $Q_n \in \bL(\sH, \sK_n)$ satisfies \eqref{ggrijpa} then the
semibounded forms $\st_n \in \bF(\sH)$ in \eqref{seq} satisfy \eqref{grijpq0}.
It is clear from \eqref{seq} that
\[
 \bigcap_{n=1}^\infty \dom \st_n=  \bigcap_{n=1}^\infty \dom Q_n,
\]
and, moreover, for an element $\varphi$ in this set one has
\[
 \sup_{n \in \dN} \st_n[\varphi] < \infty
 \quad \Leftrightarrow \quad  \sup_{n \in \dN} \|Q_n \varphi\| < \infty.
\]
Recall that if \eqref{ggrijpa} is satisfied, then there exists a linear operator
$Q \in \bL(\sH,\sK)$, where $\sK$ is a Hilbert space, which  satisfies
  \begin{equation}\label{Grijpq}
 Q_n \prec_c Q \quad \mbox{and} \quad
 \|Q_n \varphi\| \nearrow \|Q \varphi \|, \quad \varphi \in \dom Q,
 \end{equation}
 where $\dom Q$ is given by
\[
  \dom Q=\left\{\varphi \in
  \bigcap_{n \in \dN} \dom Q_n :\, \sup_{n \in \dN} \|Q_n \varphi]\|< \infty \right\},
\]
see \cite[Theorem 5.1]{HS2023seq1}.
The linear operator $Q \in \bL(\sH, \sK)$
serves as a representing map for the semibounded form $\st \in \bF(\sH)$ defined by
\begin{equation}\label{seqq}
\st[\varphi, \psi]=c+(Q \varphi, Q \psi), \quad \varphi, \psi \in \dom \st=\dom Q.
\end{equation}
Hence the following lemma, going back to Simon (see \cite{S3} and \cite{RS1})
is now straightforward.

\begin{lemma}\label{garijp1}
Let $\st_n \in \bF(\sH)$  be a sequence of semibounded forms,
represented  as in \eqref{seq}. Assume that the sequence
satisfies
\begin{equation}\label{grijpq}
\st_m \leq \st_n, \quad m \leq n.
\end{equation}
Then there exists a unique semibounded form $\st \in \bF(\sH)$, represented in \eqref{seqq},
such that
\[
  \dom \st=\left\{\varphi \in
  \bigcap_{n \in \dN} \dom \st_n :\, \sup_{n \in \dN} \st_n[\varphi] < \infty \right\}
\]
and which satisfies
\[
 \st_n \leq \st  \quad \mbox{and} \quad
 \st_n[\varphi] \nearrow \st[\varphi],
 \quad  \varphi \in \dom \st.
\]
Furthermore, let $\,\su \in \bF(\sH)$ be a semibounded form. Then there is the implication
\[
 \st_n \leq \su, \quad n \in \dN \quad \Rightarrow \quad \st \leq \su.
\]
The following statements hold:
\begin{enumerate} \def\labelenumi{\rm(\alph{enumi})}
\item  if \,$\st_n$ is closable for all
$n \in \dN$, then $\st$ is closable;
\item if \,$\st_n$ is closed for all  $n \in \dN$,
then $\st$ is closed.
\end{enumerate}
If  $\st_n$  is a bounded everywhere defined form
for all $n \in \dN$, then $\st$
is a bounded everywhere defined form.
\end{lemma}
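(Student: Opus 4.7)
The plan is to push everything through the representing maps $Q_n$ in \eqref{seq} and invoke the corresponding monotone convergence theorem at the operator level \cite[Theorem~5.1]{HS2023seq1}, already recalled in \eqref{Grijpq}. By \eqref{prec1}, the hypothesis $\st_m \leq \st_n$ is equivalent to $Q_m \prec_c Q_n$; that theorem then supplies an operator
\[
 Q\in \bL(\sH,\sK), \quad \dom Q=\Bigl\{\varphi\in\bigcap_n\dom Q_n:\,\sup_n\|Q_n\varphi\|<\infty\Bigr\},
\]
with $Q_n\prec_c Q$ and $\|Q_n\varphi\|\nearrow\|Q\varphi\|$ on $\dom Q$. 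I would then take $\st$ to be the form defined by \eqref{seqq} from this $Q$; uniqueness of $\st$ (independent of the choice of $Q$) follows from Lemma~\ref{exiuni}. Since $\st_n[\varphi]-c\|\varphi\|^2=\|Q_n\varphi\|^2$, the claimed description of $\dom\st$ is a direct rewriting, and both $\st_n\leq\st$ and the pointwise monotone convergence $\st_n[\varphi]\nearrow\st[\varphi]$ are immediate translations of the operator statements.

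For the universality clause, let $\su \in \bF(\sH)$ be semibounded with $\st_n\leq\su$ for all $n$. I would decrease $c$ if necessary to a common lower bound of the family $\{\st_n\}_n$ and $\su$, and pick a representing map $R$ for $\su-c$. Then $Q_n \prec_c R$ for each $n$ by \eqref{prec1}, so every $\varphi\in\dom R$ lies in each $\dom Q_n$ with $\sup_n\|Q_n\varphi\|\leq\|R\varphi\|<\infty$; hence $\varphi\in\dom Q$ and $\|Q\varphi\|=\lim_n\|Q_n\varphi\|\leq\|R\varphi\|$, i.e.\ $Q\prec_c R$, equivalently $\st\leq\su$.

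For items (a) and (b) I would invoke the preservation of closability and closedness under $\prec_c$-monotone limits in \cite[Theorem~5.1]{HS2023seq1}: the limit $Q$ is closable (resp.\ closed) whenever every $Q_n$ is closable (resp.\ closed). Lemma~\ref{repform0} then transfers these properties from $Q$ to $\st$. For the concluding boundedness statement, if each $\st_n$ is bounded and everywhere defined, then each $Q_n\in\bB(\sH,\sK_n)$ is bounded with $\dom Q_n=\sH$; the pointwise-increasing sequence of bounded operators $Q_n$ then has a bounded, everywhere defined monotone limit $Q$ by the bounded case of \cite[Theorem~5.1]{HS2023seq1} (essentially Banach--Steinhaus applied to the closed sets $E_N=\{\varphi:\sup_n\|Q_n\varphi\|\leq N\}$), whence $\st$ is bounded and everywhere defined via \eqref{seqq}.

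I expect the main obstacle to be the preservation of closability in (a): a $\prec_c$-monotone limit of closable operators requires a careful subsequence argument to control the newly generated multi-valued part of the limit closure, which is most cleanly packaged via the graph-closure description of $Q^{**}$. Granted the cited operator-level \cite[Theorem~5.1]{HS2023seq1}, the rest of the proof collapses, as the authors advertise, to bookkeeping through $\st[\varphi]-c\|\varphi\|^2=\|Q\varphi\|^2$ and the equivalence \eqref{prec1} combined with Lemma~\ref{repform0}.
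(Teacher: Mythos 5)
Your proposal is correct and follows essentially the same route the paper takes: translate the monotone sequence of forms into a $\prec_c$-monotone sequence of representing maps via \eqref{prec1}, invoke the operator-level monotone convergence result \cite[Theorem~5.1]{HS2023seq1} recalled in \eqref{Grijpq}, define $\st$ by \eqref{seqq} from the limit operator $Q$, and transfer closability/closedness back to $\st$ via Lemma~\ref{repform0}. The paper presents the lemma as a ``straightforward'' consequence of exactly this setup, without a separate proof, so your expansion matches its intent. Two small remarks: the uniqueness of $\st$ is even simpler than you suggest (given the prescribed domain, $\st[\varphi]=\lim_n\st_n[\varphi]$ is forced, and polarization determines $\st$), and the universality clause also has a one-line direct proof without representing maps ($\st_n\leq\su$ gives $\sup_n\st_n[\varphi]\leq\su[\varphi]<\infty$ for $\varphi\in\dom\su$, hence $\varphi\in\dom\st$ and $\st[\varphi]\leq\su[\varphi]$), though your operator-level version is equally valid.
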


In the limit procedure of Lemma \ref{garijp1} the properties of
being closable and closed are preserved, respectively. This observation
will be used in the following discussion of the regular parts  of $\st_n$
and  $ \st$ in $\bF(\sH)$.
It follows from \eqref{seq} and \eqref{seqq} that $\sr_n=\st_{n, {\rm reg}}$
and $\sr=\st_{{\rm reg}}$ have the representations
 \begin{equation}\label{seqr}
 \sr_n[\varphi, \psi]=c+(Q_{n, \rm reg} \varphi, Q_{n, \rm reg} \psi),
\quad \varphi, \psi \in \dom \sr_n=\dom \st_{n}=\dom Q_n,
\end{equation}
and
\begin{equation}\label{seqqr}
 \sr[\varphi, \psi]=c+(Q_{\rm reg} \varphi, Q_{\rm reg} \psi),
\quad \varphi, \psi \in \dom \sr=\dom \st =\dom Q.
\end{equation}
The assumption  $Q_m \prec_c Q_n$ in \eqref{ggrijpa} implies that
\begin{equation}\label{aap0}
Q_{m, {\rm reg}} \prec_c Q_{n, {\rm reg}}, \quad m \leq n,
\end{equation}
and, likewise, the inequality $Q_n \prec_c Q$ in \eqref{Grijpq} gives
\begin{equation}\label{aapp0}
Q_{n, {\rm reg}} \prec_c Q_{\rm reg};
\end{equation}
cf. \eqref{presreg}.
In particular, $Q_{\rm reg}$ is an upper bound for $Q_{n, {\rm reg}}$.
By \eqref{aap0} and Lemma \ref{garijp1}
it follows from the closability of the operators $Q_{n, {\rm reg}}$ that there exists
a closable operator $Q_{\rm r} \in \bL(\sH, \sK')$ such that its domain
is given by
  \begin{equation}\label{grijpbs0}
  \dom Q_{\rm r}=\left\{\varphi \in
  \bigcap_{n \in \dN} \dom Q_{n}  :\,
  \sup_{n \in \dN} \|Q_{n, {\rm reg}} \varphi]\|< \infty \right\}
\end{equation}
and which satisfies
\begin{equation}\label{Grijpqs}
\left\{
\begin{array}{l} Q_{n, {\rm reg}} \prec_c Q_{\rm r} \prec Q_{\rm reg}, \\
 \|Q_{n, {\rm reg}} \varphi\| \nearrow \|Q_{\rm r} \varphi \|,
 \quad \varphi \in \dom Q_{\rm r}.
\end{array}
\right.
\end{equation}
The closable operator $Q_{\rm r}$ serves as a representing map
for the closable semibounded form $\st_{\rm r} \in \bF(\sH)$ defined by
\begin{equation}\label{aap}
\st_{\rm r} [\varphi, \psi]=c(\varphi, \psi)+(Q_{\rm r} \varphi, Q_{\rm r} \psi),
\quad \varphi, \psi \in \dom Q_{\rm r}.
\end{equation}

\medskip

Since the regular parts $Q_{n, {\rm reg}}$ and $Q_{\rm r}$ are closable,
the semibounded forms $\sr_{n}$ and $\st_{\rm r}$ are closable, and one
obtains from \eqref{seqr} that
\begin{equation}\label{seqa}
 \bar \sr_n[\varphi, \psi]
  =c+((Q_{n, \rm reg})^{**} \varphi, (Q_{n, \rm reg})^{**} \psi)
\end{equation}
for all $\varphi, \psi \in \dom \bar \sr_n =\dom (Q_{n, \rm reg})^{**}$
and from \eqref{aap} that
\begin{equation}\label{seqqreg}
 (\clos \st_{{\rm r}}) \,[\varphi, \psi]
 =c+((Q_{\rm r})^{**} \varphi, (Q_{\rm r})^{**} \psi)
\end{equation}
for all $\varphi, \psi \in \dom (\clos \st_{\rm r})=\dom (Q_{\rm r})^{**}$.
It follows from the inequalities \eqref{aap0} that
\begin{equation}\label{aap1}
(Q_{m, {\rm reg}})^{**} \prec_c (Q_{n, {\rm reg}})^{**}, \quad m \leq n,
\end{equation}
and from \eqref{aapp0} that
\begin{equation}\label{aapp1}
(Q_{n, {\rm reg}})^{**} \prec_c (Q_{{\rm r}})^{**}.
\end{equation}
By \eqref{aap1} and Lemma \ref{garijp1}
it follows  that there exists a closed operator $S_{\rm r}\in \bL(\sH, \sK'')$
such that its domain is given by
  \begin{equation}\label{grijpbs}
  \dom S_{\rm r}=\left\{\varphi \in
  \bigcap_{n \in \dN} \dom (Q_{n, {\rm reg}})^{**} :\,
  \sup_{n \in \dN} \|(Q_{n, {\rm reg}})^{**} \varphi]\|< \infty \right\},
\end{equation}
and which satisfies
\begin{equation}\label{Grijpqss}
\left\{
\begin{array}{l}
 (Q_{n, {\rm reg}})^{**} \prec_c S_{\rm r} \prec_c (Q_{\rm reg})^{**}, \\
\|(Q_{n, {\rm reg}})^{**} \varphi\| \nearrow  \|S_{\rm r} \varphi \|,
\quad \varphi \in \dom S_{\rm r}.
\end{array}
\right.
\end{equation}
The closed operator  $S_{\rm r}$ serves
as a representing map for the closed semibounded form
$\ss_{\rm r} \in \bF(\sH)$ defined by
\begin{equation}\label{seqclor}
\ss_{\rm r}[\varphi, \psi]=c+(S_{\rm r} \varphi, S_{\rm r} \psi),
\quad \varphi, \psi \in \dom \ss_{\rm r}=\dom S_{\rm r}.
\end{equation}
It is clear from the above that  $S_{\rm r}  \prec (Q_{\rm r})^{**}$.

\medskip

The above facts together with Lemma \ref{garijp1} now lead to the following theorem,
that ultimately describes the limit behaviour of the semibounded selfadjoint
relations corresponding to $\st_n$ as described in Theorems~\ref{thm1},~\ref{terelst}.

\begin{theorem}
Let  $\st_{n} \in \bF(\sH)$  be a sequence of semibounded forms,
represented in \eqref{seq}, which satisfies   \eqref{grijpq}.
Let the semibounded form  $\st \in \bF(\sH)$,
 represented in \eqref{seqq}, be the limit of $\st_n$.
Then the following statements hold:
\begin{enumerate} \def\labelenumi{\rm(\Alph{enumi})}
\item The regular parts $\sr_n  \in \bF(\sH)$, represented in \eqref{seqr}, satisfy
\begin{equation}\label{grijpa1}
 \sr_m \leq  \sr_n, \quad m \leq n,
 \quad \mbox{and} \quad \sr_n \leq   \st_{{\rm reg}},
 \end{equation}
where $\st_{\rm reg}$ is represented in \eqref{seqqr}.
Moreover, there is a closable semibounded form $\st_{\rm r} \in \bF(\sH)$,
represented in \eqref{aap}, such that
\begin{equation}\label{grijpb1}
 \dom \st_{\rm r}
 =\left\{\varphi \in \bigcap_{n \in \dN} \dom \st_n :\,
   \sup_{n \in \dN}  \sr_n [\varphi] < \infty \right\},
\end{equation}
 and which satisfies
\begin{equation}\label{gggrijpc1}
  \sr_n \leq \st_{\rm r}  \leq \st_{\rm reg}
  \quad \mbox{and} \quad
  \sr_n [\varphi] \nearrow \st_{\rm r}[\varphi], \quad
 \varphi \in \dom \st_{\rm r}.
 \end{equation}

\item
The closures $ \bar \sr_n \in \bF(\sH)$ of the regular parts $\sr_n$,
represented in \eqref{seqa}, satisfy
\begin{equation}\label{ggrijpa1}
 \bar \sr_m \leq  \bar \sr_n   \quad m \leq n,
  \quad \mbox{and} \quad  \bar \sr_n \leq \clos \,\st_{\rm r},
 \end{equation}
where  $\clos\, \st_{\rm r}$ is represented in \eqref{seqqreg}.
Moreover, there exists a closed semibounded form $\ss_{\rm r} \in \bF(\sH)$,
represented in \eqref{seqclor},
such that
\begin{equation}\label{ggrijpb1}
  \dom \ss_{\rm r}=\left\{\varphi \in \bigcap_{n \in \dN} \dom \bar \sr_n
  :\,
  \sup_{n \in \dN} \,   \bar \sr_n[\varphi] < \infty \right\},
\end{equation}
and which satisfies
\begin{equation}\label{ggrijpcc1}
 \bar \sr_n \leq \ss_{\rm r} \leq \clos \,\st_{\rm r} \quad \mbox{and} \quad
  \bar \sr_n [\varphi] \nearrow \ss_{\rm r}[\varphi], \quad  \varphi \in \dom \ss_{\rm r}.
  \end{equation}
In fact, $ \dom (\clos \st_{\rm r}) \subset \dom \ss_{\rm r} $ and
 \begin{equation}\label{grijpdd2}
  (\clos \st_{\rm r} )\,[\varphi, \psi]=\ss_{\rm r}[\varphi, \psi],
 \quad \varphi, \psi \in \dom (\clos \,\st_{\rm r}).
\end{equation}

\item
The semibounded selfadjoint relations $\wt A_{\st_n} \in \bL(\sH)$
corresponding to the semibounded forms $\st_n \in \bF(\sH)$
converge to the semibounded selfadjoint relation $A_\ss \in \bL(\sH)$
corresponding to the closed semibounded form $\ss \in \bF(\sH)$:
\begin{equation}\label{grijp111}
 \wt A_{\st_n} \to A_\ss
\end{equation}
in the strong resolvent sense or, equivalently,  the strong graph sense in $\sH$.
\end{enumerate}
\end{theorem}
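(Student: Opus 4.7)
The plan is to reduce every assertion to a statement about representing maps and then invoke the monotone convergence for sequences of operators from \cite{HS2023seq1} (which already underlies Lemma \ref{garijp1}). The forms in the conclusion are then recovered from the limit operators by the representation identities \eqref{aap} and \eqref{seqclor}, and their closability or closedness is read off from Lemma \ref{repform0}. Throughout, the translation $\st_1\leq\st_2 \Leftrightarrow Q_1\prec_c Q_2$ of \eqref{prec1} converts every form inequality in the theorem into a corresponding contractive-domination statement between representing maps.

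For part (A), start from $Q_m \prec_c Q_n \prec_c Q$ (equivalent to the hypothesis and to \eqref{Grijpq}). The preservation property \eqref{presreg} of the regular part under $\prec_c$ yields $Q_{m,\rm reg} \prec_c Q_{n,\rm reg} \prec_c Q_{\rm reg}$. Applying the operator-level monotone convergence result (\cite[Theorem 5.1]{HS2023seq1}, which is precisely what Lemma~\ref{garijp1} is based on) to the nondecreasing sequence of closable operators $(Q_{n,\rm reg})$ produces $Q_{\rm r}$ with the domain \eqref{grijpbs0} and the convergence \eqref{Grijpqs}, and the form $\st_{\rm r}$ defined by \eqref{aap} then fulfils \eqref{grijpa1}--\eqref{gggrijpc1} directly. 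For part (B), repeat the procedure one level higher with the closed operators $(Q_{n,\rm reg})^{**}$: \eqref{aap1} and \eqref{aapp1} follow from \eqref{aap0}, \eqref{aapp0} and the preservation of $\prec_c$ under closure, and a second application of \cite[Theorem 5.1]{HS2023seq1} yields the closed $S_{\rm r}$ of \eqref{grijpbs}, \eqref{Grijpqss}, so that the form $\ss_{\rm r}$ defined by \eqref{seqclor} is closed and automatically satisfies \eqref{ggrijpa1}--\eqref{ggrijpcc1}. The identity \eqref{grijpdd2} is obtained in two steps: for $\varphi \in \dom Q_{\rm r}$ one has $(Q_{n,\rm reg})^{**}\varphi = Q_{n,\rm reg}\varphi$ uniformly bounded by \eqref{Grijpqs}, so $\varphi \in \dom S_{\rm r}$ with $\|S_{\rm r}\varphi\| = \|Q_{\rm r}\varphi\|$ by \eqref{Grijpqss}, showing $\st_{\rm r}\subset \ss_{\rm r}$; since $\ss_{\rm r}$ is closed it then extends $\clos\,\st_{\rm r}$, and \eqref{grijpdd2} follows.

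For part (C), Theorem \ref{terelst} gives $\wt A_{\st_n} = A_{\bar \sr_n}$, and Corollary \ref{friedr} identifies the selfadjoint relation of the closed form $\ss_{\rm r}$ as $A_{\ss_{\rm r}} = S_{\rm r}^*S_{\rm r} + c$ (this being the $A_{\ss}$ of the statement). The strong resolvent, equivalently strong graph, convergence $A_{\bar \sr_n} \to A_{\ss_{\rm r}}$ is then the classical Kato-type monotone convergence for closed semibounded forms; in the present framework it is already encoded in the convergence $(Q_{n,\rm reg})^{**} \nearrow S_{\rm r}$ established in (B), via the spectral identifications $A_{\bar \sr_n} = (Q_{n,\rm reg})^*(Q_{n,\rm reg})^{**} + c$ and $A_{\ss_{\rm r}} = S_{\rm r}^* S_{\rm r} + c$. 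The main obstacle is precisely the verification of \eqref{grijpdd2}: the limit $S_{\rm r}$ from the second monotone construction is not a priori the closure $(Q_{\rm r})^{**}$, only a restriction of it, and one must check that it is large enough to recover $\clos\,\st_{\rm r}$ on its graph; this is where the interchange of closure and monotone limit, formalized in \cite{HS2023seq1}, is used in an essential way, and it is the step around which the whole passage from (A) and (B) to the form-convergence statement (C) pivots.
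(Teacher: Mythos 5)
Your proof is correct and follows essentially the same route as the paper: in parts (A) and (B) you apply the operator-level monotone convergence theorem to the sequences $(Q_{n,\rm reg})$ and $((Q_{n,\rm reg})^{**})$ respectively, verify \eqref{grijpdd2} via $\st_{\rm r}\subset\ss_{\rm r}$ together with closedness of $\ss_{\rm r}$, and in (C) identify $\wt A_{\st_n}=A_{\bar\sr_n}$ and $A_{\ss_{\rm r}}=S_{\rm r}^*S_{\rm r}+c$ and invoke the standard monotone convergence principle for closed semibounded forms. (Only a cosmetic quibble: in your closing remark you have the direction reversed — $S_{\rm r}\prec_c(Q_{\rm r})^{**}$ gives $\dom(Q_{\rm r})^{**}\subset\dom S_{\rm r}$, so it is $(Q_{\rm r})^{**}$ that sits inside $S_{\rm r}$ at the form level, not vice versa — but this does not affect the argument.)
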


\begin{proof}
Due to the assumption \eqref{grijpq} it follows for the corresponding
representing maps that $Q_m \prec_c Q_n$, $m \leq n$, and that $Q_n \prec_c Q$.

\medskip

(A) The inequalities in \eqref{grijpa1} follow from \eqref{aap0} and \eqref{aapp0}.
The statements about $\st_{\rm r}$ in \eqref{grijpb1} and \eqref{gggrijpc1}
follow from \eqref{grijpbs0} and \eqref{Grijpqs}.

\medskip

(B) The inequalities in \eqref{ggrijpa1} follow from \eqref{aap1} and \eqref{aapp1}.
The statements about $\ss_{\rm r}$ in \eqref{ggrijpb1} and \eqref{ggrijpcc1}
follow from \eqref{grijpbs} and \eqref{Grijpqss}.
The equality \eqref{grijpdd2} holds by polarization, after observing that $\st_{\rm r}[\varphi]$
is the limit of $\sr_n [\varphi]$ for $\varphi \in \dom \st_{\rm r}$ while $\ss_{\rm r}[\varphi]$
is the limit of $\bar \sr_n [\varphi]$ for $\varphi \in \dom \ss_{\rm r}$,
see \eqref{gggrijpc1} and \eqref{ggrijpcc1}. This equality is then preserves also for
the closure $\clos \st_{\rm r}$, since $\st_{\rm r}\subset \ss_{\rm r}$ and $\ss_{\rm r}$ is closed.

\medskip

(C) The semibounded selfadjoint relation $\wt A_n \in \bL(\sH)$
corresponding to $\st_n \in \bF(\sH)$ via Theorem~\ref{thm1} is given by
 \[
 \wt A_{n}=c+(Q_n)^* (Q_n)^{**}
 =c+(Q_{n, \rm reg})^*(Q_{n, \rm reg})^{**},
\]
where the second equality follows again from 
$(Q_n)^* (Q_n)^{**}=(Q_{n, \rm reg})^*(Q_{n, \rm reg})^{**}$, cf. \cite[Appendix]{HS2023seq1}.
Hence, by the classical representation theorem for closed forms, $\wt A_{n}$   
is the unique semibounded selfadjoint relation corresponding to the closed form $\bar \sr_n$. 
The sequence $(Q_{n, \rm reg})^{**}$ satisfies \eqref{Grijpqss} while \eqref{ggrijpcc1} 
shows that the sequence $\bar \sr_n$ has the closed limit form $\ss_{\rm r}$ with 
the closed representing operator $S_{\rm r}$ in \eqref{seqclor}. 
One concludes that $A_\ss=(S_{\rm r})^*S_{\rm r}$ and now the strong resolvent or, equivalently, 
the strong graph convergence in \eqref{grijp111} follows from the standard monotonicity principle
for closed forms; cf. e.g. \cite[Theorem~5.2.15]{BHS}. 
\end{proof}

Finally let $\st_n \in \bF(\sH)$, $n \in \dN$, be a sequence
of semibounded forms which is  nonincreasing:
\begin{equation}\label{grijpaa}
 c < \st_n \leq \st_m, \quad m \leq n,
\end{equation}
in the sense of \eqref{ineq0}. The assumption of the common
lower bound $c \in \dR$ guarantees the existence of a limit.
Due to \eqref{grijpaa} the lower bounds satisfy
\[
c \leq m(\st_n) \leq m(\st_m),  \quad m \leq n.
\]
The following result is straightforward,
see \cite[Theorem 10.3]{HS2023seq1}, \cite{S3}, \cite{RS1}.

\begin{lemma}
Let $\st_n \in \bF(\sH)$  be a sequence of semibounded forms
which satisfies \eqref{grijpaa}.
Then  there exists a semibounded form $\st \in \bF(\sH)$
such that
\begin{equation}\label{grijpcc}
  \dom \st=\bigcup_{n \in \dN} \dom \st_n,
\end{equation}
 and which satisfies
 \begin{equation}\label{grijpcq}
  c \leq
 \st \leq \st_n  \quad \mbox{and} \quad
 \st_n[\varphi] \searrow \st[\varphi],
 \quad  \varphi \in \dom \st.
 \end{equation}
\end{lemma}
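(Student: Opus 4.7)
The plan is to construct $\st$ directly as a pointwise monotone limit of the forms $\st_n$ on the union of their domains, and then to read off the stated properties. No appeal to representing maps is really needed, though one is available as a shortcut.

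First I would unfold the definition \eqref{ineq0}: the hypothesis $\st_n\leq \st_m$ for $m\leq n$ says exactly that $\dom \st_m\subset \dom \st_n$ together with the pointwise bound $\st_n[\varphi]\leq \st_m[\varphi]$ on the smaller domain. Thus the domains form a nondecreasing nested chain $\dom \st_1\subset\dom\st_2\subset\cdots$, so $\sD:=\bigcup_{n\in\dN}\dom \st_n$ is automatically a linear subspace of $\sH$, which will serve as $\dom \st$ in \eqref{grijpcc}.

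Next I would fix $\varphi\in \sD$ and choose $N$ with $\varphi\in\dom \st_N$; then $\varphi\in\dom\st_n$ for all $n\geq N$. The scalar sequence $(\st_n[\varphi])_{n\geq N}$ is nonincreasing (apply $\st_{n+1}\leq \st_n$ at $\varphi\in\dom \st_n$) and bounded below by $c\|\varphi\|^2$ from the common lower bound $c<\st_n$, hence convergent. Define
\[
\st[\varphi]:=\lim_{n\to\infty}\st_n[\varphi],\qquad \varphi\in\sD,
\]
and extend to pairs by polarization: the identity
$\st_n[\varphi,\psi]=\frac{1}{4}\sum_{k=0}^{3}i^k\,\st_n[\varphi+i^k\psi]$,
applied to $\varphi,\psi\in\sD$, shows that $\st_n[\varphi,\psi]$ converges to a limit $\st[\varphi,\psi]$ that is sesquilinear in $(\varphi,\psi)$. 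This identifies $\st$ as an element of $\bF(\sH)$ with $\dom \st=\sD$.

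The three assertions in \eqref{grijpcq} then fall out of the construction: passing to the limit in $c\|\varphi\|^2\leq \st_n[\varphi]$ gives $c\leq \st$; for $\varphi\in\dom \st_n$ the nonincreasing sequence $(\st_m[\varphi])_{m\geq n}$ stays below its first term, so $\st[\varphi]\leq \st_n[\varphi]$, i.e.\ $\st\leq \st_n$; and the monotone convergence $\st_n[\varphi]\searrow\st[\varphi]$ on $\dom \st$ is precisely how $\st$ was defined. As an alternative route one could translate the hypothesis through \eqref{prec1}, obtaining a nonincreasing chain $Q_{n,c}\prec_c Q_{m,c}$ ($m\leq n$) of representing maps for $\st_n-c$, and then invoke the operator side result \cite[Theorem 10.3]{HS2023seq1} to produce a limit operator representing $\st-c$. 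I do not anticipate a real obstacle; the only subtle point worth pointing out is that the common strict lower bound $c<\st_n$ is used essentially here, since without a uniform pointwise lower bound the numerical monotone limits would not exist and the lemma would fail.
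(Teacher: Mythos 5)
Your argument is correct, and it is a self-contained elementary proof by pointwise monotone convergence plus polarization. For this particular lemma the paper does not actually supply a proof: it only cites \cite[Theorem 10.3]{HS2023seq1}, \cite{S3}, \cite{RS1}. The direct route you take (unfolding \eqref{ineq0} to get the nested, nondecreasing domain chain, then taking the nonincreasing, bounded-below scalar limits $\st_n[\varphi]\searrow\st[\varphi]$ on $\sD=\bigcup_n\dom\st_n$ and recovering off-diagonal values by polarization of the automatically symmetric semibounded forms) is essentially the classical argument found in \cite{S3} and \cite{RS1}. The alternative you sketch at the end — passing via \eqref{prec1} to a nonincreasing chain $Q_{n,c}\prec_c Q_{m,c}$ of representing maps and invoking \cite[Theorem 10.3]{HS2023seq1} — is exactly the route suggested by the paper's overall framework and by its citation. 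The representing-map route has the advantage of being uniform with the rest of the paper and of feeding directly into the subsequent theorem on resolvent convergence of $A_{\st_n}$, whereas your direct argument is shorter and needs no machinery; both yield the same form $\st$, and you have correctly identified the essential role of the common lower bound $c$ in guaranteeing convergence of the scalar sequences.
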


In the case of nonincreasing sequences the notions of closability
or closedness are in general not preserved; see \cite[Example 10.5]{HS2023seq1}, \cite{RS1}.
There is a useful  result for nonincreasing sequences of closed forms
which goes back to \cite{S3}; see also \cite{RS1}. The following
result is included for completeness: it is the analog of 
\cite[Theorem 10.4]{HS2023seq1},  
when adapted to the setting of nonincreasing sequences of forms.

\begin{theorem}
Let $\st_n \in \bF(\sH)$, represented as in \eqref{seq},
 be a sequence of closed semibounded forms
such that \eqref{grijpaa} holds.
Let the semibounded form $\st \in \bF(\sH)$,
represented as in \eqref{seqq}, be the limit of $\st_n$ as
in \eqref{grijpcc} and \eqref{grijpcq}.
Then the semibounded selfadjoint relations $A_{\st_n} \in \bL(\sH)$
corresponding to $\st_n \in \bF(\sH)$
converge to a semibounded selfadjoint relation $A_\infty \in \bL(\sH)$:
\[
 A_{\st_n} \to A_\infty,
\]
in the strong resolvent sense or, equivalently,
in the strong graph sense in $\sH$.
Let $\st_\infty \in \bF(\sH)$ be the closed semibounded
form corresponding to $A_\infty \in \bL(\sH)$.
Then the semibounded forms $\st$ and $\st_\infty$ are connected by
\[
 \clos \st_{\rm reg}=\st_\infty.
\]
Moreover, for the semibounded form $\st \in \bF(\sH)$ one has
\begin{enumerate} \def\labelenumi{\rm(\alph{enumi})}
\item $\st$ is closable if and only if $\st \subset \st_\infty$;
\item $\st$ is closed if and only if $\st=\st_\infty$;
\item $\st$ is singular if and only if $A_\infty-c$ is singular.
\end{enumerate}
\end{theorem}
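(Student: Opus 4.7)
The plan is to combine the monotone convergence theorem for nonincreasing sequences of semibounded selfadjoint relations with the identification $\clos \st_{\rm reg}=\st_\infty$; the three items (a)--(c) will then follow essentially by inspection.

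First I would apply \cite[Theorem 10.4]{HS2023seq1} to the sequence $A_{\st_n}$. Since the closed forms $\st_n$ are nonincreasing with common lower bound $c$, the associated selfadjoint relations $A_{\st_n}\in\bL(\sH)$ form a nonincreasing sequence still bounded below by $c$, so there is a unique semibounded selfadjoint relation $A_\infty\geq c$ with $A_{\st_n}\to A_\infty$ in the strong resolvent, equivalently strong graph, sense. This $A_\infty$ is the largest semibounded selfadjoint relation satisfying $A_\infty\leq A_{\st_n}$ for every $n$ (a direct consequence of monotone strong convergence of the resolvents), and the associated closed form $\st_\infty$, produced via Corollary \ref{friedr} and Lemma \ref{converse}, then satisfies $\st_\infty\leq\st_n$ for all $n$.

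The heart of the argument is the identification $\clos \st_{\rm reg}=\st_\infty$, which I would establish by two inequalities. For $\clos \st_{\rm reg}\leq \st_\infty$: by Theorem \ref{tleb1} one has $\st_{\rm reg}\leq \st$, and the preceding lemma for nonincreasing sequences gives $\st\leq \st_n$; passing to closures yields $\clos \st_{\rm reg}\leq \st_n$ for every $n$, and via Corollary~\ref{friedr} this translates to $A_{\clos \st_{\rm reg}}\leq A_{\st_n}$. The maximality of $A_\infty$ among such lower bounds then forces $A_{\clos \st_{\rm reg}}\leq A_\infty$, i.e.\ $\clos \st_{\rm reg}\leq \st_\infty$. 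For the reverse $\st_\infty\leq \clos \st_{\rm reg}$: the inequality $\st_\infty\leq \st_n$ means $\dom \st_n\subset \dom \st_\infty$ and $\st_\infty[\varphi]\leq \st_n[\varphi]$ on $\dom \st_n$; combining this with $\dom\st=\bigcup_n\dom\st_n$ and $\st[\varphi]=\lim_n\st_n[\varphi]$ produces $\dom\st\subset \dom\st_\infty$ together with $\st_\infty[\varphi]\leq\st[\varphi]$ for $\varphi\in\dom\st$. The restriction $\st_\infty\uphar \dom \st$ is closable (being a restriction of the closed form $\st_\infty$) and bounded by $\st$, so Theorem~\ref{tleb1} yields $\st_\infty[\varphi]\leq \st_{\rm reg}[\varphi]$ on $\dom \st$. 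Finally, for $\varphi\in\dom \clos \st_{\rm reg}$ and any $\st_{\rm reg}$-approximation $\varphi_n\in\dom \st$, the bound $\st_\infty[\varphi_n-\varphi_m]\leq \st_{\rm reg}[\varphi_n-\varphi_m]\to 0$ together with closedness of $\st_\infty$ forces $\varphi\in\dom \st_\infty$ and $\st_\infty[\varphi]\leq \clos \st_{\rm reg}[\varphi]$, completing the equality.

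With $\clos \st_{\rm reg}=\st_\infty$ in hand the three equivalences read off quickly. For (a), closability of $\st$ is equivalent to $\st=\st_{\rm reg}$, whence $\st\subset \clos \st_{\rm reg}=\st_\infty$; conversely any restriction of a closed form is closable. Statement (b) follows since closedness of $\st$ forces $\st=\st_{\rm reg}=\clos \st_{\rm reg}=\st_\infty$, while $\st=\st_\infty$ inherits closedness directly. For (c), Lemma~\ref{repform01} identifies singularity of the nonnegative form $\st-c$ with singularity of its representing map $Q_c$, i.e.\ with $Q_{c,{\rm reg}}=0$; by Theorem~\ref{tleb} this is equivalent to $\st_{\rm reg}[\cdot,\cdot]=c(\cdot,\cdot)$ on $\dom \st$, and via the identity $A_\infty=A_{\clos\st_{\rm reg}}$ to the triviality of the operator part of $A_\infty-c$---exactly the statement that $A_\infty-c$ is singular. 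The main obstacle is the concluding step of the identification: upgrading the pointwise inequality $\st_\infty\leq \st_{\rm reg}$ on $\dom \st$ to all of $\dom \clos \st_{\rm reg}$, which forces one to simultaneously track $\st_{\rm reg}$- and $\st_\infty$-convergence along approximating sequences.
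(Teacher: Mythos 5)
The paper does not supply a proof for this theorem; it merely states that the result is the nonincreasing analog of \cite[Theorem 10.4]{HS2023seq1} and refers the reader there. So your argument cannot be compared to a proof in the text; judged on its own, it is correct and fits the framework the paper has set up. The route you take --- monotone resolvent convergence to obtain $A_\infty$ as the infimum of the $A_{\st_n}$, squeezing $\clos\st_{\rm reg}$ against $\st_\infty$ from both sides, and then reading off (a)--(c) --- is exactly what the paper's machinery suggests.

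A few small observations on the argument. In establishing $\st_\infty\leq\clos\st_{\rm reg}$, restricting $\st_\infty$ to $\dom\st$ is an unnecessary detour: since you have already shown $\dom\st\subset\dom\st_\infty$ and $\st_\infty[\varphi]\leq\st[\varphi]$ on $\dom\st$, the inequality $\st_\infty\leq\st$ holds in the sense of \eqref{ineq0} with $\su=\st_\infty$ directly, and $\st_\infty$ being closed (hence closable and equal to its own regular part) lets Theorem \ref{tleb1} give $\st_\infty\leq\st_{\rm reg}$ immediately. The subsequent upgrading to all of $\dom(\clos\st_{\rm reg})$ is the genuinely delicate point, and your Cauchy-sequence argument handles it correctly: the contractive domination $\st_\infty-c\leq\st_{\rm reg}-c$ pushes $\st_{\rm reg}$-Cauchy sequences into $\st_\infty$-Cauchy sequences, and closedness of $\st_\infty$ finishes. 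For (c), your chain of equivalences --- $\st-c$ singular $\iff Q_c$ singular $\iff Q_{c,\rm reg}=0$ $\iff\st_{\rm reg}$ is the form $c(\cdot,\cdot)$ on $\dom\st$ $\iff (A_\infty-c)_{\rm reg}=0$ --- is correct, though it is worth stating explicitly that "singular selfadjoint relation" here means that the orthogonal operator part vanishes, which is the same as $\dom(A_\infty-c)\subset\ker(A_\infty-c)$ since $A_\infty$ is closed. Overall this is a sound and complete proof.
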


%


\begin{thebibliography}{33}

\bibitem{An}
T.~Ando,
``Lebesgue-type decomposition of positive operators'',
Acta Sci. Math. (Szeged), 38 (1976), 253--260.

\bibitem{AN}
T.~Ando and K.~Nishio,
''Positive selfadjoint extensions of positive symmetric operators'',
Toh\'oku Math. J., 22 (1970), 65--75.

\bibitem{AtE1}
W. Arendt and T. ter Elst,
``Sectorial forms and degenerate differential operators'',
J. Operator Theory, 67 (2012), 33--72.

\bibitem{A88}
Yu.M. Arlinski\u{\i},
``Positive spaces of of boundary values and sectorial extensions of nonnegative symmetric operators”,
Ukrainian Math. J., 40 (1988), 8–15.

\bibitem{AHSS}
Yu.M.~Arlinski\u{\i}, S.~Hassi, H.S.V.~de~Snoo, and Z.~Sebesty\'en,
``On the class of extremal extensions of a nonnegative operator'',
Oper. Theory: Adv. Appl. (B.~Sz.-Nagy memorial volume),
127 (2001), 41--81.

\bibitem{BHS}
J.~Behrndt, S.~Hassi, and H.S.V.~de~Snoo,
 \textit{Boundary value problems, Weyl functions, and differential operators},
 Monographs in Mathematics, Vol. 108, Birkh\"auser, 2020.


\bibitem{CS}
E.A.~Coddington and H.S.V.~de~Snoo,
``Positive selfadjoint extensions of positive symmetric subspaces'',
Math. Z., {159} (1978), 203--214.


\bibitem{FW}
P.A. Fillmore and J.P. Williams,
``On operator ranges",
Adv. Math. 7 (1971), 254--281.


\bibitem{HMS}
S.~Hassi, M.M.~Malamud, and H.S.V. de~Snoo,
``On Kre\u{\i}n's extension theory of nonnegative operators'',
Math. Nachr., 274/275 (2004), 40--73.

\bibitem{HSS2009}
S.~Hassi, Z.~Sebesty\'en, and H.S.V. de~Snoo,
``Lebesgue type decompositions for nonnegative forms'',
J. Functional Analysis, 257 (2009), 3858--3894.

\bibitem{HSS2018}
S.~Hassi, Z.~Sebesty\'en, and H.S.V. de~Snoo,
``Lebesgue type decompositions for linear relations and Ando's uniqueness criterion",
Acta Sci. Math. (Szeged), 84 (2018), 465--507.


\bibitem{HS2023a}
S.~Hassi  and H.S.V. de~Snoo,
``Complementation and Lebesgue type sum decompositions of linear operators and relations",
submitted for publication.

\bibitem{HS2023seq1}
S.~Hassi  and H.S.V. de~Snoo,
``Sequences of operators, monotone in the sense of contractive domination",
submitted for publication.



 \bibitem{Kato}
T.~Kato, \textit{Perturbation theory for linear operators},
Springer-Verlag, Berlin, 1980.

\bibitem{Kosh}
V.D.~Koshmanenko,
\textit{Singular quadratic forms in perturbation theory},
Kluwer Academic Publishers, Dordrecht/Boston/London,
Mathematics and its applications, Vol. 474, 1999.

\bibitem{PS}
V. Prokaj and Z. Sebesty\'en,
``On Friedrichs extensions of operators”,
Acta Sci. Math. (Szeged), 62 (1996), 243–246.

\bibitem{PW1}
W.~Pusz and S.L.~Woronowicz,
"Functional calculus for sesquilinear forms and the purification map",
Rep. Math. Phys. 5 (1975), 159--170.

\bibitem{RS1}
M. Reed and B. Simon.
\textit{Methods of modern physics. I.}
Academic Press, New York, 1980.


\bibitem{SS}
Z.~Sebesty\'en and J.~Stochel,
``Restrictions of positive self-adjoint operators'',
Acta Sci. Math. (Szeged), 55 (1991), 149--154.

\bibitem{ST}
Z.~Sebesty\'en and Z.~Tarcsay,
``Extensions of positive symmetric operators and Krein's uniqueness criteria'',
arXiv, 2022.

\bibitem{S3}
B.~Simon,
``A canonical decomposition for quadratic forms with applications
to monotone convergence theorems'',
J. Functional Analysis, 28 (1978), 377--385.

\bibitem{Szy87}
W. Szyma\'nski,
``Positive forms and dilations",
Trans. Amer. Math. Soc., 301 (1987),  761--780.

\end{thebibliography}
\end{document}